\theoremstyle{definition}
\newtheorem{nul}{}[section]
\newtheorem{dfn}[nul]{Definition}
\newtheorem{rmk}[nul]{Remark}
\newtheorem{cnstr}[nul]{Construction}
\newtheorem{ntn}[nul]{Notation}
\newtheorem{exm}[nul]{Example}
\newtheorem{rec}[nul]{Recollection}
\newtheorem{wrn}[nul]{Warning}
\newtheorem{qst}{Question}
\newtheorem*{dfn*}{Definition}
\newtheorem*{axm*}{Axiom}
\newtheorem*{ntn*}{Notation}
\newtheorem*{exm*}{Example}
\newtheorem*{exr*}{Exercise}
\newtheorem*{int*}{Intuition}
\newtheorem*{qst*}{Question}
\newtheorem*{rmk*}{Remark}
\theoremstyle{plain}
\newtheorem{thm}[nul]{Theorem}
\newtheorem{prop}[nul]{Proposition}
\newtheorem{lem}[nul]{Lemma}
\newtheorem{cor}[nul]{Corollary}
\newtheorem{cnv}[nul]{Convention}
\newtheorem{cnst}[nul]{Construction}
\newtheorem*{thm*}{Theorem}
\newtheorem*{prop*}{Proposition}
\newtheorem*{cor*}{Corollary}
\newtheorem*{lem*}{Lemma}
\newtheorem*{cnj*}{Conjecture}
\theoremstyle{definition}
\DeclareMathOperator{\Tr}{\mathrm{Tr}}
\DeclareMathOperator{\im}{\mathrm{im}}
\DeclareMathOperator*{\colim}{\mathrm{colim}}
\DeclareMathOperator{\Hom}{\mathrm{Hom}} 
\DeclareMathOperator{\Sym}{\mathrm{Sym}}
\DeclareMathOperator{\CP}{\mathbb{CP}}
\DeclareMathOperator{\Z}{\mathbb{Z}}
\DeclareMathOperator{\N}{\mathbb{N}}
\DeclareMathOperator{\Q}{\mathbb{Q}}
\DeclareMathOperator{\G}{\mathbb{G}}
\DeclareMathOperator{\F}{\mathbb{F}}
\DeclareMathOperator{\BPn}{BP \langle n \rangle}
\def\spoke{\Yright}
\def\vot{v_1^{\mu_p}}
\def\Bar{\mathrm{Bar}}
\def\H{\mathrm{H}}
\def\Cmu{\mathbb{CP}^{\infty} _{\mu_p}}
\def\Cmun{\mathbb{CP}^{n} _{\mu_p}}
\def\rhobar{\overline{\rho}}
\def\NCp{\mathrm{N}^{C_p} _{e}}
\def\HZ{\mathbb{Z}}
\def\HZp{\mathbb{Z}_{(p)}}
\def\HFp{\mathbb{F}_p}
\def\HZu{\underline{\mathbb{Z}}}
\def\HZpu{\underline{\mathbb{Z}}_{(p)}}
\def\HFu{\underline{\mathbb{F}}_p}
\def\gfp{\Phi^{C_p}}
\def\wt{\widetilde}
\def\Nm{\mathrm{Nm}}
\def\triv{\mathbbm{1}}
\def\tmf{\mathrm{tmf}}
\def\eps{\varepsilon}
\def\GL{\mathrm{GL}}
\def\BP{\mathrm{BP}}
\def\MU{\mathrm{MU}}
\def\End{\mathrm{End}}
\def\SS{\mathbb{S}}
\def\OO{\mathcal{O}}
\def\DD{\mathbb{D}}
\def\Honda{\mathrm{Honda}}
\def\QQ{\mathbb{Q}}
\def\Tmf{\mathrm{Tmf}}
\def\R{\mathbb{R}}
\def\Zlt{\mathbb{Z}_{(3)}}
\DeclarePairedDelimiter\abs{\lvert}{\rvert}%
\let\oldabs\abs
\def\abs{\@ifstar{\oldabs}{\oldabs*}}
    \def\subsection{\@startsection{subsection}{2}%
      \z@{.5\linespacing\@plus.7\linespacing}{.25\linespacing}%
      {\normalfont\bfseries\centering}}
\def\l@subsection{\@tocline{2}{0pt}{2.5pc}{5pc}{}}
\begin{document}

\title{Odd primary analogs of Real orientations}
\author{Jeremy Hahn}
\address{Department of Mathematics, Massachusetts Institute of Technology, Cambridge, MA, USA}
\email{jhahn01@mit.edu}
\author{Andrew Senger}
\address{Department of Mathematics, Harvard University, Cambridge, MA, USA}
\email{senger@math.harvard.edu}
\author{Dylan Wilson}
\address{Department of Mathematics, Harvard University, Cambridge, MA, USA}
\email{dwilson@math.harvard.edu}

\begin{abstract}
We define, in $C_p$-equivariant homotopy theory for $p>2$, a notion of $\mu_p$-orientation analogous to a $C_2$-equivariant Real orientation.  The definition hinges on a $C_p$-space $\mathbb{CP}^{\infty}_{\mu_p}$, which we prove to be homologically even in a sense generalizing recent $C_2$-equivariant work on conjugation spaces.

  We prove that the height $p-1$ Morava $E$-theory is $\mu_p$-oriented and that $\mathrm{tmf}(2)$ is $\mu_3$-oriented.  We explain how a single equivariant map $v_1^{\mu_p}:S^{2\rho} \to \Sigma^{\infty} \mathbb{CP}^{\infty}_{\mu_p}$ completely generates the homotopy of $E_{p-1}$ and $\tmf(2)$, expressing a height-shifting phenomenon pervasive in equivariant chromatic homotopy theory.
\end{abstract}

%----------------------------------------------------------------------%
%----------------------------------------------------------------------%

%\setcounter{tocdepth}{1}
\maketitle

\tableofcontents

\vbadness 5000

%----------------------------------------------------------------------%

\section{Introduction} \label{sec:Introduction}
The complex conjugation action on $\mathbb{CP}^{\infty}$ gives rise to a $C_2$-equivariant space, $\mathbb{CP}^{\infty}_{\mathbb{R}}$, with fixed points $\mathbb{RP}^{\infty}$.
The subspace $\mathbb{CP}^1_{\mathbb{R}}$ is invariant and equivalent as a $C_2$-space
to $S^{\rho}$, the one-point compactification of the real regular representation
of $C_2$.
A $C_2$-equivariant ring spectrum $R$ is \emph{Real oriented} if it is equipped with a map
$$\Sigma^{\infty} \mathbb{CP}^{\infty}_{\mathbb{R}} \to \Sigma^{\rho}R$$
such that the restriction 
$$S^{\rho} = \Sigma^{\infty} \mathbb{CP}^{1}_{\mathbb{R}} \to \Sigma^{\infty} \mathbb{CP}^{\infty}_{\mathbb{R}} \to \Sigma^{\rho}R$$
is the $\Sigma^{\rho}$-suspension of the unit map $S^0 \to R$.  Such a Real orientation induces a homotopy ring map 
$$\mathrm{MU}_{\mathbb{R}} \to R,$$
with domain the spectrum of Real bordism \cite{ArakiI,HuKriz}.  These orientations have proved invaluable to the study of $2$-local chromatic homotopy theory, leading to an explosion of progress surrounding the Hill--Hopkins--Ravenel solution of the Kervaire invariant one Problem \cite{HHR, RealGorenstein, HillMeier, KLWLandweberFlat, EPic, C4slice, InvertibleEmod, RealTate, HurewiczImage, RealOrEn, LTModels, NormF2}.

The above papers solve problems, at the prime $p=2$, that admit clear but often unapproachable analogs for odd primes.  To give two examples, the $3$ primary Kervaire problem remains unresolved \cite{HHRodd}, and substantially less precise information is known about odd primary Hopkins--Miller $EO$-theories \cite[Conjecture 1.12]{HoodBhatt}.

To rectify affairs at $p>2$, the starting point must be to find a $C_p$-equivariant space playing the role of $\mathbb{CP}^{\infty}_{\mathbb{R}}$.  This paper began as an attempt of the first two authors to understand a space proposed by the third.

\begin{cnstr}[Wilson]
For any prime $p$, let $\mathbb{CP}^{\infty}_{\mu_p}$ denote the fiber of the $C_p$-equivariant multiplication map
$$\left(\mathbb{CP}^{\infty}\right)^{\times p} \to \mathbb{CP}^{\infty},$$
where the codomain has trivial $C_p$-action
and the domain has $C_p$-action cyclically permuting the terms.
In other words, a map of spaces $X \to \mathbb{CP}^{\infty}_{\mu_p}$ consists of the data of:
\begin{itemize}
\item A $p$-tuple of complex line bundles $(\mathcal{L}_1,\mathcal{L}_2,\cdots,\mathcal{L}_p)$ on $X$.
\item A trivialization of the tensor product $\mathcal{L}_1 \otimes \mathcal{L}_2 \otimes \cdots \otimes \mathcal{L}_p$.
\end{itemize}
The action on $\Cmu$ is given by
$$(\mathcal{L}_1,\mathcal{L}_2,\cdots,\mathcal{L}_p) \mapsto (\mathcal{L}_p,\mathcal{L}_1,\cdots,\mathcal{L}_{p-1}).$$
\end{cnstr}

\begin{rmk} 
There is an equivalence of $C_2$-spaces
$\mathbb{CP}^{\infty}_{\mu_2} \simeq \mathbb{CP}^{\infty}_{\mathbb{R}}.$
In general, the non-equivariant space underlying $\mathbb{CP}^{\infty}_{\mu_{p}}$ is equivalent to $\left(\mathbb{CP}^{\infty}\right)^{\times p-1}$.  
The fixed points $\left( \mathbb{CP}^{\infty}_{\mu_p} \right)^{C_p}$ are equivalent to the classifying space $BC_p$, as can be seen by applying the fixed points functor $\left(\--\right)^{C_p}$ to the defining fiber sequence for $\Cmu$.
The key point here is that the $C_p$-fixed points of $\left(\mathbb{CP}^{\infty}\right)^{\times p}$ consist of the diagonal copy of $\mathbb{CP}^{\infty}$, and $BC_p$ is the fiber of the $p$th tensor power map $\CP^{\infty} \to \CP^{\infty}$.
\begin{comment}
The fixed points $\left( \mathbb{CP}^{\infty}_{\mu_p} \right)^{C_p}$ are equivalent to the classifying space $BC_p$.
The fixed point calculation is most easily achieved by applying the fixed points functor to the defining fiber sequence for $\Cmu$; since the $C_p$-fixed points of $\left(\mathbb{CP}^{\infty}\right)^{\times p}$ consist of the diagonal copy of $\mathbb{CP}^{\infty}$, it follows that $\left( \Cmu \right)^{C_p}$ is the fiber of the $p$th tensor power map $\CP^{\infty} \to \CP^{\infty}$.
\end{comment}
%
%This can be seen by applying $C_p$-fixed points to the fiber sequence defining $\Cmu$.  The  resulting in a fiber sequence
% \[ \begin{tikzcd}[column sep = 30pt]
%	\left( \Cmu \right)^{C_p} \arrow{r} & \CP^\infty \arrow{r}{\mathcal{L} \mapsto \mathcal{L}^{\otimes p}} & \CP^\infty. \\
%	\end{tikzcd}
%	\]
\end{rmk}

%\[
%\begin{tikzcd}[column sep=huge]
%\left( \mathbb{CP}^{\infty}_{\mu_p} \right)^{C_p} \arrow{r} & \mathbb{CP}^{\infty} \arrow{r}{\mathcal{L} \mapsto \mathcal{L}^{\otimes p}} & \mathbb{CP}^{\infty}.
%\end{tikzcd}
%\]
%\end{rmk}
%\begin{rmk}
%Since $\mathbb{CP}^{\infty}_{\mu_p}$ is by definition the fiber of the $C_p$-equivariant multiplication map
%\[
%\begin{tikzcd}
%\left(\mathbb{CP}^{\infty}\right)^{\times p} \arrow{r} & \mathbb{CP}^{\infty},
%\end{tikzcd}
%\]
%its $C_p$-fixed points may be computed as the fiber of the $p$th power map
%\[
%\begin{tikzcd}[column sep=huge]
%\mathbb{CP}^{\infty} \arrow{r}{\mathcal{L} \mapsto \mathcal{L}^{\otimes p}} & \mathbb{CP}^{\infty}
%\end{tikzcd}
%\]
%As is well-known, this fiber is the classifying space $BC_p$.
%\end{rmk}

To formulate the notion of Real orientation, it is essential to understand the inclusion of the bottom cell
$$S^{\rho} = \mathbb{CP}^{1}_{\mathbb{R}} \to \mathbb{CP}^{\infty}_{\mathbb{R}}.$$
At an arbitrary prime, the analog of this bottom cell is described as follows:

\begin{ntn}
We let $S^{\Yright}$ denote the cofiber of the unique non-trivial map of pointed $C_p$-spaces from $\left(C_{p}\right)_+$ to $S^0$.
This is the \emph{spoke sphere}, and it is a wedge of $(p-1)$ copies of $S^1$
with action on reduced homology given by the augmentation ideal in the group ring
$\mathbb{Z}[C_p]$. 
We denote the suspension $\Sigma S^{\Yright}$ of the spoke sphere by either $S^{1+\Yright}$ or $\mathbb{CP}^{1}_{\mu_p}$, and \Cref{cpmup-as-em-space} provides a natural inclusion
$$S^{1+ \Yright}= \mathbb{CP}^{1}_{\mu_p} \to \mathbb{CP}^{\infty}_{\mu_p}.$$
We will often also use $S^{1+\Yright}$ to denote $\Sigma^{\infty} S^{1+\Yright}$, and $S^{-1-\Yright}$ to denote its Spanier-Whitehead dual.
\end{ntn}

With this bottom cell in hand, we propose the following generalization of Real orientation theory:

\begin{dfn} \label{dfn:mup-orient}
A $\mu_p$-orientation of a $C_p$-equivariant ring $R$ is a map of spectra
$$\Sigma^{\infty} \mathbb{CP}^{\infty}_{\mu_p} \to \Sigma^{1+\Yright} R$$
such that the composite
$$S^{1+\Yright} = \Sigma^{\infty} \mathbb{CP}^{1}_{\mu_p} \to \Sigma^{\infty} \mathbb{CP}^{\infty}_{\mu_p} \to \Sigma^{1+\Yright} R$$
is the $S^{1+\Yright}$-suspension of the unit map $S^0 \to R$.
\end{dfn}

\begin{rmk} Applying the geometric fixed point functor $\Phi^{C_p}$ 
to a $\mu_p$-orientation we learn that the non-equivariant spectrum
$\Phi^{C_p}R$ has $p=0$ in its homotopy
groups.
\end{rmk}

\begin{rmk} \label{cpmup-as-em-space}
Let $\underline{\mathbb{Z}} \coloneq \mathrm{H}\underline{\mathbb{Z}}$ denote the $C_p$-equivariant Eilenberg--MacLane spectrum associated to the constant Mackey functor.  Then there is an equivalence of $C_p$-equivariant spaces
\[\Omega^{\infty} \Sigma^{1+\Yright} \underline{\mathbb{Z}} \simeq \Cmu.\]
Indeed, suspending and rotating the defining cofiber sequence $(C_p)_+ \to S^0 \to S^{\Yright}$ gives rise to a cofiber sequence $S^{1+\spoke} \to (C_p)_+ \otimes S^2 \to S^2$. Tensoring with $\underline{\mathbb{Z}}$ and applying $\Omega^{\infty}$ yields the defining fiber sequence for $\Cmu$.

Under this identification, the natural inclusion $\mathbb{CP}^{1}_{\mu_p} \to \mathbb{CP}^{\infty}_{\mu_p}$ is simply adjoint to the $\Sigma^{1+\Yright}$-suspension of the unit map $S^0 \to \underline{\mathbb{Z}}$.  In particular, the identification $\mathbb{CP}^{\infty}_{\mu_p} \simeq \Omega^{\infty}(\Sigma^{1+\Yright}\underline{\mathbb{Z}})$ gives a
canonical $\mu_p$-orientation of $\underline{\mathbb{Z}}$.
In contrast, Bredon cohomology with coefficients in the Burnside Mackey functor
cannot be $\mu_p$-oriented, since $p$ is nonzero in the
geometric fixed points.
\end{rmk}

In this paper we explore the interaction between $\mu_p$-orientations and chromatic homotopy theory in the simplest possible case: chromatic height $p-1$.  Specifically, we study the following height $p-1$ $\mathbb{E}_\infty$-ring spectra:

\begin{ntn}
  We let $E_{p-1}$ denote the height $(p-1)$ Lubin--Tate theory associated to the Honda formal group law over $\mathbb{F}_{p^{p-1}}$, with $C_p$-action given by a choice of order $p$ element in the Morava stabilizer group.  At $p=3$, we let $\mathrm{tmf}(2)$ denote the $3$-localized connective ring of topological modular forms with full level $2$ structure \cite{Vesnatmf2}.  The ring $\mathrm{tmf}(2)$ naturally admits an action by $\Sigma_3 \cong \mathrm{SL}_2(\mathbb{F}_2)$, and we restrict along an inclusion $C_3 \subset \Sigma_3$ to view $\tmf(2)$ as a $C_3$-equivariant ring spectrum.

The underlying homotopy groups of these spectra are given respectively by
$$\pi^{e}_*(E_{p-1}) \cong \mathbb{W} (\F_{p^{p-1}})\llbracket u_1,u_2,\cdots,u_{p-2} \rrbracket [u^{\pm}], \,\, |u_i|=0, |u|=-2,\, \text{ and}$$
$$\pi^{e}_*(\tmf(2)) \cong \mathbb{Z}_{(3)}[\lambda_1,\lambda_2], \,\, |\lambda_i|=4.$$

We will review the $C_p$-actions on the homotopy groups in \Cref{sec:Hmtpy-Purity}.
\end{ntn}

\begin{thm} 
For all primes $p$, there exists a $\mu_p$-orientation of the $C_p$-equivariant Morava $E$-theory $E_{p-1}$.
\end{thm}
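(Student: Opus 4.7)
The plan is to recast the existence of a $\mu_p$-orientation as a lifting problem for the unit class and resolve it using the homological evenness of $\Cmu$ established earlier in the paper. Unwinding \Cref{dfn:mup-orient}, a $\mu_p$-orientation of $E_{p-1}$ is precisely a class in $E_{p-1}^{1+\Yright}(\Cmu)$ whose restriction along the bottom cell inclusion $S^{1+\Yright} = \mathbb{CP}^{1}_{\mu_p} \hookrightarrow \Cmu$ is the unit in $\pi_0^{C_p}(E_{p-1}) = E_{p-1}^{1+\Yright}(S^{1+\Yright})$. It therefore suffices to prove that the restriction map
$$E_{p-1}^{1+\Yright}(\Cmu) \longrightarrow \pi_0^{C_p}(E_{p-1})$$
is surjective, and in particular hits the unit.

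To do this, I would filter $\Cmu$ by the $C_p$-equivariant cellular filtration afforded by its homological evenness, whose associated graded is a wedge of ``even'' $C_p$-representation spheres generalizing the classical even cell structure on $\mathbb{CP}^{\infty}$. The associated Atiyah--Hirzebruch-type spectral sequence then computes $E_{p-1}^{*}(\Cmu)$ in terms of $\pi_{*}^{C_p}(E_{p-1})$-modules indexed by these even representation-sphere cells, with the unit class appearing on the $E_2$-page as the contribution of the bottom cell.

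The final step is to show that this spectral sequence collapses and that no extension obstructs lifting the unit. Evenness immediately kills the differentials of one parity, and the remaining potential obstructions would live in specific shifts of $\pi_{*}^{C_p}(E_{p-1})$ that one hopes to rule out using the explicit structure of $\pi_{*}^{C_p}(E_{p-1})$ (as reviewed in \Cref{sec:Hmtpy-Purity}) arising from the order-$p$ element of the Morava stabilizer acting on the Honda formal group at height $p-1$. Concretely, one expects $E_{p-1}^{*}(\Cmu)$ to be a free module over $\pi_{*}^{C_p}(E_{p-1})$ on classes built iteratively from powers of (the dual of) the class $v_1^{\mu_p}: S^{2\rho} \to \Sigma^{\infty}\Cmu$ promised in the introduction; surjectivity onto the unit is then automatic from the presence of the bottom cell.

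The main obstacle is exactly this last step: verifying the vanishing of the obstruction groups. This is where chromatic height $p-1$ is indispensable, since it is precisely the height at which the Morava stabilizer group at $p$ contains an element of order $p$ (giving $E_{p-1}$ its $C_p$-action), and at which $\pi_{*}^{C_p}(E_{p-1})$ is simple enough for all relevant groups to be computed and shown to be concentrated where evenness permits. The bulk of the work will be in that computation, which leverages the interplay between the homologically even structure on $\Cmu$ and the formal-group arithmetic of $E_{p-1}$.
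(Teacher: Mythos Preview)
Your proposal is correct and takes essentially the same approach as the paper: use the even slice cell decomposition of $\Cmu$ (from its homological evenness) to reduce the lifting problem to an obstruction theory, then verify the relevant obstruction groups in $\pi_*^{C_p}(E_{p-1})$ vanish using the Hopkins--Miller homotopy fixed point spectral sequence computation. The paper packages exactly those vanishing conditions into the notion of \emph{homotopical evenness} (\Cref{dfn:hmtpy-pure}), proves $E_{p-1}$ is homotopically even (\Cref{thm:E-thy-pure}), and then deduces orientability from \Cref{cor:pure-oriented}; your free-module expectation for $E_{p-1}^{*}(\Cmu)$ is not needed for the existence argument (and is in fact a consequence of the orientation via \Cref{thm:orient-homology}).
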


\begin{thm}
The ($3$-localized) $C_3$-equivariant ring $\mathrm{tmf}(2)$ of topological modular forms with full level $2$ structure admits a $\mu_3$-orientation.
\end{thm}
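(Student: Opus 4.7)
The plan is to produce the $\mu_3$-orientation as a class in $\tmf(2)^{1+\spoke}_{C_3}(\Cmu)$ that restricts, under the bottom cell inclusion $\Cmuo = S^{1+\spoke} \hookrightarrow \Cmu$, to the $\Sigma^{1+\spoke}$-suspension of the unit $1 \in \pi_0^{C_3}(\tmf(2))$. I would construct this class by obstruction theory along an equivariant cellular filtration
$$\Cmuo \subset \mathbb{CP}^{2}_{\mu_3} \subset \cdots \subset \Cmu,$$
whose existence, with successive cofibers of the form $S^{n(1+\spoke)}$, is the main structural ``homological evenness'' theorem for $\Cmu$ advertised in the abstract. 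Because this is built entirely out of the $(1+\spoke)$-skeleton, any convergent computation of $\tmf(2)^{1+\spoke}_{C_3}(\Cmu)$ reduces to understanding the representation-graded homotopy of $\tmf(2)$ in specific ``even'' patterns.

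The extension of the unit map $\Cmuo \to \Sigma^{1+\spoke}\tmf(2)$ up this filtration then proceeds one stage at a time. The obstruction to extending across the $(n+1)$-st cell lies in a representation-graded homotopy group of $\tmf(2)$ in a degree of the form $n(1+\spoke)-1$. The promised ``homotopy purity'' of $\tmf(2)$, the subject of the paper's section referenced in the notation above, should assert that $\pi_\star^{C_3}(\tmf(2))$ is concentrated in representation degrees of the form $m(1+\spoke)$ for $m\in\mathbb{Z}$, so that all these obstructions vanish. A parallel uniqueness analysis controls the indeterminacy at each stage, ensuring the inductive choices remain consistent with the unit on $\Cmuo$.

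The principal obstacle will be the purity computation for $\tmf(2)$ itself. Unlike $E_{p-1}$, where the $C_p$-action is algebraic via the Morava stabilizer group, the $C_3$-action on $\tmf(2)$ is geometric: it arises by restricting the $\Sigma_3 \cong \mathrm{SL}_2(\mathbb{F}_2)$-action that permutes the three non-trivial $2$-torsion points of the universal level-$2$ elliptic curve at $p=3$. To establish purity one must unwind this into an explicit action on $\mathbb{Z}_{(3)}[\lambda_1,\lambda_2]$, compute the resulting representation-graded homotopy groups in sufficiently many degrees (for instance via a homotopy-fixed-point or slice spectral sequence, leveraging the height-$2$ elliptic input), and verify that the ``odd'' representation degrees really do vanish. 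Once this purity is in place, the orientation follows by a clean induction along the filtration.
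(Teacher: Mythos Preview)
Your high-level strategy---filter $\Sigma^{\infty}\Cmu$ with controlled cofibers, then extend the unit over the filtration by an obstruction argument whose obstructions vanish because of an evenness property of $\tmf(2)$---matches the paper's approach exactly. But both of your key technical inputs are misidentified, and as stated the argument would not run.

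The cofibers of the filtration are \emph{not} of the form $S^{n(1+\spoke)}$; indeed this is not a well-defined single cell, since already $S^{\spoke}\otimes S^{\spoke}\simeq S^{\lambda}\oplus (C_3)_+\otimes S^2$ splits. What the homological-evenness theorem for $\Cmu$ actually provides (\Cref{cor:CPmup-cells}) is a filtration whose associated graded is a direct sum of three kinds of \emph{even slice spheres}: induced cells $(C_p)_+\otimes S^{2n}$, regular-representation spheres $S^{2m\rho}$, and spoke-twisted spheres $S^{2m\rho+1+\spoke}$. Only the bottom cell is $S^{1+\spoke}$; the pattern does not propagate as iterated $(1+\spoke)$-suspensions.

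Correspondingly, the evenness condition on $\tmf(2)$ is not concentration in degrees $m(1+\spoke)$. The obstructions to extending over the three cell types force the vanishing of $\pi^{e}_{2n-1}$, of $\pi^{C_p}_{2m\rho-1}$, and of a spoke-shifted group; ``homotopically even'' (\Cref{dfn:hmtpy-pure}) is precisely this package of vanishings. For $\tmf(2)$ the paper establishes this via the slice tower, as you mention as one option: each slice has the form $(\text{dual even slice sphere})\otimes\HZu_{(3)}$ (\Cref{prop:tmf2-slices}), and the required vanishing is then read off from the known $RO(C_3)$-graded homotopy of $\HZu_{(3)}$.
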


Our second main result concerns the fact that, while
$$\pi_* E_{p-1} \cong \mathbb{W}(\F_{p^{p-1}}) \llbracket u_1,u_2,\dots,u_{p-2} \rrbracket [u^{\pm}]$$
has $(p-1)$ distinct named generators, the conglomeration of them is generated under the $\mu_p$-orientation by a \emph{single} equivariant map $v_1^{\mu_p}$.

\begin{cnstr}
In \Cref{sec:vot}, we will construct a map of $C_p$-equivariant spectra 
$$v^{\mu_p}_1:S^{2\rho} \to \Sigma^{\infty} \mathbb{CP}^{\infty}_{\mu_p}.$$
This map should be viewed as canonical only up to some indeterminacy, just as the classical class $v_1$ is only well-defined modulo $p$.  As was pointed out to the authors by Mike Hill, one choice of this map is given by \emph{norming} a non-equivariant class in $\pi_2^{e} \Cmu$.
\end{cnstr}

\begin{cnstr} \label{cnstr:v1mup}
Suppose a $C_p$-equivariant ring $R$ is $\mu_p$-oriented via a map
$$\Sigma^{\infty} \mathbb{CP}^{\infty}_{\mu_p} \to \Sigma^{1+\Yright} R,$$
so that we may consider the composite
$$
\begin{tikzcd}
S^{2\rho} \arrow{r}{\vot} & \Sigma^{\infty} \mathbb{CP}^{\infty}_{\mu_p} \arrow{r} & \Sigma^{1+\Yright} R.
\end{tikzcd}
$$
Using the dualizability of $S^{1+\Yright}$, this composite is equivalent to the data of a map
$$S^{2\rho-1-\Yright} \to R.$$
The non-equivariant spectrum underlying $S^{2\rho-1-\Yright}$ is (non-canonically) equivalent to a direct sum of $p-1$ copies of $S^{2p-2}$.
In particular, by applying $\pi^{e}_{2p-2}$ to the map $S^{2\rho-1-\Yright} \to R,$ one obtains a map from a rank $p-1$ free $\mathbb{Z}_{(p)}$-module to $\pi^{e}_{2p-2} R$.
\end{cnstr}

\begin{dfn}
Given a $C_p$-equivariant ring $R$ with a $\mu_p$-orientation, the \emph{span of $\vot$} will refer to the subset of $\pi^{e}_{2p-2} R$ consisting of the image of the rank $p-1$ free $\mathbb{Z}_{(p)}$-module constructed above.
\end{dfn}

\begin{thm}
For any $\mu_3$-orientation of $\tmf(2)$, the span of $v_1^{\mu_3}$ in $\pi^{e}_4 \tmf(2)$ is all of $\pi^{e}_4 \tmf(2)$.
\end{thm}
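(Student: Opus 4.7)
The plan is to use $C_3$-equivariance to reduce surjectivity of the span map to a single mod-$3$ non-vanishing computation.

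First, the map $\phi\colon \pi^e_4 S^{2\rho-1-\spoke} \to \pi^e_4 \tmf(2)$ giving the span is $C_3$-equivariant, and both its source and target are rank-$2$ free $\mathbb{Z}_{(3)}$-modules. I will identify each, as a $C_3$-module, with the augmentation ideal $I = \ker(\mathbb{Z}_{(3)}[C_3] \to \mathbb{Z}_{(3)})$, i.e., the standard $2$-dimensional representation. For the source, this follows from the defining cofiber sequence $(C_3)_+ \to S^0 \to S^\spoke$ (which gives $\pi^e_1 S^\spoke \cong I$) combined with the triviality of the $C_3$-action on $\pi^e_6 S^{2\rho} \cong \mathbb{Z}_{(3)}$, since $\det(2\rho) = 1$. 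For the target, the inclusion $C_3 \subset \Sigma_3 \cong \mathrm{SL}_2(\mathbb{F}_2)$ cyclically permutes the three non-trivial $2$-torsion points of the universal Legendre-form elliptic curve, hence cyclically permutes the triple $(\lambda_1, \lambda_2, -\lambda_1-\lambda_2)$.

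Next, the $\mathbb{Z}_{(3)}$-algebra $\End_{C_3}(I) \cong \mathbb{Z}_{(3)}[\sigma]/(\sigma^2 + \sigma + 1) \cong \mathbb{Z}_{(3)}[\zeta_3]$ is the ramified quadratic order at $3$, with residue field $\mathbb{F}_3$ and maximal ideal generated by $(\sigma - 1)$. The $\mathbb{Z}_{(3)}$-linear determinant of an equivariant endomorphism $a + b\sigma$ equals its norm $a^2 - ab + b^2 \equiv (a+b)^2 \pmod{3}$, so by Nakayama, $\phi$ is surjective if and only if $a + b \not\equiv 0 \pmod{3}$, equivalently if and only if $\phi \bmod 3$ does not lie in the maximal ideal --- or, said otherwise, if and only if the induced map on mod-$3$ coinvariants $(I/3)_{C_3} \to (I/3)_{C_3}$ (both copies of $\mathbb{F}_3$) is an isomorphism.

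Finally, I propose to verify this non-vanishing by applying the geometric fixed-point functor $\Phi^{C_3}$ to the composite $S^{2\rho - 1 - \spoke} \to \tmf(2)$. Using $(2\rho)^{C_3}$ of dimension $2$ and $(1+\spoke)^{C_3}$ of dimension $1$, one has $\Phi^{C_3} S^{2\rho - 1 - \spoke} \simeq S^1$, yielding a class $\xi \in \pi_1 \Phi^{C_3} \tmf(2)$. Since $\Phi^{C_3}$ sends the action of $\sigma$ to the identity on $\Phi^{C_3}\tmf(2)$, it kills the image of $(\sigma - 1)$ on homotopy, so $\xi \not\equiv 0 \pmod{3}$ already implies the required surjectivity of $\phi$ (and also makes the statement insensitive to the allowed indeterminacies in the choice of $\vot$ and of the $\mu_3$-orientation, which both live inside the maximal ideal of $\End_{C_3}(I) \bmod 3$).

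The main obstacle is the explicit computation of $\xi$, which I expect to carry out by identifying $\Phi^{C_3} \tmf(2)$ at $p = 3$ with a moduli-theoretic model built from supersingular modular forms with level-$2$ structure, and recognizing $\xi$ as a unit multiple of the mod-$3$ Hasse invariant of the Legendre family $y^2 = x(x-1)(x-\lambda)$ --- a non-zero weight-$2$ modular form. This part of the argument requires understanding how $\vot$, produced via a norm construction, encodes the mod-$3$ $p$-series data of the formal group law of the universal elliptic curve.
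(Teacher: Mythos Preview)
Your first two paragraphs are correct and match the paper's reduction: both identify source and target of $\phi$ with copies of the reduced regular representation $\bar\rho$ over $\Z_{(3)}$, and both use that $\End_{\F_3[C_3]}(\bar\rho)$ is a local ring whose maximal ideal consists of the transferred endomorphisms (equivalently, those divisible by $\sigma-1$), so that surjectivity of $\phi$ reduces to a single mod-$3$ nonvanishing.

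The gap is in your proposed verification via geometric fixed points. The assertion that ``$\Phi^{C_3}$ sends $\sigma$ to the identity, hence kills $(\sigma-1)$'' does not supply the needed implication: the class $\xi=\Phi^{C_3}(\vot)\in\pi_1\Phi^{C_3}\tmf(2)$ is an invariant of the \emph{equivariant} map, while membership of $\phi$ in the maximal ideal is a condition on the \emph{underlying} map, and you give no bridge between these two pieces of data. More fatally, the target group appears to vanish: one expects $\Phi^{C_3}\tmf(2)\simeq\F_3[y]$ with $|y|=6$ (this is listed among the desiderata for $\BP\langle1\rangle_{\mu_p}$ in the paper), so $\pi_1\Phi^{C_3}\tmf(2)=0$ and your criterion is vacuous. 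The Hasse-invariant identification in your final paragraph is in any case only a hope, not an argument.

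The paper instead finishes via an explicit formula proved earlier: modulo transfers, the image of $\vot$ in $\pi^e_{2p}(\Sigma^{1+\spoke}R)$ is
\[
y_1\otimes\frac{v_1-\gamma^{p-1}v_1}{p}+y_2\otimes\frac{\gamma v_1-v_1}{p}+\cdots,
\]
where $v_1$ is the classical $v_1$ of the underlying complex orientation. For $\tmf(2)$ one may take $v_1\equiv-\lambda_1-\lambda_2\pmod 3$; using the known action $\gamma\lambda_1=\lambda_2-\lambda_1$, $\gamma\lambda_2=-\lambda_1$, a two-line computation gives $(\gamma v_1-v_1)/3\equiv\lambda_1$ and $(v_1-\gamma^2 v_1)/3\equiv-\lambda_2$ modulo $3$, which visibly span $\pi^e_4\tmf(2)/3$. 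No appeal to $\Phi^{C_3}$ is needed.
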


\begin{thm}
For any $\mu_p$-orientation of the height $p-1$ Morava $E$-theory $E_{p-1}$, the span of $\vot$ inside $\pi^{e}_{2p-2} E_{p-1}$ maps surjectively onto $\pi^{e}_{2p-2} E_{p-1}/(p,\mathfrak{m}^2)$.
\end{thm}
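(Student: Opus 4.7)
The plan is to reduce the claim to a calculation with the classical $v_1$-classes of the $p-1$ complex orientations of $E_{p-1}$ provided by the underlying of the $\mu_p$-orientation.

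First, I apply \Cref{cnstr:v1mup}: the span is the image of $\pi_{2p-2}^e$ of a $C_p$-equivariant map $\phi \colon S^{2\rho - 1 - \spoke} \to E_{p-1}$ built from $\vot$ and the $\mu_p$-orientation. Non-equivariantly, $S^{2\rho - 1 - \spoke}$ is a wedge of $p-1$ copies of $S^{2p-2}$, so $\phi$ determines $p-1$ elements $\alpha_1,\ldots,\alpha_{p-1} \in \pi_{2p-2}^e E_{p-1}$ whose $\mathbb{Z}_{(p)}$-span is the span of $\vot$.

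Second, under the non-equivariant equivalence $\Cmu \simeq (\CP^{\infty})^{p-1}$, the underlying of the $\mu_p$-orientation restricts on each factor of $\CP^{\infty}$ to a complex orientation of $E_{p-1}$, producing $p-1$ complex orientations $\omega_1,\ldots,\omega_{p-1}$ with formal group laws $F_1,\ldots,F_{p-1}$ on $\pi_\ast^e E_{p-1}$. Using Hill's norm description $\vot = \NCp(v)$ for a suitable non-equivariant class $v \in \pi_2^e \Cmu$, the class $\alpha_k$ is identified, modulo $(p,\mathfrak{m}^2)$, with the classical $v_1$-class $v_1^{(k)}$ of $F_k$ (the coefficient of $x^p$ in the $p$-series of $F_k$).

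The main obstacle is step three: verifying that the classes $v_1^{(k)}$ collectively generate $\pi_{2p-2}^e E_{p-1}/(p,\mathfrak{m}^2)$. The key input is the interplay between two $C_p$-actions: the $C_p$-action on $\Cmu$ cyclically permutes the $\omega_k$, subject to the formal-group-law relation coming from the trivialization $\mathcal{L}_1 \otimes \cdots \otimes \mathcal{L}_p \simeq 1$, while the $C_p$-action on $E_{p-1}$ --- via the chosen order-$p$ element of the Morava stabilizer group --- acts explicitly on $\pi_0^e E_{p-1}/(p,\mathfrak{m}^2)$, trivially on the constant $\mathbb{F}_{p^{p-1}}$ and as the unique indecomposable $\mathbb{F}_{p^{p-1}}[C_p]$-module of rank $p-2$ on $\mathfrak{m}/\mathfrak{m}^2$. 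Matching these two structures, one computes the leading $p$-series coefficients of the $F_k$ modulo $(p,\mathfrak{m}^2)$ and verifies that the Galois-like twists distinguishing the $F_k$ fill out both the constant line and all of $\mathfrak{m}/\mathfrak{m}^2$ in the quotient. This explicit matching is the crux of the argument, and parallels the $p=2$ Real-oriented case in which a single $v_1$ already generates $\pi_2 E_1/(2)$.
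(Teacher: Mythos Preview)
Your overall strategy---reduce to the underlying map $S^{2\rho-1-\spoke}\to E_{p-1}$, extract its $p-1$ components $\alpha_1,\ldots,\alpha_{p-1}\in\pi^e_{2p-2}E_{p-1}$, and show these span the quotient---matches the paper. The gap is in your second step, and it propagates fatally into the third.

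You claim that $\alpha_k$ is congruent, modulo $(p,\mathfrak{m}^2)$, to the $v_1$-class $v_1^{(k)}$ of the $k$th ``diagonal'' complex orientation $\omega_k$. This identification is incorrect, and in fact if it were correct the theorem would be false for $p>2$. The point is that $v_1\bmod p$ is \emph{independent of the choice of complex orientation}: if $\phi$ is a strict isomorphism from $F_1$ to $F_2$, then modulo $p$ one has $\phi(x)^p\equiv x^p+O(x^{2p})$, and comparing $x^p$-coefficients in $[p]_{F_2}(\phi(x))=\phi([p]_{F_1}(x))$ forces $v_1^{(1)}\equiv v_1^{(2)}\bmod p$. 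So your $p-1$ classes $v_1^{(k)}$ all coincide modulo $p$, hence modulo $(p,\mathfrak{m}^2)$, and their span is only one-dimensional. Your step 3, which asserts that ``Galois-like twists distinguishing the $F_k$'' make the $v_1^{(k)}$ fill out the quotient, cannot succeed.

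What actually happens is subtler. The paper shows (\Cref{votalt} and \Cref{thm:votFormula}) that, modulo transfers, $\vot$ is $\Tr(\beta_1^p)/p$ rather than $\beta_1^p$ itself, and the resulting components are the \emph{difference quotients}
\[
\frac{v_1-\gamma^{p-1}v_1}{p},\quad \frac{\gamma v_1 - v_1}{p},\quad\ldots,\quad \frac{\gamma^{p-2}v_1-\gamma^{p-3}v_1}{p},
\]
where $v_1$ comes from a single underlying orientation and $\gamma$ is the $C_p$-action on $\pi^e_*E_{p-1}$. These are precisely the Hopkins--Miller element $v=\frac{v_1-\gamma v_1}{p}$ and its $C_p$-translates, which (\Cref{rmk:Ep-1-forward}) do span $\pi^e_{2p-2}E_{p-1}/(p,\mathfrak{m}^2)$. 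A separate lemma (\Cref{lem:transferlemma}) handles the ``modulo transfers'' indeterminacy: transfers land in the radical of $\End_{\mathbb{F}_p[C_p]}(\bar\rho)$ and so do not affect whether the map is an isomorphism. The division by $p$ is essential and is exactly what your norm heuristic misses.
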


\begin{rmk}
  The map $S^{2\rho-1-\Yright} \to R$ associated to a $\mu_p$-oriented $R$ has an interpretation that may be more familiar to readers acquainted with the Hopkins--Miller computation of the fixed points of $E_{p-1}$.
  Specifically, by definition there is a cofiber sequence
  \[S^{2\rho-2} \xrightarrow{\mathrm{tr}} (C_p)_+ \otimes S^{2\rho-2} \to S^{2\rho-1-\Yright},\]
  where $\mathrm{tr}$ is the transfer.
  It follows that the map $S^{2\rho-1-\Yright} \to R$ determines a traceless element in $\pi_{2p-2} ^e R$, and the existence of such a traceless element was a key tool in the computations of \cite{NaveEO}.
\end{rmk}

\subsection{Homological and homotopical evenness}

Non-equivariantly, complex orientation theory is intimately tied to the notion of evenness.  A fundamental observation is that, since $\mathbb{CP}^{\infty}$ has a cell decomposition with only even-dimensional cells, any ring $R$ with $\pi_{2*-1} R \cong 0$ must be complex orientable.

In $C_2$-equivariant homotopy theory, a ring $R$ is called \emph{even} if $\pi^{C_2}_{*\rho-1} R \cong \pi^{e}_{2*-1} R \cong 0$, and it is a basic fact that any even ring is Real orientable \cite[\S 3.1]{HillMeier}.

In $C_p$-equivariant homotopy theory, we propose the appropriate notion of evenness to be captured by the following definition, which we discuss in more detail in \Cref{sec:Purity}:

\begin{dfn}
    We say that a $C_p$-equivariant spectrum $E$ is \textit{homotopically even} if the following conditions hold for all $n \in \Z$:
    \begin{enumerate}
        \item $\pi^{e}_{2n-1}  E = 0$.
        \item $\pi^{C_p}_{2n\rho-1}E = 0$
        \item $\pi^{C_p}_{2n\rho-2-\spoke} E = 0$
    \end{enumerate}
\end{dfn}

\begin{rmk} In the presence of condition (1), condition (3) is
equivalent to the statement that the transfer is surjective
in degree $2n\rho-2$.  Conditions $(1)$ and $(2)$ constrain certain slices of $E$, as we spell out in \Cref{rmk:slice-comment}.
\end{rmk}

\begin{rmk}
  A $C_2$-spectrum $E$ is homotopically even, according to our definition above, if and only if it is even in the sense of \cite[\S 3.1]{HillMeier}.
\end{rmk}

We prove the following theorem in \Cref{sec:Cmu-Purity}.

\begin{thm}
If a $p$-local $C_p$-ring spectrum $R$ is homotopically even, then it is also $\mu_p$-orientable.
\end{thm}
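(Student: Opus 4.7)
The plan is to emulate the standard proof of Real orientability \cite[\S 3.1]{HillMeier}: equip $\Cmu$ with a good $C_p$-equivariant cell filtration whose successive cofibers are spheres matched precisely to the three conditions in the definition of homotopical evenness, then obstruction-theoretically extend the unit map $S^{1+\spoke} \to \Sigma^{1+\spoke} R$ from the bottom cell $\mathbb{CP}^1_{\mu_p}$ all the way up to $\Cmu$.

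The central input I would take from earlier in the paper (most likely \Cref{sec:Cmu-Purity}) is the existence of a $C_p$-cell filtration
\[ \mathbb{CP}^1_{\mu_p} = F_1 \hookrightarrow F_2 \hookrightarrow \cdots, \quad \Cmu \simeq \colim_n F_n, \]
whose successive cofibers $F_n / F_{n-1}$ are wedges of $C_p$-spheres of just the ``even'' forms for which the obstruction to extending a map into $\Sigma^{1+\spoke} R$ is killed by one of the three conditions of homotopical evenness. Concretely, for a fixed-type cell $S^V$ the obstruction at stage $n$ is a class in $\pi^{C_p}_{V-1}(\Sigma^{1+\spoke} R) = \pi^{C_p}_{V-2-\spoke}(R)$; the allowed representations $V$ are therefore those of the form $2m\rho$ (killed by condition (3)) and $2m\rho + 1 + \spoke$ (killed by condition (2)), while a free cell $(C_p)_+ \otimes S^{2k}$ contributes an obstruction in $\pi^e_{2k-3}(R)$, killed by condition (1).

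Given this input, the construction proceeds inductively. The base case is the unit map $S^{1+\spoke} \to \Sigma^{1+\spoke} R$ on $F_1 = \mathbb{CP}^1_{\mu_p}$, and at each stage $n$ the vanishing of every obstruction group allows one to extend across all attaching maps in $F_n / F_{n-1}$. Passing to the colimit, the Milnor sequence
\[ 0 \to \textstyle{\lim}^1 [\Sigma F_n, \Sigma^{1+\spoke} R] \to [\Cmu, \Sigma^{1+\spoke} R] \to \lim [F_n, \Sigma^{1+\spoke} R] \to 0 \]
has vanishing $\lim^1$ for the same evenness reasons, so one obtains a map $\Cmu \to \Sigma^{1+\spoke} R$ whose restriction to $\mathbb{CP}^1_{\mu_p}$ is the unit by construction, i.e., a $\mu_p$-orientation.

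The main obstacle, and in my view the heart of the paper, is the existence of the ``even'' cell filtration itself; this is the generalization of the conjugation-space phenomenon from the $C_2$-story advertised in the abstract, and is presumably what \Cref{sec:Cmu-Purity} establishes. The $p$-locality hypothesis enters precisely at this step, both in the matching of the cell representations to the three evenness conditions and in whatever transfer/norm relations are used to build the filtration; once the filtration is granted, the obstruction-theoretic step above is essentially formal.
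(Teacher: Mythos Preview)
Your proposal is correct and follows essentially the same approach as the paper: the paper's proof cites the even slice-cell filtration of $\Sigma^{\infty}\Cmu$ (Corollary~4.3) and the general extension result \Cref{prop:even-unob}, which together amount to exactly the obstruction-theoretic argument you outline. Your identification of which evenness condition kills which obstruction is accurate (with the small addendum that the $S^{2m\rho+1+\spoke}$ cells also produce free summands handled by condition~(1), via $S^{\spoke}\otimes S^{-\spoke} \simeq S^0 \oplus \bigoplus_{p-2}(C_p)_+$), and your explicit treatment of the $\lim^1$ term is a point the paper leaves implicit.
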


The key point here, as we explain in \Cref{sec:Cmu-Purity}, is that $\mathbb{CP}^{\infty}_{\mu_p}$ admits a \emph{slice cell decomposition} with even slice cells.  An even more fundamental fact, which turns out to be equivalent to the slice cell decomposition, is a splitting of the \emph{homology} of $\mathbb{CP}^{\infty}_{\mu_p}$:

\begin{dfn}
We say that a $C_p$-spectrum $X$ is \emph{homologically even} if there is a direct sum splitting
    \[X \otimes \HZpu \simeq \bigoplus_{k} A_k \otimes \HZpu,\]
where each $A_k$ is equivalent, for some $n \in \mathbb{Z}$, to one of
    \[(C_p)_+ \otimes S^{2n}, \, \, S^{2n\rho}, \, \, S^{2n\rho+1+\spoke}.\]
\end{dfn}

\begin{thm}
The space $\mathbb{CP}^{\infty}_{\mu_p}$ is homologically even.
\end{thm}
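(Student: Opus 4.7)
The plan is to construct an equivariant filtration of $\Cmu$ whose associated graded, after smashing with $\HZpu$, visibly decomposes as a sum of slice cells of the three allowed types, and then to argue that this decomposition lifts to an actual splitting. The natural candidate is the filtration pulled back from the $C_p$-invariant skeletal filtration of $(\CP^{\infty})^{\times p}$ by the subspaces $(\CP^n)^{\times p}$: defining $\Cmun$ as the fiber of the restricted multiplication map $(\CP^n)^{\times p} \to \CP^{\infty}$, one has $\colim_n \Cmun \simeq \Cmu$, with base case $\Cmuo = S^{1+\spoke}$, which is itself a slice cell of the third type.

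For the identification of the associated graded, I would exploit the fact that
\[H_*(\Cmu; \Zp) \cong \Zp[c_1, \ldots, c_p]/(c_1 + \cdots + c_p),\]
with $C_p$ acting by cyclic permutation on the generators. The decisive representation-theoretic input is that over $\Zp$ there are only three isomorphism classes of indecomposable $\Zp[C_p]$-lattices (Heller--Reiner/Diederichsen): the trivial module $\Zp$, the augmentation ideal $I$, and the regular representation $\Zp[C_p]$. These are precisely the underlying-homology profiles of the three allowed slice cells $S^{2n\rho}$, $S^{2n\rho+1+\spoke}$, and $(C_p)_+ \otimes S^{2n}$, respectively. A direct analysis of the monomial basis decomposes each graded piece: free $C_p$-orbits on non-diagonal monomials yield $\Zp[C_p]$ summands, the diagonal monomials (when they exist) yield $\Zp$ summands, and the ideal cut out by the fiber relation forces $I$ summands in the appropriate degrees.

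The last step---promoting the homological decomposition to a genuine stable splitting of $\Cmu \otimes \HZpu$---is the main obstacle. Obstructions live in Bredon Ext-groups between $\HZpu$-module slice cells of the three types, and must be shown to vanish in the $\mathrm{RO}(C_p)$-degrees dictated by the filtration. This reduces to verifying that the homotopy Mackey functors $\pi^{C_p}_{\star}(\Cmu \otimes \HZpu)$ are concentrated in $\mathrm{RO}(C_p)$-grades where the structure of $\pi^{C_p}_{\star}\HZpu$ leaves no room for nontrivial extensions. A secondary subtlety is pinning down the precise $\mathrm{RO}(C_p)$-grade at which each summand appears---in particular, justifying that the augmentation-ideal contributions arise from $S^{2n\rho+1+\spoke}$ at the expected suspension, rather than some other twist with the same underlying homology.
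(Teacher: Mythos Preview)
Your approach diverges from the paper's, and the step you yourself flag as ``the main obstacle'' is exactly where the argument is incomplete. The paper sidesteps this obstacle by taking a constructive route rather than an obstruction-theoretic one.

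Concretely, the paper first uses the $\mu_p$-orientation of $\HZpu$ to identify $\HZpu^{\Cmu_+} \simeq \HZpu[S^{-1-\spoke}]$, where $S^0[S^{-1-\spoke}]$ is defined as $\NCp(S^0[S^{-2}]) \otimes_{S^0[S^{-2}]} S^0$. It then decomposes $S^0[S^{-1-\spoke}]$ directly by \emph{building} maps from dual even slice spheres: powers of the norm class $\Nm(x)$ and a class $\wt{x}$ arising from the defining nullhomotopy give a map $\Psi$ from a wedge of slice spheres, which is checked to be an equivalence on geometric fixed points via an explicit bar computation, and then completed to a global equivalence by adjoining induced cells. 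The representation theory enters through Almkvist--Fossum's decomposition of $\Sym^*_{\F_p}(\rhobar)$, which plays the role of your Heller--Reiner input. No obstruction theory is needed because the splitting maps are written down rather than argued to exist.

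Your proposal, by contrast, tries to deduce the $\HZpu$-module splitting from the $\Zp[C_p]$-module structure of $H_*^e$ alone, but this underdetermines the answer: one must also control $\Phi^{C_p}(\Cmu \otimes \HZpu)$ and reconcile the two isotropy pieces, which you do not address. Your filtration $\Cmun = \fib\bigl((\CP^n)^{\times p} \to \CP^\infty\bigr)$ has no evident description of its associated graded (the paper explicitly warns that it does not construct a space-level filtration, and obtains the spectrum-level one only \emph{after} proving homological evenness). Finally, the appeal to vanishing of ``Bredon Ext-groups'' is not made precise: you neither name which $\pi^{C_p}_\star \HZpu$ groups must vanish nor argue that they do. The missing idea is the norm construction, which furnishes the actual maps.
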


\begin{rmk}
The notion of homological evenness we propose in this paper restricts, when $p=2$, to the notion studied by Hill in \cite[Definition 3.2]{HillPurity}.  Notably, our definition differs from Hill's when $p>2$.

Returning again to the group $C_2$, work of Pitsch, Ricka, and Scherer relates a version of homological evenness to the study of \emph{conjugation spaces} \cite{ConjSpaces}.  An interesting example of a conjugation space, generalized in \cite{RWSI} and its in-progress sequel, is $\mathrm{BU}_{\mathbb{R}} = \Omega^{\infty} \Sigma^{\rho} \mathrm{BP}\langle 1 \rangle_{\mathbb{R}}$.  It would be very interesting to develop a $C_p$-equivariant version of conjugation space theory.  Since $\mathrm{tmf}(2)$ is a form of $\mathrm{BP} \langle 1 \rangle_{\mu_3}$ (cf. \Cref{BP1-qst}), we wonder whether there is an interesting slice cell decomposition of $\Omega^{\infty} \Sigma^{1+\Yright} \tmf(2)$.
\end{rmk}

\begin{rmk}
The slice cell structure on $\Sigma^{\infty}\mathbb{CP}^{\infty}_{\mu_p}$ has many interesting attaching maps.
The first non-trivial attaching map is a class $\alpha_1^{\mu_p}:S^{2\rho-1} \to S^{1+\Yright}$, with fixed points the multiplication by $p$ map on $S^1$.
This class was previously studied by the third author \cite[\S 3.2]{DWThesis} and, independently, Mike Hill.
The $C_2$-equivariant $\alpha_1^{\mu_2}$ is the familiar map $\eta:S^{\sigma} \to S^0$.
\end{rmk}

\subsection{A view to the future}

The most natural next question, after those tackled in this paper, is the following:
\begin{qst} \label{E-qst}
Let $n \ge 1$, and fix a formal group $\Gamma$ of height $n(p-1)$ over a perfect field $k$ of characteristic $p$.  When is the associated Lubin--Tate theory $E_{k,\Gamma}$ $\mu_p$-orientable?
\end{qst}
We have not fully answered this question even for $n=1$, since we focus attention on the Honda formal group.

It seems likely that further progress on \Cref{E-qst}, at least for $n \ge 2$, must wait for work in progress of Hill--Hopkins--Ravenel, who have a program by which to understand the $C_p$-action on Lubin--Tate theories. 
As the authors understand that work in progress, it is to be expected that the height $n(p-1)$ Morava $E$-theory has homotopy generated by $n$ copies of the reduced regular representation, $v_1^{\mu_p},v_2^{\mu_p},\cdots,v_{n}^{\mu_p}$.
 One expects to be able to construct $\mu_p$ Morava $K$-theories, generated by a single $v_i^{\mu_p}$, and we expect at least these Morava $K$-theories to be homotopically even in the sense of this paper.

\begin{qst}
Can one construct homotopically even $\mu_p$ Morava $K$-theories?
\end{qst}

In light of the orientation theory of \Cref{sec:Orientation}, it seems useful to know if $\mu_p$ Morava $K$-theories admit \emph{norms}.  Indeed, at $p=2$ the Real Morava $K$-theories all admit the structure of $\mathbb{E}_\sigma$-algebras.  Since the first $\mu_3$ Morava $K$-theory should be $\mathrm{TMF}(2)/3$, or perhaps $L_{K(2)} \mathrm{TMF}(2)/3$, it seems pertinent to answer the following question first:

\begin{qst}
At the prime $p=3$, what structure is carried by the $C_3$-equivariant spectrum $L_{K(2)} \mathrm{TMF}(2)/3$?  Is there an analog of the $\mathbb{E}_\sigma$ structure carried by $\mathrm{KU}_{\mathbb{R}}/2$?
\end{qst}

In another direction, one might ask about other finite subgroups of Morava stabilizer groups:
\begin{qst}
Is there an analog of the notion of $\mu_p$-orientation related to the $Q_8$-actions on Lubin--Tate theories at the prime $2$?
\end{qst}
One may also go beyond finite groups and ask for notions capturing other parts of the Morava stabilizer group, such as the central $\mathbb{Z}_p^{\times}$ that acts on $\mathbb{CP}^{\infty} \simeq \mathrm{B}^2\mathbb{Z}_p$ after $p$-completion.

To make full use of all these ideas, one would like not only an analog of $\mathbb{CP}^{\infty}_{\mathbb{R}}$, but also an analog of at least one of $\mathrm{MU}_{\mathbb{R}}$ or $\mathrm{BP}_{\mathbb{R}}$.
Attempts to construct such analogs have consumed the authors for many years; we consider it one of the most intriguing problems in stable homotopy theory today.
%we consider it one of the most pressing problems in stable homotopy theory today.
%OK, Jeremy...

\begin{qst} (Hill--Hopkins--Ravenel \cite{HHRodd})
Does there exist a natural $C_p$-ring spectrum, $\mathrm{BP}_{\mu_p}$, with 
\begin{itemize}
\item Underlying, non-equivariant spectrum the smash product of $(p-1)$ copies of $\mathrm{BP}$.
\item Geometric fixed points $\Phi^{C_p} \mathrm{BP}_{\mu_p} \simeq \mathrm{H}\mathbb{F}_p$.
\end{itemize}
At $p=2$, it should be the case that $\mathrm{BP}_{\mu_2}=\mathrm{BP}_{\mathbb{R}}$.
\end{qst}
To the above we may add:
\begin{qst}
Does such a natural $\mathrm{BP}_{\mu_p}$ orient all $\mu_p$-orientable $C_p$-ring spectra, or at least all those that admit norms in the sense of \Cref{sec:Orientation}?
\end{qst}

Most of our attempts to build $\mathrm{BP}_{\mu_p}$ have proceeded via obstruction theory, while $\mathrm{MU}_{\mathbb{R}}$ is naturally produced via geometry.
It would be extremely interesting to see a geometric definition of an object $\mathrm{MU}_{\mu_{p}}$.
Alternatively, it would be very clarifying if one could prove that a reasonable $\mathrm{BP}_{\mu_p}$ does not exist.
As some evidence in that direction, the authors doubt any variant of $\mathrm{BP}_{\mu_p}$ can be homotopically even.

Even if $\mathrm{BP}_{\mu_p}$ cannot be built, or cannot be built easily, it would be excellent to know whether it is possible to build $C_p$-ring spectra $\mathrm{BP} \langle 1 \rangle_{\mu_p}$.

\begin{qst} \label{BP1-qst}
Does there exist, for each prime $p$, a $C_p$-ring $\mathrm{BP} \langle 1 \rangle_{\mu_p}$ satisfying the following properties:
\begin{itemize}
\item $\mathrm{BP} \langle 1 \rangle_{\mu_2}$ is the $2$-localization of $\mathrm{ku}_{\mathbb{R}}$, and $\mathrm{BP} \langle 1 \rangle_{\mu_3}$ is the $3$-localization of $\mathrm{tmf}(2)$.
\item The homotopy groups are given by 
\[\pi^{e}_* \mathrm{BP} \langle 1 \rangle_{\mu_p} \cong \mathbb{Z}_{(p)}[\lambda_1,\lambda_2,\cdots,\lambda_{p-1}],\] with $|\lambda_i|=2p-2$.
The $C_p$ action on these generators should make $\pi^{e}_{2p-2} \mathrm{BP} \langle 1 \rangle_{\mu_p}$ into a copy of the reduced regular representation.
\item There is a $C_p$-ring map $\mathrm{BP} \langle 1 \rangle_{\mu_p} \to E_{p-1}$.
\item $\mathrm{BP} \langle 1 \rangle_{\mu_p}$ is homotopically even, and in particular $\mu_p$-orientable.
\item The underlying spectrum $\left(\mathrm{BP} \langle 1 \rangle_{\mu_p}\right)^{e}$ additively splits into a wedge of suspensions of $\mathrm{BP} \langle p-1 \rangle$.
\item We have $\Phi^{C_p} \mathrm{BP} \langle 1 \rangle_{\mu_p} \simeq \F_p[y]$ for a generator $y$ of degree $2p$.
\end{itemize}
\end{qst}

It is plausible that $\mathrm{BP} \langle 1 \rangle_{\mu_p}$ should come in many forms, in the sense of Morava's forms of $K$-theory \cite{FormsK}.  A natural $\mathbb{E}_\infty$ form might be obtained by studying compactifications of the Gorbounov--Hopkins--Mahowald stack \cite{GorMah,HillThesis} of curves of the form
\[y^{p-1} = x(x-1)(x-a_1) \cdots (x-a_{p-2}).\]
Studying the uncompactified stack, it is possible to construct a $C_p$-equivariant $\mathbb{E}_\infty$ ring $E(1)_{\mu_p}$ which is a $\mu_p$ analog of uncompleted Johnson-Wilson theory.
The details of this construction will appear in forthcoming work of the second author.

\begin{rmk}
The $C_p$-action on $\mathbb{CP}^{\infty}_{\mu_p}$ is naturally the restriction of an action by $\Sigma_p$.
In fact, most objects in this paper admit actions of $\Sigma_p$, or at least of $C_{p-1} \ltimes C_p$, but these are consistently ignored.
The reader is encouraged to view this as an indication that the theory remains in flux, and welcomes further refinement.
\end{rmk}

\begin{rmk}
Since work of Quillen \cite{Quillen}, the notion of a complex orientation has been intimately tied to the notion of a formal group law.  There are hints throughout this paper, particularly in \Cref{sec:Orientation} and \Cref{sec:vot}, that the norm and diagonal maps on $\mathbb{CP}^{\infty}_{\mu_p}$ lead to equivariant refinements of the $p$-series of a formal group.  It may be interesting to develop the purely algebraic theory underlying these constructions, particularly if algebraically defined $v_i^{\mu_p}$ turn out to be of relevance to higher height Morava $E$-theories.
\end{rmk}

%\begin{dfn}
	%Let $E$ be a $C_p$-equivariant ring spectrum. Then a $C_p$-complex orientation of $E$ is a map $\CP^{\infty} _{\mu_p} \to \Sigma^{1+\spoke} E$ which restricts on the $2$-skeleton of $\CP^{\infty} _{\mu_p}$ to the $1+\spoke$ suspension of the unit.
%\end{dfn}
%
%From this data, we would like to construct an element (perhaps up to some indeterminacy) $\vot \in \pi_{2\rho - 1 - \spoke} E$. This restricts to an element in $\pi_{2p-2} E$. We do not desire this restricted element to be $v_1$, but rather $\frac{g v_1 - v_1}{p}$. This element is a unit in the case that $E = E_{p-1}$ with the action of $C_p \subset \SS_n$, and we would like to interpret this as identifying $E_{p-1}$ as a height $1$ $C_p$-equivariant spectrum. This is a reflection of a general height-shifting phenomenon in $C_p$-equivariant chromatic homotopy theory.
%
%To construct $\vot \in \pi_{2\rho - 1- \spoke}$, we will construct an element $\vot \in \pi_{2\rho} L^{2p}_{\mu_p}$ and declare $\vot \in \pi_{2\rho - 1 - \spoke}$ to be its image under the composition \[ L^{2p} _{\mu_p} \hookrightarrow \CP^{\infty} _{\mu_p} \to \Sigma^{1 + \spoke} E. \]

\addtocontents{toc}{\protect\setcounter{tocdepth}{0}}

\subsection{Notation and Conventions} \label{sec:Introduction:Notation} 
\begin{itemize}
  \item If $X$ is a $C_p$-space, we use $X^{e}$ to denote the underlying non-equivariant space, and we use $X^{C_p}$ to denote the fixed point space.  If $X$ is a $C_p$-spectrum, we will use either $\Phi^{e} X$ or $X^{e}$ to denote the underlying spectrum, and we use $\Phi^{C_p} X$ to denote the geometric fixed points.  

  \item We fix a prime number $p$, and throughout the paper all spectra and all (nilpotent) spaces are implicitly $p$-localized. In the $C_p$-equivariant setting, this means that we implicitly $p$-localize both underlying and fixed point spaces and spectra.
	
	\item If $X$ is a $C_p$-space or spectrum, we use $\pi^{e}_* X$ to denote the homotopy groups of $X^{e}$, considered as a graded abelian group \emph{with $C_p$-action}.  If $V$ is a $C_p$-representation, we use $\pi^{C_p}_{V} X$ to denote the set of homotopy classes of equivariant maps from $S^V$ to $X$.

  \item We let $S^{\spoke}$ denote the cofiber of the $C_p$-equivariant map $(C_p)_+ \to S^0$, and we also use $S^{\spoke}$ to refer to the suspension $C_p$-spectrum of this $C_p$-space. We let $S^{-\spoke}$ denote the Spanier-Whitehead dual of the $C_p$-spectrum $S^{\spoke}$.
    Given a $C_p$-representation $V$ and a $C_p$-spectrum $X$, we will use $\pi^{C_p} _{V+\spoke} X$ and $\pi^{C_p} _{V-\spoke}$ to denote the set of homotopy classes of equivariant maps from $S^{V+\spoke} \coloneqq S^V \otimes S^{\spoke}$ and $S^{V-\spoke} \coloneqq S^{V} \otimes S^{-\spoke}$ to $X$.

\item We let $\Cmu$ denote the fiber of the $C_p$-equivariant multiplication map $(\mathbb{CP}^{\infty})^{\times p} \to \mathbb{CP}^{\infty}$.

\item If $R$ is a classical commutative ring, we use $\bar{\rho}_{R}$ to denote the $R[C_p]$-module given by the augmentation ideal $\ker(R[C_p] \to R)$.  This is a rank $p-1$ $R$-module with generators permuted by the reduced regular representation of $C_p$.  We similarly use $\mathbbm{1}_{R}$ to denote the $R[C_p]$-module that is isomorphic to $R$ with trivial action.  We sometimes use $\rho_{R}$ to denote $R[C_p]$ itself, and write $\text{free}$ to denote a sum of copies of $\rho_{R}$.
\end{itemize}

\subsection{Acknowledgments} The authors thank Mike Hill, Mike Hopkins, and Doug Ravenel for inspiring numerous ideas in this document, as well as their consistent encouragement and interest in the work. We would especially like to thank Mike Hill for suggesting that our earlier definition of $\vot$ might be more conceptually viewed as a norm. We additionally thank Robert Burklund, Hood Chatham and Danny Shi for several useful conversations, and the anonymous referee for suggesting several improvements.

The first author was supported by NSF grant DMS-$1803273$, and thanks Yuzhi Chen and Wenyun Liu for their hospitality during the writing of this paper.  The second author was supported by an NSF GRFP fellowship under Grant No. 1745302. The third author was supported by NSF grant DMS-$1902669$.

\addtocontents{toc}{\protect\setcounter{tocdepth}{2}}

\section{Orientation theory} \label{sec:Orientation}
Non-equivariantly, one may study complex orientations of any unital spectrum $R$.  However, if $R$ is further equipped with the a homotopy commutative multiplication, then the theory takes on extra significance: in this case, a complex orientation of $R$ provides an isomorphism $R^*(\mathbb{CP}^{\infty}) \cong R_*[\![x]\!]$.

In this section, we work out the analogous theory for $\mu_p$-orientations.  In particular, we find that the theory of $\mu_p$-orientations takes on special significance for $C_p$ homotopy ring spectra $R$ that are equipped with a \emph{norm} $N_e^{C_p} R \to R$ refining the underlying multiplication.  Recall the following definition from the introduction:

\begin{dfn} A \emph{$\mu_p$-orientation} of a unital $C_p$-spectrum
$R$ is a map
	\[
	\Sigma^{\infty}\mathbb{CP}^{\infty}_{\mu_p} \longrightarrow
	\Sigma^{1+\Yright}R
	\]
such that the composite
	\[
	S^{1+\Yright} \longrightarrow
	\Sigma^{\infty}\mathbb{CP}^{\infty}_{\mu_p} \longrightarrow
	\Sigma^{1+\Yright}R
	\]
is equivalent to $\Sigma^{1+\Yright}$ of the unit.
\end{dfn}

For any $C_p$ representation sphere $S^V$, it is traditional to denote by $S^0[S^V]$ the free $\mathbb{E}_1$-ring spectrum
\[S^0[S^V] = S^0 \oplus S^V \oplus S^{2V} \oplus S^{3V} \oplus \cdots \]
Below, we extend this construction to take input not only representation spheres $S^V$, but spoke spheres as well.

\begin{dfn}
  For integers $n$, let
  \[S^0[S^{2n\rho-1-\spoke}] \coloneqq \NCp (S^0[S^{2np-2}]) \otimes_{S^0 [S^{2n\rho-2}]} S^0,\]
  where we consider $\NCp S^0 [S^{2np-2}]$ as a $S^0 [S^{2n\rho-2}]$-bimodule via the $\mathrm{E}_1$-map induced by the composite
%
%  \[
%	S^0[S^{2n\rho-1-\Yright}] \coloneq
%	\mathrm{cofib}\left(
%	\Sigma^{2n\rho-2} \mathrm{N}_e^{C_p}(S^0[S^{2np-2}]))
%	\stackrel{t}{\to} 
%	\mathrm{N}_e^{C_p}(S^0[S^{2np-2}])
%	\right),
%	\]
%	where $t$ is the $\mathrm{N}_e^{C_p}(S^0[S^{2np-2}])$-module map induced from the composite
  \[S^{2n\rho-2} \to (C_p)_+ \otimes S^{2np-2} \to \NCp S^0[S^{2np-2}].\]
  In this composite, the first map is adjoint to the identity on $S^{2np-2}$
  and the second map is the canonical inclusion.
	Note that $S^0[S^{2n\rho-1-\spoke}]$ is a unital left module over $\NCp S^0[S^{2np-2}]$.
	
  Furthermore, given a $C_p$-equivariant spectrum $R$, we set
  \[R[S^{2n\rho-1-\spoke}] \coloneq R \otimes S^0[S^{2n\rho-1-\spoke}].\]
%  When $R$ is a $C_p$ ring, $R[S^{2n\rho-1-\spoke}]$ is a unital left module over $\NCp R[S^{2np-2}]$.
\end{dfn}

\begin{comment}
\begin{dfn} If $A$ is a homotopy ring $C_p$-spectrum
and $V$ is a real $C_p$ representation, we define
	\[
	A[S^V] := \bigoplus \Sigma^{nV}A,
	\]
with the usual homotopy ring structure. Observe that
any map $S^V \to A'$, where $A'$ is a homotopy ring,
extends canonically to a map $S^0[S^V] \to A'$ using the
multiplication on $A'$.

For integers $n$, we define
	\[
	\mathbb{S}[S^{2n\rho-1-\Yright}]:=
	\mathrm{cofib}\left(
	\mathrm{N}_e^{C_p}(\mathbb{S}[S^{2np-2}])
	\stackrel{\cdot t}{\to} 
	\mathrm{N}_e^{C_p}(\mathbb{S}[S^{2np-2}]
	\right)
	\]
where $t$ is the twisted transfer of the generator:
	\[
	S^{2n\rho-2} \to C_{p+} \otimes S^{2np-2} \to
	\mathrm{N}_e^{C_p}(\mathbb{S}[S^{2np-2}]).
	\]
We extend this to $C_p$-spectra $R$ via:
	\[
	R[S^{2n\rho-1-\Yright}] := R\otimes \mathbb{S}[S^{2n\rho-1-\Yright}].
	\]
Observe that, 
\end{dfn}
\end{comment}

\begin{cnstr} \label{cnstr-orientation}
Suppose that $R$ is a homotopy ring in
$C_p$-spectra, further equipped with a genuine norm map
	\[
	\NCp R \to R
	\]
which is unital and restricts on underlying spectra to the composite
  \[ (\Phi^e R)^{\otimes p} \xrightarrow{\mathrm{id} \otimes \gamma \otimes \dots \otimes \gamma^{p-1}} (\Phi^e R)^{\otimes p} \xrightarrow{m} \Phi^e R, \]
where $\gamma \in C_p$ is the generator and $m$ is the $p$-fold multiplication map.

If $R$ is $\mu_p$-oriented by a map
	\[
	S^{-1-\Yright} \to R^{\mathbb{CP}^{\infty}_{\mu_p+}}
	\]
then we may produce a map
	\[
	R[S^{-1-\Yright}] \to R^{\mathbb{C}P^{\infty}_{\mu_p+}}
	\]
as follows. First, the composite
	\[
	S^{-2} \stackrel{e}{\to} \Phi^{e}(C_{p+}\wedge S^{-2})
	\to \Phi^{e}(S^{-1-\Yright}) \to \Phi^{e}(R^{\mathbb{CP}^{\infty}_{\mu_p+}}),
	\]
  where the map $e$ is the inclusion of the factor of $S^{-2}$ corresponding to the identity in $C_p$, extends to a map
	\[
	S^0[S^{-2}] \to \Phi^{e}(R^{\mathbb{CP}^{\infty}_{\mu_p+}})
	\]
since the target is a homotopy ring. Norming up,
and combining the norm on $R$ with the diagonal map
$\mathbb{CP}^{\infty}_{\mu_p} \to 
\mathrm{Map}(C_p, \mathbb{CP}^{\infty}_{\mu_p})$, we get
a map
	\[
	\mathrm{N}_e^{C_p}(S^0[S^{-2}]) \to
	\mathrm{N}_e^{C_p}(R^{\mathbb{CP}^{\infty}_{\mu_p+}})
	\to R^{\mathbb{CP}^{\infty}_{\mu_p+}}.
	\]
Finally, the extension of $C_{p+} \wedge S^{-2} \to
R^{\mathbb{CP}^{\infty}_{\mu_p+}}$ over $S^{-1-\Yright}$
provides a nullhomotopy of the composite
  \[S^{-2} \to (C_p)_+ \otimes S^{-2} \to R^{\mathbb{CP}^{\infty}_{\mu_p+}},\]
  producing a map
	\[
	S^0[S^{-1-\Yright}] \to R^{\mathbb{CP}^{\infty}_{\mu_p+}}.
	\]
We finish by extending scalars to $R$, using the assumption that $R$ is a homotopy ring.
\end{cnstr}

\begin{cnstr} If $R$ is $\mu_p$-oriented then so too is the Postnikov
truncation
$R_{\le n}$. The construction above is natural, and so we may
form a map
	\[
	R[\![S^{-1-\Yright}]\!] :=
	\varprojlim R_{\le n}[S^{-1-\Yright}] \to
	\varprojlim (R_{\le n})^{\mathbb{CP}^{\infty}_{\mu_p+}}
	\simeq R^{\mathbb{CP}^{\infty}_{\mu_p+}}.
	\]
\end{cnstr}

\begin{thm} \label{thm:orient-homology}
  Suppose $R$ is a $\mu_p$-oriented homotopy $C_p$ ring, further equipped with a unital homotopy $N^{C_p}_e R$-module structure such that the unit
	\[N^{C_p}_e R \to R\]
	respects the underlying multiplication in the sense of \Cref{cnstr-orientation}.
Then, with notation as above, the map
	\[
	R[\![S^{-1-\Yright}]\!] \longrightarrow R^{\mathbb{CP}^{\infty}_{\mu_p +}}
	\]
is an equivalence.
\end{thm}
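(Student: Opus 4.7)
The plan is to compare both sides via compatible filtrations and show that the induced map on associated graded is an equivalence piece by piece.

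I would begin by reducing to individual Postnikov stages. Since function spectra preserve inverse limits, the target $R^{\mathbb{CP}^{\infty}_{\mu_p+}}$ equals $\varprojlim_n (R_{\le n})^{\mathbb{CP}^{\infty}_{\mu_p+}}$, matching the definition of the source as $\varprojlim_n R_{\le n}[S^{-1-\Yright}]$. It therefore suffices to prove the statement for each Postnikov truncation $R_{\le n}$ separately, where we may assume $R$ is bounded above.

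Next, I would invoke the homological evenness of $\mathbb{CP}^{\infty}_{\mu_p}$ (proven earlier in the paper) to produce a slice-cell filtration $X_0 \subset X_1 \subset X_2 \subset \cdots$ of the $C_p$-spectrum $\Sigma^{\infty}_+ \mathbb{CP}^{\infty}_{\mu_p}$, whose successive cofibers are finite direct sums of cells of the form $(C_p)_+ \otimes S^{2k}$, $S^{2k\rho}$, or $S^{2k\rho+1+\Yright}$. Applying $R^{(-)}$ produces a complete descending filtration on $R^{\mathbb{CP}^{\infty}_{\mu_p+}}$, which converges under our boundedness assumption on $R$.

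On the source, the weight grading on the $\mathbb{E}_1$-algebra $S^0[S^{-2}]$ induces a $C_p$-equivariant weight grading on $\NCp S^0[S^{-2}]$, which descends to a filtration on $S^0[S^{-1-\Yright}] = \NCp S^0[S^{-2}] \otimes_{S^0[S^{-2}]} S^0$ and hence on $R[\![S^{-1-\Yright}]\!]$ after extending scalars. The map of the theorem is compatible with these filtrations: on the bottom nontrivial piece it is precisely the $\mu_p$-orientation, an equivalence by hypothesis, while on higher pieces compatibility follows from the functorial construction in Construction~\ref{cnstr-orientation} involving the norm $\NCp R \to R$ and the diagonal $\mathbb{CP}^{\infty}_{\mu_p} \to \mathrm{Map}(C_p, \mathbb{CP}^{\infty}_{\mu_p})$.

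The main obstacle will be verifying the match on associated graded pieces for weights $n \geq 2$: one must explicitly identify the $n$-th weight piece of $\NCp S^0[S^{-2}] \otimes_{S^0[S^{-2}]} S^0$ with the Spanier--Whitehead dual of the $n$-th cell appearing in the filtration of $\mathbb{CP}^{\infty}_{\mu_p}$. This requires computing norms of representation spheres $\NCp S^{-2n}$ and understanding how the quotient by the $S^0[S^{-2}]$-action (which kills a transfer-type element) produces the expected combination of cells $(C_p)_+ \otimes S^{-2k}$, $S^{-2k\rho}$, and $S^{-2k\rho-1-\Yright}$. Once this identification is secured, the theorem follows by induction over the filtration together with a standard five-lemma argument on cofibers.
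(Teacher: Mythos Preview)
Your reduction to Postnikov truncations is correct and matches the paper. However, the rest of the argument has a fatal circularity: the homological evenness of $\mathbb{CP}^{\infty}_{\mu_p}$ is not proven earlier in the paper but \emph{later}, in \Cref{thm:CPmup-pure}, and that proof relies on \Cref{cor:uZ-CPmup}, which is itself a direct corollary of \Cref{thm:orient-homology} applied to $R=\underline{\mathbb{Z}}$. So you cannot invoke the slice-cell filtration of $\Sigma^{\infty}_+\mathbb{CP}^{\infty}_{\mu_p}$ here.

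Even setting aside the circularity, the step you flag as the ``main obstacle''---identifying the $n$th weight piece of $\NCp S^0[S^{-2}]\otimes_{S^0[S^{-2}]}S^0$ with the dual of the $n$th slice cell---is essentially the content of \Cref{thm:spoke-alg-splitting} and \Cref{prop:sym-rhobar}, so it is not a small gap to fill.

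The paper's argument avoids all of this by using isotropy separation rather than a cell filtration. After reducing to the bounded-above case, it checks the map separately on underlying spectra (where it is the classical statement that a complex orientation computes $R^*((\mathbb{CP}^{\infty})^{\times(p-1)})$) and on geometric fixed points. For the latter, a short lemma shows that $\Phi^{C_p}$ commutes with $(-)^{X_+}$ for finite-type $X$ and bounded-above $R$, reducing the question to the cohomology of $BC_p=(\mathbb{CP}^{\infty}_{\mu_p})^{C_p}$ in $\Phi^{C_p}R$. Since any $\mu_p$-oriented $R$ has $p=0$ in $\Phi^{C_p}R$, the Atiyah--Hirzebruch spectral sequence has $E_2$-page $\pi_*(\Phi^{C_p}R_{\le n})\otimes_{\mathbb{F}_p}\Lambda(x)\otimes_{\mathbb{F}_p}\mathbb{F}_p[y]$, and the $S^0[S^{-2}]$-module and $\Phi^{C_p}R_{\le n}$-module structures force degeneration and give the result.
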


\begin{proof} By construction, it suffices to prove that the map
	\[
	R_{\le n}[S^{-1-\Yright}] \longrightarrow 
	(R_{\le n})^{\mathbb{CP}^{\infty}_{\mu_p +}}
	\]
is an equivalence for each $n\ge 0$. This is clear on underlying
spectra. On geometric fixed points we can factor this map as
	\[
    (\Phi^{C_p} R_{\le n})[S^{-1}] 
	\to
  (\Phi^{C_p} R_{\le n})^{\mathrm{B}C_{p+}}
	\to
  \Phi^{C_p} \left((R_{\le n})^{\mathbb{CP}^{\infty}_{\mu_p +}}\right),
	\]
being careful to interpret the source as a module (this is not a map
of rings).  Specifically, the above composite is one of unital $\Phi^{C_p} N^{C_p}_eS^0[S^{-2}] \simeq S^0[S^{-2}]$-modules and, separately, one of $\Phi^{C_p} R_{\le n}$-modules.

The second map is an equivalence by \Cref{lem:gfp-coh-comm} below, so
we need only prove the first map is an equivalence.
Since $p=0$ in $\Phi^{C_p} R$, the Atiyah-Hirzebruch spectral sequence
computing $\pi_*(\Phi^{C_p} R_{\le n})^{BC_{p+}}$ has $\mathrm{E}_2$-page given by
\[
  \pi_*(\Phi^{C_p} R_{\le n})  \otimes_{\mathbb{F}_p} \Lambda_{\F_p} (x) 
	\otimes_{\mathbb{F}_p}
	\mathbb{F}_p[y]
\]
The class $x$ is realized by applying geometric fixed point to the $\mu_p$-orientation.
The powers of $y$ are obtained from the unit of the unital $S^0[S^{-2}]$-module structure.
Using the $\Phi^{C_p} R_{\le n}$-module structure, this implies that the spectral sequence degenerates
and moreover that the first map is an equivalence.
%This is equivalent to the collapse of the Atiyah-Hirzebruch
%spectral sequence computing 
%$\pi_*(R_{\le n}^{\Phi C_p})^{BC_{p+}}$, which,
%since $p=0$ in $R^{\Phi C_p}$, has
%$E_2$-page given by
%	\[
%	\pi_*(R_{\le n})^{\Phi C_p}  \otimes_{\mathbb{F}_p} \Lambda(x) 
%	\otimes_{\mathbb{F}_p}
%	\mathbb{F}_p[y]
%	\]
%The class $x$ is a permanent cycle since it is realized by
%applying geometric fixed points to the $\mu_p$-orientation.
%The class $y$ is obtained by norming up the
%complex orientation of $R^e$ and then taking geometric
%fixed points. The module structure propogates the rest and
%the result is proved.
\end{proof}

\begin{lem} \label{lem:gfp-coh-comm}
  If $R$ is bounded above,
and $X$ is a $C_p$-space of finite type, then the map
	\[
  (\Phi^{C_p} R)^{X^{C_p}_+}\to
  \Phi^{C_p} (R^{X_+})
	\]
is an equivalence.
\end{lem}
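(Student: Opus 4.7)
My plan is to reduce, by a cell-by-cell argument combined with a boundedness-driven passage to the limit, to the case where $X$ is a single $C_p$-orbit, and then verify the equivalence directly in those two cases.

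First I would establish that both $F(X) := (\Phi^{C_p} R)^{X^{C_p}_+}$ and $G(X) := \Phi^{C_p}(R^{X_+})$ are contravariantly exact: they send homotopy pushouts of $C_p$-spaces to homotopy fiber sequences of spectra. For $F$, this combines the fact that $(-)^{C_p}$ on spaces preserves CW-cofiber sequences with the fact that mapping into a spectrum turns cofibers into fibers. For $G$, this combines the fact that $R^{(-)_+}$ sends cofibers to fibers with the exactness of $\Phi^{C_p}$ between stable $\infty$-categories. By induction on equivariant cells it then suffices to verify the equivalence on the two orbits $X = C_p/H$ for $H \in \{e, C_p\}$.

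For $X = \mathrm{pt}$, both sides are manifestly $\Phi^{C_p} R$. For $X = C_p$, the source vanishes because $C_p^{C_p} = \emptyset$ makes $X^{C_p}_+$ a one-point pointed space with zero suspension spectrum. The target also vanishes: the Wirthm\"uller isomorphism identifies $R^{(C_p)_+}$ with $(C_p)_+ \otimes R$ in $C_p$-spectra, and then symmetric monoidality of $\Phi^{C_p}$ combined with $\Phi^{C_p}((C_p)_+) = \Sigma^{\infty}(C_p^{C_p})_+ = 0$ yields $G(X) = 0$. Together with the exactness above, this proves the equivalence for every finite $C_p$-CW complex.

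To upgrade from finite to finite type, I would write $X = \mathrm{colim}_n X_n$ with each $X_n$ a finite sub-CW-complex, so that $F(X) = \lim_n F(X_n)$ and $R^{X_+} = \lim_n R^{(X_n)_+}$ tautologically. The question becomes whether $\Phi^{C_p}$ commutes with the latter limit, and this is the main obstacle. Here is where the boundedness hypothesis enters: if $\pi_i R = 0$ for $i > N$, then the fiber of $R^{X_+} \to R^{(X_n)_+}$ is $R^{X_+/(X_n)_+}$, and since this target pointed space has cells only in dimensions $\geq n+1$, an Atiyah--Hirzebruch argument shows that $\pi_i R^{X_+/(X_n)_+} = 0$ for $i > N - n - 1$. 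Hence on each fixed homotopy group the tower $\{R^{(X_n)_+}\}$ is eventually constant, so $\Phi^{C_p}$ commutes with the limit and the finite case already handled completes the proof.
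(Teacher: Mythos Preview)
Your proof is correct and follows essentially the same strategy as the paper's: reduce to finite skeleta using the bounded-above hypothesis to control the limit, and handle the finite case via exactness of $\Phi^{C_p}$. The paper is simply terser---it invokes exactness directly for finite $X$ rather than checking the two orbit types, and phrases the passage to the limit as the fibers of $\Phi^{C_p}(R^{X_+}) \to \Phi^{C_p}(R^{(X_n)_+})$ becoming increasingly coconnective.
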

\begin{proof} Write $X = \colim X_n$ where the
$X_n$ are skeleta for a $C_p$-CW-structure on $X$
with each $X_n$ finite. Then the fiber of
	\[
    \Phi^{C_p} (R^{X_+}) \to \Phi^{C_p} (R^{X_{n+}})
	\]
becomes increasingly coconnective, and hence the map
	\[
    \Phi^{C_p} (R^{X_+}) \to
  \varprojlim \Phi^{C_p} (R^{X_{n+}})
	\]
is an equivalence. We are thus reduced to the case $X=X_n$
  finite, where the result follows since $\Phi^{C_p} (-)$ is exact.
\end{proof}

\begin{rmk}
In practice, the conditions of \Cref{thm:orient-homology} are often easy to check.  For example, any $C_p$-commutative ring $R$ in the homotopy category of $C_p$-spectra will be equipped with a unital homotopy $N_e^{C_p} R$-module structure respecting the multiplication in the sense of \Cref{cnstr-orientation} \cite{HillHopkins}.  We choose to write \Cref{thm:orient-homology} in generality because, even non-equivariantly, one occasionally studies orientations of rings that are not homotopy commutative (like Morava $K$-theory at the prime $2$). 
\end{rmk}

Since $\HZu$ is $\mu_p$-oriented by \Cref{cpmup-as-em-space} and truncated, we have the following corollary of \Cref{thm:orient-homology}:

\begin{cor} \label{cor:uZ-CPmup}
  There is a natural equivalence
  \[\HZu[S^{-1-\spoke}] \simeq \HZu^{\mathbb{CP}^{\infty} _{\mu_p+}}.\]
\end{cor}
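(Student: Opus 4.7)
The plan is to specialize \Cref{thm:orient-homology} to $R = \HZu$, which reduces the work to three verifications: that $\HZu$ is $\mu_p$-oriented, that it carries the compatible norm structure demanded in the hypothesis, and that its Postnikov tower is short enough that $\HZu[\![S^{-1-\spoke}]\!]$ collapses onto $\HZu[S^{-1-\spoke}]$.

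For the first verification, the orientation is already in hand from \Cref{cpmup-as-em-space}: the equivalence $\Omega^{\infty}\Sigma^{1+\spoke}\HZu \simeq \Cmu$ is adjoint to a map $\Sigma^{\infty}\Cmu \to \Sigma^{1+\spoke}\HZu$ whose restriction along $\Sigma^{\infty}\mathbb{CP}^{1}_{\mu_p} = S^{1+\spoke}$ is, by construction, the $\Sigma^{1+\spoke}$-suspension of the unit. For the second, $\HZu$ is a genuine $C_p$-$\mathbb{E}_\infty$-algebra (it is the constant Mackey functor commutative ring), so in particular it is a $C_p$-commutative ring in the homotopy category, and the remark following \Cref{thm:orient-homology} guarantees that this is enough to produce a unital homotopy $N_e^{C_p}\HZu$-module structure compatible with the underlying multiplication as in \Cref{cnstr-orientation}.

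For the third, the Mackey-functor homotopy of $\HZu$ is concentrated in degree zero, so its Postnikov truncations satisfy $\HZu_{\le n} \simeq \HZu$ for all $n \ge 0$. Consequently the tower defining $\HZu[\![S^{-1-\spoke}]\!]$ is eventually constant and $\varprojlim_n \HZu_{\le n}[S^{-1-\spoke}] \simeq \HZu[S^{-1-\spoke}]$. Putting these three observations together, \Cref{thm:orient-homology} directly delivers the desired equivalence, and its naturality is inherited from the naturality of the construction in \Cref{cnstr-orientation}. There is no serious obstacle; the substantive content was packaged into \Cref{thm:orient-homology} and \Cref{cpmup-as-em-space}, and the corollary is simply the statement that their hypotheses are met with minimal overhead in the special case $R = \HZu$.
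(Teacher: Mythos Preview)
Your proof is correct and matches the paper's approach exactly: the paper simply notes that $\HZu$ is $\mu_p$-oriented by \Cref{cpmup-as-em-space} and truncated, and deduces the result from \Cref{thm:orient-homology}. You have spelled out the same three verifications (orientation, norm structure via the remark after \Cref{thm:orient-homology}, and constancy of the Postnikov tower) in more detail, but the argument is identical.
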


\section{Evenness} \label{sec:Purity}
%
%In non-equivariant homotopy theory, complex orientations are closely tied to evenness. Since the homology of $\CP^\infty$ is concentrated in even degrees, any ring spectrum whose homotopy is concentrated in even degrees automatically admits a complex orientation. This makes it easy to show that many ring.
%
%A $C_2$-equivariant version of ...
%
In this section, we will introduce a notion of \textit{evenness} in $C_p$-equivariant homotopy theory. This is a generalization of the notion of evenness in non-equivariant homotopy theory.
Evenness comes in two forms: \textit{homological} evenness and \textit{homotopical} evenness. Homological evenness is a $C_p$-equivariant version of the condition that a spectrum have homology concentrated in even degrees, and homotopical evenness corresponds to the condition that a spectrum have homotopy concentrated in even degrees.
%
%In the case $p=2$, a homotopically even $C_2$-spectrum is the same as an even $C_2$-spectrum in the sense of \cite[Definition 3.1]{HillMeier}.
%Similarly, our definition of a homologically even $C_2$-spectrum reduces to the definition of a homologically pure $C_2$-spectrum given in \cite[Definition 3.2]{HillPurity}.
%However, when $p$ is odd, the notion of homological evenness studied in this section differs from the notion of homological purity studied by Hill.
%This is because we allow certain \emph{spoke} phenomena to appear in our homologically even objects. As we shall see, these phenomena are ubiquitous in examples, especially those of interest in chromatic homotopy theory.

The main results in this section are \Cref{prop:cell-structure}, which shows that, under certain conditions, a bounded below homologically even spectrum admits a cell decomposition into \textit{even slice spheres} (defined below), and \Cref{prop:even-unob}, which shows that there are no obstructions to mapping in a bounded below homologically even spectrum to a homotopically even spectrum.
%
%In \Cref{sec:Cmu-Purity}, we will show that $\Sigma^{\infty} \Cmu$ is homologically even. When combined with \Cref{prop:pure-splitting}, this implies that any homotopically even $C_p$-ring spectrum admits a $\mu_p$-orientation in the sense of \Cref{dfn:mup-orient}.
%
%In \Cref{sec:Hmtpy-Purity}, we will provide our two main examples of homotopically eve ring spectra: the height $p-1$ Morava $E$-theories $E_{p-1}$ associated to the Honda formal group, equipped with the Goerss-Hopkins-Miller $C_p$-action, and the $C_3$-ring spectrum $\tmf(2)$ of connective topological modular forms with full level $2$ structure. In particular, we see that these $C_p$-ring spectra are $\mu_p$-orientable.
%
%Purity is an equivariant notion of evenness. We can ask for even homology or even homotopy, so we have homological and homotopical evenness. Our notion of evenness is different from what has appeared in the literature before, but our notion is adapted to the examples.
%
%Purity is based on the following basic even spheres.

\subsection{Homological Evenness}

We begin our discussion of evenness with the definition of an even slice sphere.

\begin{dfn} \label{dfn:even-slice-sphere}
    We say that a $C_p$-equivariant spectrum is an \textit{even slice sphere} if it is equivalent to one of the following for some $n \in \Z$:
    \[(C_p)_+ \otimes S^{2n}, \, \, S^{2n\rho}, \, \, S^{2n\rho+1+\spoke}.\]
    A \textit{dual even slice sphere} is the dual of an even slice sphere.
    The \textit{dimension} of a (dual) even slice sphere is the dimension of its underlying spectrum.
\end{dfn}

\begin{rmk}
  The phrase \emph{slice sphere} is taken from \cite[Definition 2.3]{DWSlices}, where a $G$-equivariant slice sphere is defined to be a compact $G$-equivariant spectrum, each of whose geometric fixed point spectra is a finite direct sum of spheres of a given dimension.
    %A slice sphere $W$ is said to be a slice $n$-sphere if, for every $H \subset G$, the spectrum $W^{\Phi H}$ is a finite direct sum of spheres of dimension

    It is easy to check that the (dual) even slice spheres of \Cref{dfn:even-slice-sphere} are slice spheres in this sense.
    %Moreover, a (dual) even slice spheres is of dimension $n$ if and only if it is a slice $n$-sphere.
\end{rmk}

\begin{rmk}
    In the case $p=2$, the even slice spheres are precisely those of the form
    \[(C_2)_+ \otimes S^{2n}\text{ or } S^{n\rho}\]
    for some $n \in \Z$.
\end{rmk}

\begin{dfn}
    We say that a $C_p$-equivariant spectrum $X$ is \textit{homologically even} if there is an equivalence of $\HZpu$-modules
    \[X \otimes \HZpu \simeq \bigoplus_{n} S_n \otimes \HZpu,\]
    where $S_n$ is a direct sum of even slice spheres of dimension $2n$.
    %We say that a $C_p$-equivariant spectrum $X$ is homologically pure if there is a splitting of $\HZu$-modules:
    %\[X \otimes \HZu \simeq \bigoplus_{n} (C_p)_+ \otimes \Sigma^{2n} \HZu \oplus \bigoplus_{n} \Sigma^{2n\rho} \HZu \oplus \bigoplus_{n} \Sigma^{2n\rho + 1 + \spoke} \HZu.\]
    %\todo{With multiplicities.}
\end{dfn}

\begin{rmk}
  When $p=2$, this recovers the notion of homological purity given in \cite[Definition 3.2]{HillPurity}. However, when $p$ is odd, our definition of homological evenness differs from Hill's definition of homological purity.
  The most important difference is that we allow the spoke spheres $S^{2n\rho+1+\spoke}$ to appear in our definition. This is necessary for $\Cmu$ to be homologically even.
\end{rmk}

As in the non-equivariant case, homological evenness for a bounded below spectrum is equivalent to the existence of an even cell structure. To prove this, we need to recall the following definition from \cite{DWSlices, HY}:
%Before we prove this, we will need the following lemma.

\begin{dfn}
    %We say that a $C_p$-spectrum $X$ is slice $n$-connective if:
    %\begin{enumerate}
        %\item $X$ is $n$-connective, and
        %\item $\gfp X$ is $\lfloor \frac{n}{p} \rfloor$-connective.
    %\end{enumerate}
  We say that a $C_p$-equivariant spectrum $X$ is \emph{regular slice $n$-connective} if:
    \begin{enumerate}
      \item $X^e$ is $n$-connective, and
      \item $\gfp X$ is $\lceil \frac{n}{p} \rceil$-connective.
    \end{enumerate} 
    %
    %Similarly, we say that $X$ is slice $n$-truncated if:
    %\begin{enumerate}
    %    \item $X$ is $n$-truncated, and
    %    \item 
    %\end{enumerate}
%
    %We let $\Spcp_{\geq n} \subset \Spcp$ denote the full subcategory of slice $n$-connective objects, and let
    %$P_n : \Spcp \to \Spcp_{\geq n}$ denote the right adjoint to the inclusion.
    %Moreover, we let $P^n$ denote the cofiber of the natural transformation $P_{n+1} \to \mathrm{id}$.
    %There is a canonical equivalence $P^n P_n \simeq P_n P^n$, and we let $P^n _n$ denote either of these.
%
    %Then the slice tower of a $C_p$-spectrum $X$ is the tower
    %\[\dots \to P_{n+1} X \to P_n X \to P_{n-1} X \to \dots.\]
%
    %Similarly, we let $\Spcp_{\geq, \reg}$, $P_{n} ^{\reg}$, and 
    %\[\dots \to P_{n+1} X \to P_n X \to P_{n-1} X \to \dots\]
    %denote the regular analogues of the above.
    Furthermore, we say that $X$ is \emph{bounded below} if it is regular slice $n$-connective for some integer $n$.
\end{dfn}

\begin{lem} \label{lem:slice-HF-check}
  Let $X$ be a bounded below $C_p$-spectrum with the property that $\Phi^{C_p} X$ is of finite type. Then $X$ is regular slice $n$-connective if and only if $X \otimes \HZpu$ is regular slice $n$-connective.
\end{lem}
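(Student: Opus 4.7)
The plan is to separate the two conditions defining regular slice $n$-connectivity and handle each using that smashing with $\HZpu$ commutes with taking underlying spectra and geometric fixed points: $(X \otimes \HZpu)^e \simeq X^e \otimes \HZp$ and $\gfp(X \otimes \HZpu) \simeq \gfp X \otimes \gfp \HZpu$. The key inputs are that both smash factors are $0$-connective ring spectra, with $\pi_0$ equal to $\HZp$ and $\Fp$ respectively. The connectivity of $\gfp \HZpu$ follows because $\HZpu$ is the zeroth slice of the $p$-local sphere, so its geometric fixed points is regular slice $0$-connective; the computation $\pi_0 \gfp \HZpu = \Fp$ is standard, arising as the cokernel of the transfer $\HZp \xrightarrow{p} \HZp$ on $\pi_0$.

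The forward direction is then essentially automatic: smashing an $m$-connective spectrum with a $0$-connective ring spectrum preserves $m$-connectivity, so if $X$ is regular slice $n$-connective then $X^e \otimes \HZp$ is $n$-connective and $\gfp X \otimes \gfp \HZpu$ is $\lceil n/p \rceil$-connective. For the reverse direction on underlying spectra, since $X^e$ is bounded below and already $p$-local, its lowest nonvanishing homotopy group $A$ satisfies $A \otimes_{\mathbb{Z}} \HZp = A$, so $A$ appears as the lowest nonvanishing homotopy of $X^e \otimes \HZp$ as well; hence $n$-connectivity of the smash product forces $n$-connectivity of $X^e$. No finite-type hypothesis is needed in this case.

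The substantive step is the reverse direction on geometric fixed points. I would set $M \coloneqq \gfp X$ and suppose, for contradiction, that $M$ has lowest nonvanishing homotopy group $\pi_k M = A \ne 0$ in some degree $k < \lceil n/p \rceil$. Since $\gfp \HZpu$ is $0$-connective with $\pi_0 = \Fp$, the bottom of the smash product computes $\pi_k(M \otimes \gfp \HZpu) = A \otimes_{\mathbb{Z}} \Fp = A/pA$, which by assumption vanishes. Here the finite-type hypothesis enters critically: it guarantees that $A$ is a finitely generated $\HZp$-module, so Nakayama's lemma forces $A = 0$, contradicting our choice of $k$. Killing $\pi_k M$ and iterating this bootstrap then shows $M$ is $\lceil n/p \rceil$-connective. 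The main obstacle is this Nakayama step: without the finite-type hypothesis the argument genuinely fails (a rational $M$ would have $M \otimes \gfp \HZpu \simeq 0$ with $M \ne 0$), so pinning down that the hypothesis yields the correct flavor of finite generation on each homotopy group is the essential delicate point.
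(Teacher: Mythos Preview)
Your proposal is correct and follows essentially the same approach as the paper: reduce to underlying and geometric fixed points separately, using that $\HZp$ and $\gfp \HZpu$ (which the paper identifies as $\HFp[y]$, $|y|=2$) detect connectivity of bounded below $p$-local spectra, with the finite-type hypothesis entering via Nakayama on the geometric fixed points side. The paper compresses this into two sentences, but the content is identical to what you have written out; your explicit identification of the Nakayama step is exactly what the paper abbreviates as ``a finitely generated $\Z_{(p)}$-module is trivial if and only if it is trivial after tensoring with $\F_p$.''
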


\begin{proof}
  For the underlying spectrum, the follows from the fact that $\HZp$ detects connectivity of bounded below $p$-local spectra. For the geometric fixed points, we use the fact that $\gfp \HZpu = \HFp[y]$, $\abs{y} = 2$, detects connectivity of bounded below $p$-local spectra which are of finite type, since a finitely generated $\Z_{(p)}$-module is trivial if and only if it is trivial after tensoring with $\F_p$.
\end{proof}

\begin{lem} \label{lem:slice-sphere-conn}
    Let $W$ denote an even slice sphere of dimension $n$, and suppose that $X$ is regular slice $n$-connective. Then we have $[W, \Sigma X] = 0$.
\end{lem}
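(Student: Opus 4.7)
The plan is to split into cases according to which of the three types of even slice sphere $W$ is, since each case converts $[W,\Sigma X]=0$ into a different vanishing statement. The case $W = (C_p)_+ \otimes S^{2k}$, so $n=2k$, is immediate: by the induction–restriction adjunction, $[W,\Sigma X] \cong \pi^e_{2k-1}X$, which vanishes since $X^e$ is $(2k)$-connective. So the work is in the two representation-sphere cases, $W = S^{2k\rho}$ (with $n=2kp$) and $W = S^{2k\rho+1+\spoke}$ (with $n=2kp+2$).

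For these, I will squeeze $[W,\Sigma X]$ inside the long exact sequence induced by applying $[W,\Sigma(-)]$ to the isotropy separation cofiber sequence
\[(EC_p)_+ \otimes X \longrightarrow X \longrightarrow \widetilde{EC_p}\otimes X,\]
and argue that both flanking terms vanish. The two standard inputs I'll invoke are:
\textbf{(i)} smashing with $\widetilde{EC_p}$ inverts the Euler classes of the non-trivial irreducible $C_p$-representations, hence trivializes any representation sphere down to its fixed-point dimension, giving
$\pi^{C_p}_V(\widetilde{EC_p}\otimes X) \cong \pi_{|V^{C_p}|}(\gfp X)$;
and
\textbf{(ii)} smashing with $(EC_p)_+$ trivializes any representation sphere down to its underlying dimension, and combined with the Adams isomorphism $((EC_p)_+\otimes X)^{C_p} \simeq X^e_{hC_p}$ gives $\pi^{C_p}_V((EC_p)_+\otimes X) \cong \pi_{|V|}(X^e_{hC_p})$. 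In case (ii) I'll use that homotopy orbits preserve connectivity, so $X^e_{hC_p}$ inherits $(n)$-connectivity from $X^e$.

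For $W = S^{2k\rho}$, so $[W,\Sigma X] = \pi^{C_p}_{2k\rho-1}X$: the $\widetilde{EC_p}$-side is $\pi_{2k-1}(\gfp X)$, which vanishes because $\gfp X$ is $\lceil 2kp/p\rceil = 2k$-connective; the $(EC_p)_+$-side is $\pi_{2kp-1}(X^e_{hC_p})$, which vanishes because $X^e_{hC_p}$ is $(2kp)$-connective. For $W = S^{2k\rho + 1+\spoke}$, so $[W,\Sigma X] = \pi^{C_p}_{2k\rho+\spoke}X$: using $\gfp(S^{\spoke}) \simeq S^0$ (immediate from the defining cofiber sequence $(C_p)_+ \to S^0 \to S^{\spoke}$ and $\gfp((C_p)_+) = 0$) and $|S^{\spoke}|=1$, the $\widetilde{EC_p}$-side becomes $\pi_{2k}(\gfp X)$, which vanishes because $\gfp X$ is $\lceil(2kp+2)/p\rceil = (2k+1)$-connective; the $(EC_p)_+$-side becomes $\pi_{2kp+1}(X^e_{hC_p})$, which vanishes by $(2kp+2)$-connectivity.

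The proof is really bookkeeping once (i), (ii), and the value of $\gfp S^{\spoke}$ are in hand; the only thing I need to double-check carefully is that the fixed-point dimension of $2k\rho+\spoke$ is $2k$ (not $2k+1$), coming from $(S^{\spoke})^{C_p}\simeq \ast$ even though $\gfp S^{\spoke}\simeq S^0$. The main ``obstacle'' is simply keeping the three different dimension indexings (underlying, fixed-point, and the geometric-fixed-point shift contributed by the spoke) straight simultaneously; once that is done, each vanishing reduces to the very definition of regular slice $n$-connectivity.
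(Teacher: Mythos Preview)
Your argument is correct in outline and gives the result; the paper takes a shorter route, simply observing that each even slice sphere $W$ is a \emph{regular slice $n$-sphere} in the sense of \cite[\S2.1]{DWSlices} (its underlying spectrum is a wedge of $n$-spheres and $\gfp W$ is a $\lceil n/p\rceil$-sphere, or zero in the induced case) and then invoking \cite[Proposition~2.22]{DWSlices}. What you have written is essentially a direct unwinding of that cited proposition via the isotropy separation sequence, so the two approaches agree in substance; yours is more self-contained, the paper's more economical.

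Two small imprecisions are worth flagging. First, your input~(ii) is formulated for representation spheres $V$, but $S^{\spoke}$ is not one: its underlying spectrum is $\bigvee_{p-1} S^1$ with a nontrivial $C_p$-action, so $(EC_p)_+\otimes S^{\spoke}$ is \emph{not} equivalent to $(EC_p)_+\otimes S^1$, and the Borel term in the spoke case is not literally $\pi_{2kp+1}(X^e_{hC_p})$. The vanishing conclusion survives nonetheless: either feed the cofiber sequence $(C_p)_+\otimes S^{2k\rho}\to S^{2k\rho}\to S^{2k\rho+\spoke}$ into $[-,(EC_p)_+\otimes X]$ and use $(2kp+2)$-connectivity of $X^e$, or note that for free $Y$ and finite $A$ one has $[A,Y]^{C_p}\cong \pi_0\bigl((DA^e\otimes Y^e)_{hC_p}\bigr)$, which vanishes by connectivity since $A^e$ here is a wedge of $(2kp+1)$-spheres. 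Second, your parenthetical remark that $(S^{\spoke})^{C_p}\simeq *$ is not correct---the $C_p$-fixed points of the pointed space $S^{\spoke}$ are $S^0$---but this is harmless: what you actually use for~(i) is the geometric-fixed-point identification $\gfp S^{\spoke}\simeq S^0$, which you state correctly and which yields $\gfp(S^{2k\rho+\spoke})\simeq S^{2k}$ as needed.
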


\begin{proof}
    If $W$ is of dimension $n$, then its underlying spectrum $W^e$ is a direct sum of $n$-spheres and $\gfp W$ is a $\lceil \frac{n}{p} \rceil$-sphere. It therefore follows that $W$ is a regular slice $n$-sphere in the sense of \cite[\S 2.1]{DWSlices}, so the conclusion follows from \cite[Proposition 2.22]{DWSlices}.
\end{proof}

\begin{prop}\label{prop:cell-structure}
  Suppose that $X$ is a bounded below, homologically even $C_p$-equivariant spectrum with the property that $\Phi^{C_p} X$ is of finite type, so that there exists a splitting
  \[X \otimes \HZpu \simeq \bigoplus_{k \geq n} S_k \otimes \HZpu,\]
  where $S_k$ is a direct sum of $2k$-dimensional even slice spheres. Then $X$ admits a filtration $\{X_k\}_{k \geq n}$ such that $X_k / X_{k-1} \simeq S_k$ for each $k \geq n$.
    %with subquotients of the form
    %\[X_n / X_{n-1} \simeq \begin{cases} \bigoplus S^{2m\rho} \oplus \bigoplus ((C_p)_+ \otimes S^{2n}), & \text{if } n = mp \\ \bigoplus S^{2m\rho+1+\spoke} \oplus \bigoplus ((C_p)_+ \otimes S^{2n}), & \text{if } n = mp+1\\ \bigoplus ((C_p)_+ \otimes S^{2n}), &\text{otherwise}.  \end{cases}\]
\end{prop}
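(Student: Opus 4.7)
The plan is to build the filtration $\{X_k\}_{k \geq n}$ inductively, attaching a single $S_k$ at each stage. Set $X_{n-1} = 0$ and assume inductively that we have a cofiber sequence $X_{k-1} \to X \to Y_{k-1}$ together with a splitting $Y_{k-1} \otimes \HZpu \simeq \bigoplus_{j \geq k} S_j \otimes \HZpu$. Given a lift $f_k : S_k \to Y_{k-1}$ of the inclusion of the bottom summand (the key step, addressed below), I would define $Y_k := \cofiber(f_k)$ and $X_k := \fiber(X \to Y_k)$. The octahedral axiom for the composite $X_{k-1} \to X_k \to X$ produces a cofiber sequence $\cofiber(X_{k-1} \to X_k) \to Y_{k-1} \to Y_k$, and the rightmost map is by construction the quotient by $f_k$, so $\cofiber(X_{k-1} \to X_k) \simeq \fiber(Y_{k-1} \to Y_k) \simeq S_k$, yielding the required filtration piece.

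The heart of the argument is constructing $f_k$. By \Cref{lem:slice-HF-check} applied to $Y_{k-1}$ and its hypothesized splitting, $Y_{k-1}$ is regular slice $2k$-connective. The obstruction to lifting $S_k \to Y_{k-1} \otimes \HZpu$ through the unit $Y_{k-1} \to Y_{k-1} \otimes \HZpu$ lies in $[S_k, \Sigma(Y_{k-1} \otimes F)]$, where $F := \fiber(S^0 \to \HZpu)$. A direct long exact sequence calculation shows that $F^e$ is $1$-connective (the non-equivariant unit is an isomorphism on $\pi_0$) and that $\Phi^{C_p} F$ is $0$-connective (the geometric fixed point unit $\Z_{(p)} \to \F_p$ is surjective with non-trivial kernel), so $F$ is regular slice $0$-connective. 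Consequently $Y_{k-1} \otimes F$ is regular slice $2k$-connective, and since each summand of $S_k$ is a $2k$-dimensional even slice sphere, \Cref{lem:slice-sphere-conn} applied with $X = Y_{k-1} \otimes F$ yields $[S_k, \Sigma(Y_{k-1} \otimes F)] = 0$, producing the desired lift.

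To sustain the induction, I verify $Y_k \otimes \HZpu \simeq \bigoplus_{j > k} S_j \otimes \HZpu$. Smashing the cofiber sequence $S_k \xrightarrow{f_k} Y_{k-1} \to Y_k$ with $\HZpu$ gives a cofiber sequence of $\HZpu$-modules in which the first map is the $\HZpu$-linear extension of $f_k$. Since $f_k$ was chosen as a lift of the summand inclusion composed with the unit, this extension is exactly the summand inclusion, whose cofiber is the complementary summand. Finite type of $\Phi^{C_p} Y_k$ follows from finite type of $\Phi^{C_p} X$ and $\Phi^{C_p} X_k$, which is preserved inductively via the long exact sequence on geometric fixed points, so \Cref{lem:slice-HF-check} continues to apply. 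Finally, the filtration $\{X_k\}_{k \geq n}$ assembles to $X$ because the fibers $Y_k$ are regular slice $2(k+1)$-connective and hence arbitrarily highly connective in the limit, giving $\colim_k X_k \simeq X$.

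I expect the main subtlety to be the careful bookkeeping of regular slice connectivity in the obstruction step: \Cref{lem:slice-sphere-conn} requires the target to be a suspension $\Sigma X$ with $X$ regular slice $n$-connective rather than merely a spectrum that is regular slice $(n+1)$-connective, and $F$ itself is only regular slice $0$-connective due to its non-trivial $\pi_0$ on geometric fixed points. Fortunately the obstruction presents as $\Sigma(Y_{k-1} \otimes F)$, with the high regular slice connectivity of $Y_{k-1}$ absorbing any loss at $F$, so the lemma applies cleanly as stated.
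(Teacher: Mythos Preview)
Your proof is correct and follows essentially the same approach as the paper's: both arguments lift the bottom summand $S_k$ from $Y_{k-1} \otimes \HZpu$ to $Y_{k-1}$ by observing that the obstruction lives in $[S_k, \Sigma(F \otimes Y_{k-1})]$, which vanishes by \Cref{lem:slice-sphere-conn} once one checks that $F = \fib(S^0 \to \HZpu)$ is regular slice $0$-connective and invokes \Cref{lem:slice-HF-check}. The paper phrases the induction directly on $X$ (replacing $X$ by the cofiber of the lift at each stage) rather than tracking the pair $(X_k, Y_k)$, and leaves the octahedral bookkeeping and the convergence $\colim_k X_k \simeq X$ implicit, but the content is the same.
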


\begin{proof}
    %Suppose that $X$ admits a filtration $\{X_k\}_{k \geq n}$ satisfying $X_k / X_{k-1} \simeq S_k$, with $S_k$ a direct sum of $2k$-dimensional even slice spheres. Then $X \otimes \HFu$ admits a filtration with subquotients of the form $S_k \otimes \HFu$, so it suffices to show that any such filtration splits.
%
%    In other words, we need to show that all maps
%    \[S_k \to \Sigma S_m \otimes E,\]
%    where $k > m$, are automatically null. Enumerating through all of the possible even slice spheres that can appear in $S_k$ and $S_m$ and making use of the equivalence
%    \[\Ss^{\spoke} \otimes \Ss^{-\spoke} \simeq \Ss^0 \oplus \bigoplus_{p-2} (C_p)_+ \otimes \Ss^0,\]
%    we find that this is implied by the following facts:
%    \begin{align*}
%        \pi_{2k-1} \HFp = 0 \text{ for } k \geq 1 \\
%        \pi^{C_p} _{2k\rho-1} \HFu = 0 \text{ for } k \geq 1 \\
%        \pi^{C_p} _{2k\rho - 2 - \spoke} \HFu = 0 \text{ for } k \geq 1 \\
%        \pi^{C_p} _{2k\rho + \spoke} \HFu = 0 \text{ for } k \geq 0.
%    \end{align*}
%    \todo{Are the restrictions on $k$ necessary?}
%    \todo{Essentially because $\HFu$ is homotopically pure.}
%
    By assumption, we are given a splitting
    \[X \otimes \HZpu \simeq \bigoplus_{k \geq n} S_k \otimes \HZpu,\]
    where $S_k$ is a direct sum of $2k$-dimensional even slice spheres.
    By induction on $n$, it will suffice to show that the dashed lifting exists in the diagram
    \begin{center}
    \begin{tikzcd}
        & X \arrow[d] \\
        S_n \arrow[r] \arrow[ur, dashrightarrow] & \bigoplus_{k \geq n} S_k \otimes \HZpu \simeq X \otimes \HZpu,
    \end{tikzcd}
    \end{center}
    since the cofiber of any such lift is a bounded below homologically even $C_p$-spectrum with $\Phi^{C_p} X$ of finite type and whose $\HZpu$-homology is $\bigoplus_{k \geq n+1} S_k \otimes \HZpu$.

    Note that \Cref{lem:slice-HF-check} implies that $X$ is regular slice $2n$-connected. Let $F$ be the fiber of the Hurewicz map $S^0 \to \HZpu$. Then $F$ is easily seen to be regular slice $0$-connective,
    %by \cite[Remark 4.12]{HHR}
    so that $F \otimes X$ is regular slice $2n$-connective.
    This implies that $[S_n, \Sigma F \otimes X] = 0$ by \Cref{lem:slice-sphere-conn}.
    %, as $S_n$ is a direct sum of slice $2n$-spheres.
    The result now follows from the cofiber sequence
    \[X \to \HZpu \otimes X \to \Sigma F \otimes X. \qedhere\]
%
    %The converse will follow from.
%
    %To avoid repition, we will not prove the converse here. Instead, we note that it will follow from \Cref{prop:pure-splitting} once we see that $\HZu$ is homotopically pure (see \Cref{exm:hom-pure}).
    %Since all of the summands of $S_n$ are slice $n$-spheres, 
%
    %Let $C$ be the cofiber of the map $\Ss^0 \to \HZu$. Then the underlying spectrum of $C$ is , and $\gfp (C)$ is .
%
    %Then there exists a map
    %\[S_n \to X\]
    %such that the induced map
    %\[S_n \otimes \HZu \to X \otimes \HZu \simeq \bigoplus_{k \geq n} S_k \otimes \HZu \to S_n \otimes \HZu\]
    %is an equivalence.
\end{proof}

\begin{rmk}
  It will follow from \Cref{exm:hom-pure} and \Cref{prop:pure-splitting} that the following converse of \Cref{prop:cell-structure} holds: if $X$ is bounded below and admits an even slice cell structure, then $X$ is homologically even.
\end{rmk}

\subsection{Homotopical Evenness}

We now introduce the homotopical version of evenness.

\begin{dfn} \label{dfn:hmtpy-pure}
    We say that a $C_p$-equivariant spectrum $E$ is \textit{homotopically even} if the following conditions hold for all $n \in \Z$:
    \begin{enumerate}
        \item $\pi_{2n-1} ^{e} E = 0.$
        \item $\pi_{2n\rho-1} ^{C_p} E = 0.$
        \item $\pi_{2n\rho-2-\spoke} ^{C_p} E = 0.$
    \end{enumerate}
\end{dfn}

\begin{rmk}
  All of the examples of homotopically even $C_p$-spectra that we will encouter will satisfy the following condition for all $n \in \Z$:
  \begin{enumerate}
      \setcounter{enumi}{3}
    \item $\pi_{2n\rho+\spoke} ^{C_p} E = 0.$
  \end{enumerate}
  We will say that a homotopically even $C_p$-spectrum \emph{satisfies condition (4)} if this holds.

  In fact, the examples which we study satisfy even stronger evenness properties. We have chosen the weakest possible set of properties for which our theorems hold.
\end{rmk}

\begin{rmk} \label{rmk:hmtp-pure-rephrase}
    If we assume condition (1), then we may rewrite conditions (3) and (4) as follows:
    \begin{enumerate}
        \item[($3'$)] the transfer maps $\pi^{e} _{2np-2} E \to \pi^{C_p} _{2n\rho-2} E$ are surjective for all $n \in \Z$.
        \item[($4'$)] the restriction maps $\pi^{C_p} _{2n\rho} E \to \pi^{e} _{2np} E$ are injective for all $n \in \Z$.
    \end{enumerate}
    This follows directly from the cofiber sequences defining $S^{-\spoke}$ and $S^\spoke$:
    \[S^{-\spoke} \to S^0 \xrightarrow{\mathrm{tr}} (C_p)_+ \otimes S^{0}\]
    \[(C_p)_+ \otimes S^{0} \xrightarrow{\mathrm{res}} S^0 \to S^{\spoke}.\]
\end{rmk}

\begin{rmk} \label{rmk:slice-comment}
Conditions $(1)$-$(4)$ have some implications for the slice tower of any homotopically even $E$, which can be read off from \cite{HY} (cf., \cite[3.5]{DWSlices}).  First, conditions $(1)$ and $(2)$ together imply that slices in degrees $2np-1$ are trivial.  Secondly, condition $(4)$ implies that the $(2np)$th slice is the zero-slice determined by the Mackey functor $\pi_{2n\rho}$.  However, when $p>2$ the implication of $(3)$ for slices is obscure, and many slices are unconstrained.
\end{rmk}

\begin{rmk}
    If $p=2$, \Cref{dfn:hmtpy-pure} reduces to the requirement that, for all $n \in \Z$:
    \begin{enumerate}
        \item $\pi_{2n-1} ^{e} E = 0.$
        \item $\pi_{n\rho-1} ^{C_2} E = 0.$
    \end{enumerate}
    A $C_2$-equivariant spectrum is therefore homotopically even if and only if it is \emph{even} in the sense of \cite[Definition 3.1]{HillMeier}.
    Moreover, condition (4) is redundant in the $C_2$-equivariant setting.
\end{rmk}

\begin{exm} \label{exm:hom-pure}
  The Eilenberg--Maclane spectra $\HFu$ and $\HZpu$ are examples of homotopically even $C_p$-spectra which satisfy condition (4). To verify this, we refer to the reader to the appendix of third author's thesis \cite[\S A]{DWThesis}, where one may find a computation of the spoke graded homotopy groups of $\HFu$ and $\HZpu$.

    At the prime $p=2$, there are many examples of homotopically even $C_2$-spectra in the literature, such as $\MU_\R, \BP_\R, \BPn_\R, E(n)_\R$, $K(n)_\R$ and $E_n$, where $E_n$ is equipped with the Goerss-Hopkins $C_2$-action \cite{HillMeier,RealOrEn}.

    The main result of \Cref{sec:Hmtpy-Purity} is that the $C_p$-spectra $E_{p-1}$ and the $C_3$-spectrum $\tmf(2)$ are homotopically even and satisfy condition (4).
    %We believe that $E_{n(p-1)}$ are homotopically even for all $n \geq 1$, but the methods of proof of \Cref{sec:Hmtpy-Purity} only apply to $C_p$-spectra whose $C_p$-equivariant $RO(C_p)$-graded homotopy groups we understand well, hence do not at this time apply to $E_{n(p-1)}$ for $n > 1$ when $p$ is odd.
\end{exm}

When trying to map a bounded below homologically even $C_p$-spectrum into a homotopically even $C_p$-spectrum, there are no obstructions:

\begin{prop} \label{prop:even-unob}
  Let $E$ be a homotopically even $C_p$-spectrum, and suppose that $X$ is a $C_p$-spectrum equipped with a bounded below filtration $\{X_k\}_{k \geq n}$ such that each $S_k \coloneqq X_k / X_{k-1}$ is a direct sum of $2k$-dimensional even slice spheres.

  Then, for any $k \geq n$, every $C_p$-equivariant map $X_k \to E$ extends to an equivariant map $X \to E$.
\end{prop}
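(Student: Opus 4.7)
The plan is an inductive obstruction-theoretic argument along the filtration $\{X_j\}_{j \ge k}$ followed by passage to the colimit. Given an equivariant extension $f_j \colon X_j \to E$, the cofiber sequence $X_j \to X_{j+1} \to S_{j+1}$ induces an exact sequence
\[
[X_{j+1}, E] \longrightarrow [X_j, E] \longrightarrow [S_{j+1}, \Sigma E],
\]
so $f_j$ extends to $X_{j+1}$ precisely when its image on the right vanishes. Since $S_{j+1}$ is by hypothesis a direct sum of $2(j+1)$-dimensional even slice spheres, it suffices to show that $[W, \Sigma E] = 0$ for every such $W$.

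I would then check this vanishing for each of the three shapes of even slice sphere in \Cref{dfn:even-slice-sphere}. For $W = (C_p)_+ \otimes S^{2m}$ one has $[W, \Sigma E] \cong \pi^e_{2m-1} E$, which vanishes by condition (1); for $W = S^{2m\rho}$ one has $\pi^{C_p}_{2m\rho - 1} E$, which vanishes by condition (2). The remaining spoke case $W = S^{2m\rho + 1 + \spoke}$ requires a short auxiliary calculation: tensoring the defining cofiber sequence $(C_p)_+ \to S^0 \to S^\spoke$ with $S^{2m\rho + 1}$ and mapping into $\Sigma E$ yields a long exact sequence which, after using (1) to kill a $\pi^e$-term, identifies $[W, \Sigma E]$ with the kernel of the restriction $\pi^{C_p}_{2m\rho} E \to \pi^e_{2mp} E$. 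A second application of the same cofiber sequence realizes this kernel as the image of a spoke-graded homotopy group of $E$, whose vanishing is exactly the content of the homotopical-evenness hypothesis on $E$.

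To assemble the compatible family $\{f_j\}_{j \ge k}$ into a single map $X = \colim_j X_j \to E$, I would use the equivalence $\Map(X, E) \simeq \lim_j \Map(X_j, E)$ together with the vanishing of the relevant $\lim^1$ term; the latter follows by running the same case analysis one suspension higher. The principal subtlety will be the spoke step: unlike the two representation-sphere cases, the vanishing of $[S^{2m\rho + 1 + \spoke}, \Sigma E]$ is not a direct restatement of any single clause of homotopical evenness, and instead requires diagram-chasing the cofiber sequence defining $S^\spoke$ in combination with the restriction/transfer long exact sequences on $E$.
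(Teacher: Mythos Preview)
Your inductive obstruction argument is exactly the paper's approach; the paper simply compresses it to one line, asserting that the vanishing of $[\Sigma^{-1} W, E]$ for each even slice sphere $W$ ``follows precisely from the definition of homotopical evenness.''

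There is, however, a genuine gap in your spoke-sphere step, and the paper's one-line proof shares it. For $W = S^{2m\rho+1+\spoke}$ the obstruction is $\pi^{C_p}_{2m\rho+\spoke} E$, which (after using condition (1)) you correctly identify with $\ker\bigl(\mathrm{res}\colon \pi^{C_p}_{2m\rho} E \to \pi^e_{2mp} E\bigr)$. But the vanishing of that kernel is precisely condition (4) (equivalently $(4')$ in \Cref{rmk:hmtp-pure-rephrase}), which the paper explicitly \emph{excludes} from the definition of homotopical evenness. Your ``second application of the same cofiber sequence'' only re-expresses this kernel as the image of $\pi^{C_p}_{2m\rho+\spoke} E$, the very group you are trying to kill, so the argument is circular. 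Condition (3), the vanishing of $\pi^{C_p}_{2n\rho-2-\spoke} E$, is a genuinely different statement for $p>2$: the spectra $S^{2m\rho+\spoke}$ and $S^{2n\rho-2-\spoke}$ have underlying dimensions $2mp+1$ and $2np-3$ respectively and are never equivalent. (At $p=2$ one has $S^{\spoke}=S^{\sigma}$ and conditions (3) and (4) collapse into one another, which is why the issue is invisible there.) In short, for $p>2$ the spoke case requires condition (4); this is harmless in the paper's applications since every example treated satisfies (4), but your write-up should not claim the vanishing follows from (1)--(3) alone.

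One small correction to your last paragraph: the $\lim^1$ step is unnecessary, because the Milnor exact sequence already makes $[X,E] \to \lim_j [X_j,E]$ surjective, and surjectivity is all that existence of the extension requires. Your sketched proof of the $\lim^1$ vanishing is also not right: ``the same case analysis one suspension higher'' would demand $[S_k, E]=0$, and groups such as $\pi^{C_p}_{2n\rho} E$ are typically nonzero.
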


\begin{proof}
  It suffices to prove by induction that any map $X_k \to E$ extends to a map $X_{k+1} \to E$.
  Using the cofiber sequence
  \[\Sigma^{-1} S_{k+1} \to X_k \to X_{k+1},\]
  we just need to know that any map from the desuspension of an even slice sphere into $E$ is nullhomotopic.
  This follows precisely from the definition of homotopical evenness.
\end{proof}

If $E$ further satisfies condition (4), we have the stronger result:
%A very useful fact in non-equivariant homotopy theory is that if $X$ has bounded below even homology and $E$ has even homotopy, then the cellular filtration of $X$ splits upon tensoring with $E$.
%We have chosen our definition of homotopical evenness so that the analogous fact holds, as the following proposition shows.

\begin{prop} \label{prop:pure-splitting}
  Let $E$ be a homotopically even $C_p$-ring spectrum which satisfies condition (4), and suppose that
    %$X$ be a bounded below homologically even $C_p$-spectrum, so that
    % Then we may write
    %\[X \otimes \HFu \simeq \bigoplus_{k \geq n} S_k \otimes \HFu,\]
    %with $S_k$ a direct sum of $2k$-dimensional even slice spheres, or equivalently that 
    $X$ is a $C_p$-spectrum equipped with a bounded below filtration $\{X_k\}_{k \geq n}$ such that each $S_k \coloneqq X_k / X_{k-1}$ is a direct sum of $2k$-dimensional even slice spheres.

    Then there is a splitting of the induced filtration on $X \otimes E$ by $E$-modules:
    \[X \otimes E \simeq \bigoplus_{k \geq n} S_k \otimes E.\]
\end{prop}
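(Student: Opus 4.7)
The plan is to split the induced filtration on $X \otimes E$ one piece at a time. By induction on $k$ (base case $X_{n-1} = 0$, so $X_n \simeq S_n$), I will construct $E$-module equivalences
$$\phi_k \colon \bigoplus_{j=n}^{k} S_j \otimes E \xrightarrow{\sim} X_k \otimes E$$
compatible with the structure maps of the filtration: $\phi_k$ extends $\phi_{k-1}$ along the natural inclusion on the first $k-1$ summands via a chosen section $s_k \colon S_k \otimes E \to X_k \otimes E$ of the quotient on the last summand. Because $\otimes E$ commutes with colimits and $X = \colim_k X_k$, passing to the colimit yields the equivalence $X \otimes E \simeq \bigoplus_{k \geq n} S_k \otimes E$.

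The inductive step reduces to producing the section $s_k$, equivalently showing that the connecting map $\delta \colon S_k \otimes E \to \Sigma X_{k-1} \otimes E$ is null as a map of $E$-modules. Since $S_k \otimes E$ is the free $E$-module on $S_k$, such $E$-module maps are determined by their underlying composite $S_k \to S_k \otimes E \xrightarrow{\delta} \Sigma X_{k-1} \otimes E$, and their vanishing is implied by $[\Sigma^{-1} W, X_{k-1} \otimes E] = 0$ for each $2k$-dimensional even slice sphere $W$ appearing as a summand of $S_k$. Applying the cofiber sequences $X_{i-1} \otimes E \to X_i \otimes E \to S_i \otimes E$ for $n \leq i < k$, a devissage via the resulting long exact sequences (base case $X_{n-1} \otimes E = 0$) reduces the desired vanishing to $[\Sigma^{-1} W, S_j \otimes E] = 0$ for each $j < k$.

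This last vanishing is a direct case check: each such group unfolds into an equivariant homotopy group of $E$ in one of exactly four families of representations, precisely those excluded by homotopical evenness. Any case in which an induced summand $(C_p)_+ \otimes S^{2m}$ appears in either $W$ or $S_j$ reduces, via Frobenius reciprocity, to $\pi^e_{\mathrm{odd}}(E)$, vanishing by (1). Pairing $W = S^{2n\rho}$ (so $k = np$) with a summand $S^{2m\rho}$ of $S_j$ (where $m < n$ since $j = mp < k$) gives $\pi^{C_p}_{2(n-m)\rho - 1}(E)$, vanishing by (2); pairing it with $S^{2m\rho + 1 + \spoke}$ gives $\pi^{C_p}_{2(n-m)\rho - 2 - \spoke}(E)$, vanishing by (3). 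Pairing $W = S^{2n\rho + 1 + \spoke}$ (so $k = np+1$) with $S^{2m\rho}$ gives $\pi^{C_p}_{2(n-m)\rho + \spoke}(E)$, vanishing by (4); and pairing it with $S^{2m\rho + 1 + \spoke}$ returns $\pi^{C_p}_{2(n-m)\rho - 1}(E)$ and again (2). There is no single conceptual difficulty; the essential content lies in matching representations so that precisely these four families of homotopy groups arise, which is exactly what the definition of homotopical evenness (with condition (4)) is designed to accomplish.
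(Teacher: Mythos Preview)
Your proof is correct and follows essentially the same approach as the paper's: reduce by induction to showing $[S_k, \Sigma S_j \otimes E] = 0$ for $j < k$, then check cases against conditions (1)--(4). One small imprecision: in your final case (pairing $S^{2n\rho+1+\spoke}$ with $S^{2m\rho+1+\spoke}$) the relevant group is not literally $\pi^{C_p}_{2(n-m)\rho-1}(E)$, since one must use the splitting $S^{\spoke} \otimes S^{-\spoke} \simeq S^0 \oplus \bigoplus_{p-2}(C_p)_+ \otimes S^0$ that the paper invokes explicitly---the extra induced summands contribute $\pi^e_{\mathrm{odd}}(E)$ and vanish by (1), so your conclusion stands.
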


\begin{proof}
    We need to show that the filtration $\{X_k\}_{k \geq n}$ splits upon smashing with $E$.
    %This follows from the hypothesis that $E$ is homotopically pure exactly as in \Cref{prop:cell-structure}.
    Working by induction, we see that it suffices to show that all maps
    \[S_k \to \Sigma S_m \otimes E,\]
    where $k > m$, are automatically null.
    Enumerating through all of the possible even slice spheres that can appear in $S_k$ and $S_m$, and making use of the (non-canonical) equivalence
    \[S^{\spoke} \otimes S^{-\spoke} \simeq S^0 \oplus \bigoplus_{p-2} \left( (C_p)_+ \otimes S^0 \right),\]
    we find that this follows precisely from the hypothesis that $E$ is homotopically even and satisfies condition (4).
\end{proof}

\section{The homological evenness of $\Cmu$} \label{sec:Cmu-Purity}
%The main theorem in this section is the homological evenness of $\Sigma^\infty \Cmu$:
The main goal of this section is to prove the following theorem:

\begin{thm}\label{thm:CPmup-pure}
    The $C_p$-spectrum $\Sigma^\infty \Cmu$ is homologically even.
\end{thm}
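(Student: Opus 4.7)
The plan is to leverage \Cref{cor:uZ-CPmup}, which identifies $\HZpu^{(\Cmu)_+} \simeq \HZpu[S^{-1-\spoke}]$, and to decompose the right-hand side as an explicit direct sum of dual even slice spheres smashed with $\HZpu$; dualizing over $\HZpu$ then yields the sought even slice splitting of $\Sigma^\infty \Cmu \otimes \HZpu$.

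First I would unpack $S^0[S^{-1-\spoke}] = \NCp S^0[S^{-2}] \otimes_{S^0[S^{-2}]} S^0$ as the cofiber of the $y$-multiplication on $\NCp S^0[S^{-2}]$, where $y$ is the image of the generator under the composite $S^{-2} \to (C_p)_+ \otimes S^{-2} \to \NCp S^0[S^{-2}]$. The source $\NCp S^0[S^{-2}] = (S^0[S^{-2}])^{\otimes p}$ with cyclic action decomposes as a direct sum over $C_p$-orbits of $\mathbb{Z}_{\geq 0}^p$: the diagonal orbit $(n,\ldots,n)$ contributes a summand $S^{-2n\rho}$, and each free orbit $[(n_1,\ldots,n_p)]$ contributes $(C_p)_+ \otimes S^{-2\sum n_i}$. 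Thus $\NCp S^0[S^{-2}]$ is already a direct sum of dual even slice spheres.

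The key step is then to analyze the $y$-multiplication. Under the ring isomorphism $\pi^e_*(\NCp S^0[S^{-2}]) \cong \mathbb{Z}[e_1,\ldots,e_p]$, the class $y$ corresponds to $\sum_i e_i$. On each diagonal summand $S^{-2n\rho}$, the map $y\cdot$ factors through the Wirthm\"uller unit $\eta \colon S^{-2-2n\rho} \to (C_p)_+ \otimes S^{-2(np+1)}$; by rotating the defining sequence $(C_p)_+ \to S^0 \to S^{\spoke}$ to $S^{-\spoke} \to S^0 \to (C_p)_+$ and tensoring with $S^{-2n\rho}$, the cofiber of $\eta$ is identified with the dual spoke sphere $S^{-2n\rho-1-\spoke}$. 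A combinatorial analysis of the $y$-action on free summands --- where $y \cdot [(n_1,\ldots,n_p)] = \sum_i [(n_1,\ldots,n_i+1,\ldots,n_p)]$ distributes over neighboring orbits --- shows that the orbits representable with some $n_i=0$ survive as induced summands $(C_p)_+ \otimes S^{-2k}$ in the cofiber. Assembling, $S^0[S^{-1-\spoke}]$ splits as a direct sum of dual spoke spheres, trivial representation spheres, and induced cells, with the multiplicities verifiable against the Eilenberg--Moore computation $H^*(\Cmu; \mathbb{Z}_{(p)}) \cong \mathbb{Z}_{(p)}[e_1,\ldots,e_p]/(\sum e_i)$ on underlying spectra and against $H^*(BC_p; \HFp)$ on geometric fixed points.

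Finally, since $\Cmu$ is bounded below and $\HZpu^{(\Cmu)_+}$ is of finite type in each degree, $\HZpu$-linear duality converts the splitting of $\HZpu^{(\Cmu)_+}$ as a direct product of dual even slice spheres into a splitting of $\Sigma^\infty \Cmu \otimes \HZpu$ as a direct sum of even slice spheres smashed with $\HZpu$, establishing homological evenness. The main obstacle is the combinatorial step in the third paragraph; an alternative approach would construct an even slice cell filtration of $\Sigma^\infty \Cmu$ directly via finite-dimensional approximations, with base case $\Sigma^\infty \Cmuo = S^{1+\spoke}$ and inductive step showing that each subquotient is a direct sum of even slice spheres, and then apply \Cref{prop:pure-splitting} with $E = \HZpu$.
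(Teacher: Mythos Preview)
Your overall strategy---use \Cref{cor:uZ-CPmup} to reduce to splitting $S^0[S^{-1-\spoke}]$ into dual even slice spheres, then dualize---is exactly the paper's. The gap is in your third paragraph: the ``combinatorial analysis of the $y$-action'' is not a proof, and the difficulty is real. Multiplication by $y=\sum e_i$ does not respect the orbit decomposition of $\NCp S^0[S^{-2}]$ in any simple way. A free orbit can hit a diagonal one (e.g.\ $(0,1,\ldots,1)\mapsto(1,1,\ldots,1)$), and a diagonal one maps into a free one, so you cannot take the cofiber summand by summand. Your claim that ``orbits with some $n_i=0$ survive as induced summands'' is not even $C_p$-equivariant as stated, and what actually survives in the cokernel $\Sym^*(\bar\rho)$ is governed by a nontrivial representation-theoretic fact (the Almkvist--Fossum decomposition of $\Sym^k_{\F_p}(\bar\rho)$), which your sketch does not invoke.

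The paper avoids the cofiber-decomposition problem entirely: rather than trying to split the cofiber sequence, it builds explicit maps $\Nm(x)^k\cdot\wt{x}^{\varepsilon}$ from dual slice spheres into $S^0[S^{-1-\spoke}]$, checks directly that their sum is an equivalence on $\Phi^{C_p}$, and then uses Almkvist--Fossum to see that on $\Phi^e$ the image together with free summands exhausts $\Sym^*_{\F_p}(\bar\rho)$; the remaining free pieces are filled in by hand via the surjectivity of the Hurewicz map. This is the substance your combinatorial step is missing. Your alternative approach via a direct cell filtration of $\Cmu$ would also work in principle, but constructing that filtration is precisely the hard part---the paper in fact deduces the filtration \emph{from} homological evenness via \Cref{prop:cell-structure}, not the other way around.
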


Noting that $\Phi^{C_p} \Sigma^\infty \Cmu = \Sigma^\infty BC_p$ is of finite type, we may apply \Cref{prop:cell-structure} and so deduce the following corollary:
%implies that $\Cmu$ admits a cell decomposition by even slice spheres.

\begin{cor} \label{cor:CPmup-cells}
    There is a filtration $\{\Sigma^\infty \CP^{n} _{\mu_p}\}_{n \geq 0}$ of $\Sigma^\infty \Cmu$ with subquotients as follows
    \[\Sigma^\infty \Cmun / \Sigma^\infty \CP^{n-1} _{\mu_p} \simeq \begin{cases} S^{2m\rho} \oplus \bigoplus ((C_p)_+ \otimes S^{2n}), & \text{if } n = mp \\ S^{2m\rho+1+\spoke} \oplus \bigoplus ((C_p)_+ \otimes S^{2n}), & \text{if } n = mp+1\\ \bigoplus ((C_p)_+ \otimes S^{2n}), &\text{otherwise}.  \end{cases}\]
\end{cor}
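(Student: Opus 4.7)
My approach is to combine \Cref{thm:CPmup-pure} with \Cref{prop:cell-structure} and then identify the resulting abstract filtration with the natural one. First I verify that $\Sigma^{\infty}\Cmu$ satisfies the hypotheses of \Cref{prop:cell-structure}: it is connective (hence bounded below), and its geometric fixed points $\Phi^{C_p}\Sigma^{\infty}\Cmu \simeq \Sigma^{\infty}BC_p$ are of finite type. Together with the homological evenness provided by \Cref{thm:CPmup-pure}, \Cref{prop:cell-structure} furnishes some filtration of $\Sigma^{\infty}\Cmu$ whose $n$-th subquotient $S_n$ is a direct sum of $2n$-dimensional even slice spheres.

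Next I identify each $S_n$ up to isomorphism by a homological count. The underlying spectrum of $\Cmu$ is equivalent to $(\CP^{\infty})^{\times(p-1)}$, with integral homology $\Z_{(p)}[x_1,\dots,x_{p-1}]$ on classes of degree $2$, and the $C_p$-action permutes the generators cyclically (via the trivialization of the tensor product of line bundles). After tensoring with $\HZpu$ and passing to geometric fixed points, we have $\Phi^{C_p}(\Sigma^{\infty}\Cmu \otimes \HZpu) \simeq \Sigma^{\infty}BC_p \otimes \HFp[y]$, which is free over $\HFp[y]$ on one generator in each non-negative degree. Since $\Phi^{C_p}((C_p)_+ \otimes S^{2n}) \simeq 0$, while $\Phi^{C_p}(S^{2m\rho}) \simeq S^{2m}$ and $\Phi^{C_p}(S^{2m\rho+1+\spoke}) \simeq S^{2m+1}$, counting fixed-point cells in each filtration degree forces exactly one $S^{2m\rho}$ summand in $S_{mp}$ and one $S^{2m\rho+1+\spoke}$ summand in $S_{mp+1}$, with only free summands otherwise. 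Subtracting these from the total underlying rank in each degree then pins down the number of $(C_p)_+ \otimes S^{2n}$ summands.

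The main obstacle is matching the abstract filtration produced by \Cref{prop:cell-structure} with the natural filtration $\{\Cmun\}$, where $\Cmun$ is defined as the pullback of the fiber sequence for $\Cmu$ along $(\CP^n)^{\times p} \hookrightarrow (\CP^{\infty})^{\times p}$. \Cref{prop:cell-structure} produces cell-level lifts only after a non-canonical choice, so one must check compatibility with the given inclusions $\Cmun \hookrightarrow \Cmu$. I would handle this by applying the obstruction-theoretic argument in the proof of \Cref{prop:cell-structure} directly to each cofiber $\Sigma^{\infty}\Cmun/\Sigma^{\infty}\CP^{n-1}_{\mu_p}$: once one verifies that these cofibers have the homology dictated by the step above, their bounded-belowness and finite-type fixed points let the same argument run and produce the asserted equivalence with a direct sum of even slice spheres. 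The verification that the natural filtration $\{\Cmun\}$ has the right homology requires a separate calculation tracing through the defining fiber square, which I expect to be the most delicate computational step.
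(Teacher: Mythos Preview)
Your first two paragraphs are essentially the paper's proof: apply \Cref{prop:cell-structure} (after noting $\Phi^{C_p}\Sigma^\infty\Cmu \simeq \Sigma^\infty BC_p$ is of finite type) to the homologically even $\Sigma^\infty\Cmu$, then identify the slice spheres appearing in each $S_n$. The paper does the identification via \Cref{cor:uZ-CPmup}, \Cref{lem:hom-sym}, and \Cref{prop:sym-rhobar}, which together describe $H^e_*(\Cmu;\Z_{(p)})$ as $\Sym^*_{\Z_{(p)}}(\rhobar)$ and decompose each symmetric power as a $C_p$-representation; your geometric-fixed-point count reaches the same conclusion. (One small correction: the $C_p$-action on the degree-$2$ classes is the reduced regular representation $\rhobar$, not a cyclic permutation of $p-1$ generators.)

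Your third paragraph, however, is attacking a non-problem. The paper does \emph{not} claim that the filtration agrees with any pre-existing space-level filtration; the Warning immediately following the corollary says explicitly that ``$\Sigma^\infty\Cmun$'' is merely a name for the $n$th stage of the abstract filtration produced by \Cref{prop:cell-structure}, and that they do not prove it is $\Sigma^\infty$ of a $C_p$-space. So the ``main obstacle'' you identify and the ``most delicate computational step'' you anticipate are simply not part of what is being asserted. Once you drop that paragraph, your argument is complete and matches the paper's.
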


\begin{wrn}
  We believe that there is a filtration $\{\Cmun\}_{n \geq 0}$ of the space $\Cmu$ that recovers $\{\Sigma^\infty \CP^{n} _{\mu_p}\}_{n \geq 0}$ upon applying $\Sigma^\infty$, but we do not prove this here. As such, our name $\Sigma^\infty \Cmun$ must be regarded as an abuse of notation: we do not prove that $\Sigma^{\infty} \Cmun$ is $\Sigma^\infty$ of a $C_p$-space $\Cmun$.  In light of the Dold-Thom theorem, it seems likely that the space $\Cmun$ could be defined as the $n$th symmetric power of $S^{1+\Yright}$.
\end{wrn}

\begin{rmk}
    The identification of the particular even slice spheres appearing in this decomposition is determined by the cohomology of $\Cmu$ as a $C_p$-representation, and in particular from the combination of \Cref{cor:uZ-CPmup}, \Cref{lem:hom-sym} and \Cref{prop:sym-rhobar}.
\end{rmk}

As an application, we obtain the following analog of the fact that any ring spectrum with homotopy groups concentrated in even degrees admits a complex orientation:

\begin{cor} \label{cor:pure-oriented}
    Let $E$ be a homotopically even $C_p$-ring spectrum. Then $E$ is $\mu_p$-orientable.
\end{cor}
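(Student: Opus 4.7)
The plan is to construct a $\mu_p$-orientation of $E$, i.e., a map $\Sigma^{\infty}\Cmu \to \Sigma^{1+\spoke} E$ whose composition with the inclusion of the bottom cell $S^{1+\spoke} \hookrightarrow \Sigma^{\infty}\Cmu$ is the $\Sigma^{1+\spoke}$-suspension of the unit, via obstruction theory along the even slice cell filtration of $\Sigma^{\infty}\Cmu$ furnished by \Cref{cor:CPmup-cells}.

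To set up the induction, I would first invoke \Cref{thm:CPmup-pure} together with \Cref{cor:CPmup-cells}---applicable since $\Phi^{C_p}\Sigma^{\infty}\Cmu \simeq \Sigma^{\infty}BC_p$ is of finite type---to present $\Sigma^{\infty}\Cmu$ as the colimit of a filtration $\{X_n\}_{n \geq 0}$ whose $n$-th subquotient $S_n \coloneqq X_n / X_{n-1}$ is a direct sum of $2n$-dimensional even slice spheres, with $S^{1+\spoke}$ occurring as a summand of $S_1$. Combining the $\Sigma^{1+\spoke}$-suspended unit on that summand with the zero map on the remaining free even slice summands supplies a starting map $X_1 \to \Sigma^{1+\spoke} E$ that satisfies the required restriction on the bottom cell.

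I would then extend this map inductively up the filtration. The obstruction to extending a given $X_n \to \Sigma^{1+\spoke} E$ across the cofiber sequence $X_n \to X_{n+1} \to S_{n+1}$ lies in $[\Sigma^{-1} S_{n+1}, \Sigma^{1+\spoke} E]$, and because $S_{n+1}$ is a direct sum it suffices to verify that this group vanishes for each individual even slice sphere $W$ of dimension $2k$ occurring as a summand. The three types of $W$ allowed by \Cref{dfn:even-slice-sphere} correspond precisely to the three vanishing conditions of \Cref{dfn:hmtpy-pure}: for $W = (C_p)_+ \otimes S^{2k}$ the obstruction group is $\pi^e_{2k-1}(\Sigma^{1+\spoke}E) \cong \bigoplus^{p-1}\pi^e_{2k-3}(E) = 0$ by condition (1); for $W = S^{2k\rho}$ it is $\pi^{C_p}_{2k\rho-2-\spoke}(E) = 0$ by condition (3); and for $W = S^{2k\rho+1+\spoke}$ it is $\pi^{C_p}_{2k\rho-1}(E) = 0$ by condition (2). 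Passing to the colimit over $n$ yields the desired $\mu_p$-orientation.

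If there is any genuine obstacle, it is purely the bookkeeping match in the previous paragraph; all the essential homotopy-theoretic content was already contributed by \Cref{thm:CPmup-pure}, which produces the even slice cell structure on $\Sigma^{\infty}\Cmu$. Indeed, \Cref{dfn:hmtpy-pure} was evidently formulated with exactly these three obstruction groups in mind, so the matching is essentially tautological.
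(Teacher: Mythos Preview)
Your approach is essentially the paper's: the paper simply invokes \Cref{cor:CPmup-cells} together with \Cref{prop:even-unob}, and you have unpacked the latter's obstruction-theory argument inline with the target $\Sigma^{1+\spoke}E$ in place of $E$.

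One small imprecision: in the case $W=S^{2k\rho+1+\spoke}$ you write that the obstruction group \emph{is} $\pi^{C_p}_{2k\rho-1}(E)$. The sphere $S^{\spoke}$ is not invertible, so $[S^{2k\rho+\spoke},\Sigma^{1+\spoke}E]$ does not reduce to a single homotopy group by cancellation. Using the splitting $S^{\spoke}\otimes S^{-\spoke}\simeq S^{0}\oplus\bigoplus_{p-2}(C_p)_+$ one finds
\[
[S^{2k\rho+\spoke},\Sigma^{1+\spoke}E]\;\cong\;\pi^{C_p}_{2k\rho-1}(E)\ \oplus\ \bigoplus_{p-2}\pi^{e}_{2kp-1}(E),
\]
and the extra free summands vanish by condition~(1). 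So the conclusion stands, but you should record that additional piece.
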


\begin{proof}
    We wish to show that that the $(1+\spoke)$-suspension of the unit map factors as
    \[S^{1+\spoke} \to \Sigma^{\infty} \Cmu \to \Sigma^{1+\spoke} E.\]
  This is an immediate consequence of \Cref{cor:CPmup-cells} and \Cref{prop:even-unob}.
    %Equivalently, we wish to show that there is a factorization of $E$-module maps
    %\[\Sigma^{1+\spoke} E \to \Sigma^{\infty} \Cmu \otimes E \to \Sigma^{1+\spoke} E.\]
    %By \Cref{prop:pure-splitting} and \Cref{cor:CPmup-cells}, we find that
    %\[\Sigma^\infty \Cmu \otimes E \simeq \bigoplus_{k \geq 1} S_k \otimes E\]
    %as $E$-modules, where $S_k$ is a direct sum of even slice $2k$-spheres. Furthermore, the inclusion
    %\[S_1 \otimes E \hookrightarrow \bigoplus_{k \geq 1} S_k \otimes E \simeq \Sigma^\infty \Cmu \otimes E\]
    %is just the inclusion of the bottom cell
    %\[\Sigma^{1+\spoke} E \simeq \Sigma^{\infty} \Cmuo \otimes E \to \Sigma^{\infty} \Cmu \otimes E.\]
    %It follows that the projection map
    %\[\Sigma^\infty \Cmu \otimes E \simeq \bigoplus_{k \geq 1} S_k \otimes E \to S_1 \otimes E \simeq \Sigma^{1+\spoke} E\]
    %provides the desired factorization.
\end{proof}

We devote the remainder of the section to the proof of \Cref{thm:CPmup-pure}.
By \Cref{cor:uZ-CPmup}, there is an equivalence
\[\HZu[S^{-1-\spoke}] \simeq \HZu^{\mathbb{CP}^{\infty} _{\mu_p+}}.\]
This is of finite type, so to prove \Cref{thm:CPmup-pure} it will suffice to prove the following theorem and dualize:

\begin{thm} \label{thm:spoke-alg-splitting}
    As a $C_p$-equivariant spectrum, $S^0 [S^{2n\rho-1-\spoke}]$ is a direct sum of dual even slice spheres for all $n \in \Z$.
\end{thm}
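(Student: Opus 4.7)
The plan is to analyze the defining cofiber sequence
\[\Sigma^{2n\rho-2}R \xrightarrow{\cdot t} R \to S^0[S^{2n\rho-1-\spoke}],\]
where $R := \NCp(S^0[S^{2np-2}])$. First, since $\NCp$ is symmetric monoidal and $S^0[S^{2np-2}] \simeq \bigoplus_{c \ge 0} S^{c(2np-2)}$ as underlying spectra, expanding the $p$-fold indexed smash product yields an explicit direct sum decomposition of $R$ as a $C_p$-spectrum:
\[R \simeq \bigoplus_{(c_1,\ldots,c_p)\in\mathbb{N}^p} S^{(\sum_i c_i)(2np-2)},\]
with $C_p$ cyclically permuting multi-indices. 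Collecting $C_p$-orbits, constant tuples $(c,\ldots,c)$ contribute fixed summands $\NCp(S^{c(2np-2)}) \simeq S^{c(2np-2)\rho}$, while non-constant tuples form size-$p$ free orbits giving induced summands $(C_p)_+\otimes S^{(\sum_i c_i)(2np-2)}$; both kinds are dual even slice spheres.

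Next I would identify right-multiplication by $t$ summand-wise. Because $t$ is adjoint to the identity under the $(-)^e \dashv (C_p)_+\otimes-$ adjunction, its underlying class is the invariant sum $\sigma = x_1+\cdots+x_p$, supported on the summand $(C_p)_+\otimes S^{2np-2}$ corresponding to the orbit $[(1,0,\ldots,0)]$. On the fixed summand $S^{c(2np-2)\rho}$, the image of $\cdot t$ lies entirely in the single free target orbit $[(c+1,c,\ldots,c)]$; by checking underlying behavior via the same adjunction, one recognizes this restriction as the $S^{c(2np-2)\rho}$-tensor shift of the defining map $S^{2n\rho-2} \to (C_p)_+\otimes S^{2np-2}$, so its cofiber is the dual even slice sphere $S^{c(2np-2)\rho + 2n\rho - 1 - \spoke}$.

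The main obstacle is the free-to-free components of $\cdot t$: a general free source summand maps into a sum of (possibly distinct) free target orbits indexed by the distinct orbits of $\{(k_1,\ldots,k_j+1,\ldots,k_p)\}_{j=1}^p$. I would handle this by filtering both $R$ and $\Sigma^{2n\rho-2}R$ by total multi-degree $\ell := \sum_i c_i$ and arguing by induction on $\ell$. The key computational input is that $\sigma$ is a non-zero-divisor in the commutative underlying ring $R^e = S^0[x_1,\ldots,x_p]$, so $\cdot t$ is injective on underlying spectra, and at each multi-degree $\ell$ the cokernel is $\mathrm{Sym}^\ell(\bar\rho)$ as a $\mathbb{Z}_{(p)}[C_p]$-module. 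Invoking the classical classification of indecomposable $\mathbb{Z}_{(p)}[C_p]$-modules as $\mathbbm{1}, \bar\rho, \rho$, each summand of $\mathrm{Sym}^\ell(\bar\rho)$ is realized by an appropriate dual even slice sphere: $S^{2m\rho}$ for the $\mathbbm{1}$-summands (possible precisely when $p\mid\ell$), $S^{2m\rho\pm 1\pm\spoke}$ for the $\bar\rho$-summands (possible when $\ell\equiv\pm 1\pmod p$), and $(C_p)_+\otimes S^{2m}$ for the $\rho$-summands (always). Finally, the resulting filtration on $S^0[S^{2n\rho-1-\spoke}]$ splits globally because any potential attaching map between dual even slice spheres of different multi-degrees would represent a class in a homotopy group of the form $\pi^{C_p}_{2m\rho-1}, \pi^{C_p}_{2m\rho-2-\spoke}$, or $\pi^e_{2m-1}$ applied to a dual even slice sphere, and these all vanish by the homotopical evenness content of \Cref{prop:pure-splitting}, yielding the desired decomposition.
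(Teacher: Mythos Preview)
Your outline correctly identifies the cofiber description $S^0[S^{2n\rho-1-\spoke}] \simeq \mathrm{cofib}(\Sigma^{2n\rho-2}R \xrightarrow{\cdot t} R)$ and the decomposition of $R$ into dual even slice spheres, and the observation that the underlying cokernel is $\Sym^*(\rhobar)$ is exactly right.  However, two of your later steps do not go through as written.

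First, you pass from the $\Z_{(p)}[C_p]$-module decomposition of $\Sym^{\ell}(\rhobar)$ to a decomposition of the associated graded $\mathrm{gr}_\ell C$ as a genuine $C_p$-spectrum.  This is not automatic: a $C_p$-spectrum with $\Phi^{C_p}$ a single sphere and underlying homology containing a $\triv$ or $\rhobar$ summand is not determined by that data alone, and in particular you have not produced the maps $S^{2m\rho} \to \mathrm{gr}_{\ell} C$ or $S^{2m\rho-1-\spoke} \to \mathrm{gr}_{\ell} C$ realizing those summands.  (For the free summands your argument can be completed, but the non-induced pieces are the whole difficulty.)  The paper supplies exactly these maps, globally rather than on associated graded, via the norm classes $\Nm(x)^k$ and the canonical spoke class $\wt{x}$; this is the content of \Cref{prop:geo-fixed-gen}.

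Second, your final splitting argument misapplies \Cref{prop:pure-splitting}: that result requires tensoring with a homotopically even ring $E$, whereas you are working over $S^0$, which is not homotopically even.  Concretely, for $n=1$ the attaching maps between consecutive filtration degrees live in stem $2p-3$, where $\alpha_1 \in \pi^{e}_{2p-3}(S^0_{(p)})$ gives nonzero maps between induced even spheres; the groups you claim vanish do not.  The paper avoids any such splitting argument entirely: it assembles a single map $\bigoplus(\text{dual even slice spheres}) \to S^0[S^{2n\rho-1-\spoke}]$ from the norm classes together with induced spheres hitting the free part (produced via Hurewicz surjectivity from $\NCp S^0[S^{2np-2}]$), and then checks it is an equivalence by separately verifying $\Phi^{C_p}$ (\Cref{prop:geo-fixed-gen}) and $\Phi^e$ (using \Cref{lem:hom-sym} and \Cref{prop:sym-rhobar}).
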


To prove this, we will construct a map in from a wedge of dual even slice spheres which is an equivalence on underlying spectra and geometric fixed points.

\begin{cnst}
    The composition
    \[S^{2n\rho-2} \to (C_p)_+ \otimes S^{2np - 2} \to \NCp S^0 [S^{2np-2}] \to S^0 [S^{2n\rho-1-\spoke}]\]
    is canonically null, and hence induces a map
    \[\wt{x} : S^{2n\rho-1-\spoke} \to S^0 [S^{2n\rho-1-\spoke}].\]
    On the other hand, letting \[x : S^{2np-2} \to S^0[S^{2np-2}]\] denote the canonical inclusion, there is the norm map
    \[\Nm(x) : S^{(2np-2)\rho} \to \NCp S^0 [S^{2np-2}] \to S^0 [S^{2n\rho-1-\spoke}].\]
    Since $S^0 [S^{2n\rho-1-\spoke}]$ is a module over $\NCp S^0 [S^{2np-2}]$, this implies the existence of maps
    \[\Nm(x)^{k} \cdot \wt{x}^\eps : S^{k(2np-2)\rho+\eps(2n\rho-1-\spoke)} \to S^0 [S^{2n\rho-1-\spoke}]\]
    for $k \in \N$ and $\eps \in \{0,1\}$.
\end{cnst}

We first show that the sum of these maps induces an equivalence on geometric fixed points:

\begin{prop} \label{prop:geo-fixed-gen}
    Let
    \[\Psi : \bigoplus_{\stackrel{k\geq 0}{\eps \in \{0,1\}}} S^{k(2np-2)\rho+\eps(2n\rho-1-\spoke)} \to S^0 [S^{2n\rho-1-\spoke}]\]
    denote the direct sum of the maps $\Nm(x)^k \cdot \wt{x}^\eps$.
    Then $\gfp (\Psi)$ is an equivalence.
\end{prop}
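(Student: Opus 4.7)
The plan is to directly compute both sides of $\gfp(\Psi)$ using the fundamental properties of $\gfp$: it is a symmetric monoidal left adjoint, satisfies $\gfp S^V \simeq S^{V^{C_p}}$ for representation spheres, $\gfp(C_p)_+ \simeq 0$, and $\gfp \NCp Y \simeq Y$ for any spectrum $Y$.

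Write $R = S^0[S^{2n-2}]$ with generator $y$ in degree $2n-2$, and $T = S^0[S^{2np-2}]$. The first step is to show that $\gfp$ of the $\mathrm{E}_1$-ring structure map $S^0[S^{2n\rho-2}] \to \NCp S^0[S^{2np-2}]$ gives a map $R \to T$ that factors through the augmentation $R \to S^0$: indeed, the defining composite
\[S^{2n\rho-2} \to (C_p)_+ \otimes S^{2np-2} \to \NCp S^0[S^{2np-2}]\]
factors through a term whose $\gfp$ vanishes. Hence $y$ acts as zero on $T$ regarded as a right $R$-module, and applying $\gfp$ to the definition of $S^0[S^{2n\rho-1-\spoke}]$ as a relative tensor product yields, via the cofiber sequence $\Sigma^{2n-2} R \xrightarrow{y} R \to S^0$,
\[\gfp S^0[S^{2n\rho-1-\spoke}] \simeq T \otimes_R S^0 \simeq \mathrm{cofib}(\Sigma^{2n-2} T \xrightarrow{0} T) \simeq T \oplus \Sigma^{2n-1} T.\]

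The second step is to identify the images of the defining maps. The map $\gfp \Nm(x): S^{2np-2} \to T$ is the canonical inclusion of the generator, landing in the first summand. The map $\gfp \wt{x}: S^{2n-1} \to T \oplus \Sigma^{2n-1} T$ lands in the second summand as the inclusion of the bottom cell: by construction, $\wt{x}$ is the map induced on the cofiber by the canonical null-homotopy of the composite $S^{2n\rho-2} \to \NCp S^0[S^{2np-2}] \to S^0[S^{2n\rho-1-\spoke}]$, and after applying $\gfp$ this null-homotopy is precisely the one that defines the $\Tor_1$-summand in the splitting above.

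Combining these identifications with the module structure of $S^0[S^{2n\rho-1-\spoke}]$ over $\NCp S^0[S^{2np-2}]$, which on $\gfp$ restricts to the obvious $T$-action on $T \oplus \Sigma^{2n-1} T$, the map $\gfp(\Nm(x)^k \cdot \wt{x}^\eps)$ hits $y^k$ in the first summand when $\eps = 0$ and hits $y^k$ times the bottom cell of the second summand when $\eps = 1$. Summing over all $k \geq 0$ and $\eps \in \{0,1\}$ exhibits $\gfp \Psi$ as a direct sum of evident equivalences. The one place that requires care is the identification of $\gfp \wt{x}$ with the $\Tor_1$ class; this amounts to tracing the cofiber sequences through the bar resolution, which is formal once the bimodule structure has been computed on $\gfp$.
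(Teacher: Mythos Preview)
Your argument is correct and close in spirit to the paper's, but the execution differs in one useful way. Both proofs begin by observing that $\gfp$ of the structure map $S^0[S^{2n\rho-2}] \to \NCp S^0[S^{2np-2}]$ factors through the augmentation (because the generator passes through $(C_p)_+$), and hence
\[
\gfp S^0[S^{2n\rho-1-\spoke}] \;\simeq\; T \otimes_R S^0
\]
with $T = S^0[S^{2np-2}]$ a trivial $R = S^0[S^{2n-2}]$-module. You then use the two-term $R$-resolution of $S^0$ to split this as $T \oplus \Sigma^{2n-1}T$ and try to match $\gfp(\Nm(x)^k\cdot\wt{x}^\eps)$ directly to the summands. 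The paper instead rewrites $T \otimes_R S^0 \simeq T \otimes (S^0 \otimes_R S^0)$, identifies $S^0 \otimes_R S^0 \simeq \Sigma^\infty_+ S^{2n-1}$ via the bar construction, and then checks on integral homology that $\gfp\Psi$ is an isomorphism. The homology route has the advantage that the delicate point you flag---pinning down $\gfp\wt{x}$ inside the splitting---reduces to showing that its Hurewicz image is the generator $x_{2n-1}$ of $H_{2n-1}$, which is a statement about a rank-one group rather than about a choice of splitting. Your direct approach avoids homology altogether but, as you note, must trace the null-homotopy defining $\wt{x}$ through the resolution; this is doable but is exactly where the paper's homological shortcut pays off.
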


\begin{proof}
We have an identification
\[\gfp S^0 [S^{2n\rho-1-\spoke}] \simeq S^0 [S^{2np-2}] \otimes_{S^0 [S^{2n-2}]} S^0 \simeq S^0 [S^{2np-2}] \otimes (S^0 \otimes_{S^0 [S^{2n-2}]} S^0).\]
Under this identification, the map
\[\gfp (\Nm (x)) : S^{2np-2} \to \gfp S^0 [S^{2n\rho-1-\spoke}]\]
corresponds to the inclusion of $S^{2np-2}$ into the left factor.

There are equivalences
  \[ S^0 \otimes_{S^0 [S^{2n-2}]} S^0 \simeq \Sigma^\infty _+ \Bar(*, \Omega \Sigma S^{2n-2}, *) \simeq \Sigma^\infty _+ S^{2n-1}\]
and hence an isomorphism
\[ \H_* ^e (S^0 \otimes_{S^0[S^{2n-2}]} S^0; \Z) \cong \Lambda_{\Z} (x_{2n-1}).\]
Furthermore, the map
\[\gfp (\wt{x}) : S^{2n-1} \to \gfp S^0 [S^{2n\rho-1-\spoke}]\]
sends the fundamental class of $S^{2n-1}$ to $x_{2n-1}$.

It follows that $\gfp (\Psi)$ induces an isomorphism on homology, so is an equivalence.
\end{proof}

Our next task is to extend $\Psi$ to a map that also induces an equivalence on underlying spectra. We will see that this can be accomplished by taking the direct sum with maps from induced even spheres, which are easy to produce. The main input is a computation of the homology of the underlying spectrum of $S^0 [S^{2n\rho-1-\spoke}]$ as a $C_p$-representation.

\begin{lem} \label{lem:hom-sym}
    There is a $C_p$-equivariant isomorphism
    \[\H_* ^e (S^0 [S^{2n\rho-1-\spoke}]; \Z) \cong \Sym^* _{\Z} (\rhobar),\]
    where $\rhobar$ lies in degree $(2np-2)$.
\end{lem}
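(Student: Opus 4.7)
The plan is to unwind the defining relative tensor product on underlying spectra and then perform a Künneth computation. Applying $\Phi^e$ to the definition of $S^0[S^{2n\rho-1-\spoke}]$, the norm becomes the $p$-fold tensor power and we obtain
\[\Phi^e S^0[S^{2n\rho-1-\spoke}] \simeq (S^0[S^{2np-2}])^{\otimes p} \otimes_{S^0[S^{2np-2}]} S^0,\]
with $C_p$ cyclically permuting the $p$ smash factors. Writing $\Z[y] = H_*(S^0[S^{2np-2}])$ and $\Z[x_1, \ldots, x_p] = H_*((S^0[S^{2np-2}])^{\otimes p})$ (all generators in degree $2np-2$), a standard K\"unneth spectral sequence then identifies
\[H^e_*(S^0[S^{2n\rho-1-\spoke}]; \Z) \cong \Tor^{\Z[y]}_*(\Z[x_1, \ldots, x_p], \Z).\]

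The key substep is pinning down the $\E_1$-ring map $\Z[y] \to \Z[x_1, \ldots, x_p]$ on homology. By the adjunction $\Phi^e \dashv (C_p)_+ \otimes (-)$, the map $S^{2n\rho-2} \to (C_p)_+ \otimes S^{2np-2}$ adjoint to the identity on $S^{2np-2}$ restricts, on underlying, to the transfer $S^{2np-2} \to \bigvee_{C_p} S^{2np-2}$ given by summing the inclusions (using that $\sigma$ acts trivially on $H_{2np-2}$ of the underlying sphere of $S^{2n\rho-2}$, since the representation $2n\rho-2$ has trivial determinant). Composing with the canonical inclusion $(C_p)_+ \otimes S^{2np-2} \to \NCp S^0[S^{2np-2}]$, which places the $i$-th wedge summand into the $i$-th tensor factor as $x_i$, yields $y \mapsto x_1 + \cdots + x_p$. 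Setting $e_1 = x_1 + \cdots + x_p$, the change of coordinates $(x_1, \ldots, x_p) \leftrightarrow (x_1, \ldots, x_{p-1}, e_1)$ presents $\Z[x_1, \ldots, x_p]$ as a polynomial, hence free, $\Z[e_1]$-module, so higher $\Tor$ vanishes and
\[H^e_*(S^0[S^{2n\rho-1-\spoke}]; \Z) \cong \Z[x_1, \ldots, x_p]/(x_1 + \cdots + x_p).\]

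To finish, one identifies this $\Z[C_p]$-algebra with $\Sym^*_\Z(\rhobar)$: the $C_p$-permutation module $\Z\{x_1, \ldots, x_p\}$ is the regular representation $\rho_\Z$, quotienting by the trivial subrepresentation $\Z \cdot e_1$ yields a rank $p-1$ module whose generators are permuted by $C_p$ in accordance with $\rhobar$, and the relation $e_1 = 0$ lives in the lowest degree $2np-2$, so the quotient ring is $\Sym^*_\Z(\rhobar)$. The main obstacle is the first substep: carefully unwinding the coinduction adjunction, together with the fact that the canonical inclusion $(C_p)_+ \otimes S^{2np-2} \to \NCp S^0[S^{2np-2}]$ realizes the wedge factors as the generators of the $p$-fold tensor power, to produce the sum formula $y \mapsto x_1 + \cdots + x_p$ on underlying homology. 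Once this is in hand, the $\Tor$-vanishing and the identification with the symmetric algebra are elementary.
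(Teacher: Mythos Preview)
Your proof is correct and follows essentially the same route as the paper: both arguments reduce to the K\"unneth/bar spectral sequence for the defining relative tensor product, with $E_2 = \Tor^{\Sym^*(x)}(\Sym^*(\rho),\Z)$ collapsing because $\Z[x_1,\dots,x_p]$ is free over $\Z[e_1]$. The only difference is organizational: the paper first constructs the comparison map $\Sym^*(\rhobar) \to \H^e_*(S^0[S^{2n\rho-1-\spoke}];\Z)$ abstractly from the unital $\NCp S^0[S^{2np-2}]$-module structure and then appeals to the K\"unneth spectral sequence, whereas you compute the Tor directly and identify the answer afterward---in particular, you make explicit the step $y \mapsto x_1+\cdots+x_p$ and the freeness argument that the paper leaves to the reader.
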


\begin{proof}
    There are equivariant isomorphisms
    \[\H_* ^e (S^0 [S^{2n\rho-2}]; \Z) \cong \Sym^* _{\Z} (x)\]
    and
    \[\H_* ^e (\NCp S^0 [S^{2np-2}]; \Z) \cong \Sym^* _{\Z} (\rho),\]
    where $x$ and $\rho$ both lie in degree $(2np-2)$.
    Since $S^0 [S^{2n\rho-1-\spoke}]$ is a unital $\NCp S^0 [S^{2np-2}]$-module, we obtain a map
    \[\Sym^* _{\Z} (\rho) \to \H_* ^e (S^0 [S^{2n\rho-1-\spoke}]; \Z)\]
    of $\Sym^* _{\Z} (\rho)$-modules.
    Since $x$ goes to zero in $\H_* ^e (S^0 [S^{2n\rho-1-\spoke}]; \Z)$, it follows that this factors through a map
    \[ \Sym^* _{\Z} (\rhobar) \cong \Sym^* _{\Z} (\rho) \otimes_{\Sym^* _{\Z} (x)} \Z \to \H_* ^e (S^0 [S^{2n\rho-1-\spoke}]; \Z).\]
    Examining the K\"unneth spectral sequence, we see that this map must be an isomorphism.
%
%    By the definition of $\Ss[S^{2n\rho-1-\spoke}]$ as a tensor product, there is a spectral sequence
%    \[\Tor_{*} ^{\F_p[x]} (\F_p[\rho], \F_p) \Rightarrow \H_* (\Ss[S^{2n\rho-1-\spoke}]; \F_p)\]
%    where $\F_p [\rho]$ is considered as a $\F_p [x]$-algebra via the map sending $x$ to the trace element of $\rho$.
%
%    Since $\F_p [\rho]$ is flat as a $\F_p [x]$-algebra, the $\mathrm{E}_2$-term may be identified with the tensor product $\F_p [\rho] \otimes_{\F_p [x]} \F_p \cong \F_p[\rhobar]$, and the spectral sequence collapses to the desired isomorphism.
\end{proof}

%It follows directly from the construction of $\Ss[S^{2n\rho-1-\spoke}]$ that
%\[\H_* (\Ss[S^{2n\rho-1-\spoke}]; \Z) \cong \Sym^* _{\Z} (\rhobar),\]
%the structure of which is described by the following proposition:

The following theorem in pure algebra determines the structure of the mod $p$ reduction $\Sym^* _{\F_p} (\rhobar)$ as a $C_p$-representation:

\begin{prop}[{\cite[Propositions III.3.4-III.3.6]{AlmFoss}}] \label{prop:sym-rhobar}
  Let $\rhobar$ denote the reduced regular representation of $C_p$ over $\F_p$, and let $e_1, \dots e_p \in \rhobar$ denote generators which are cyclically permuted by $C_p$ and satisfy $e_1 + \dots + e_p = 0$. We set $\Nm = e_1 \cdots e_p \in \Sym^p _{\F_p} (\rhobar)$.
    Then the symmetric powers of $\rhobar$ decompose as follows:
    \begin{align*}
        \Sym^k _{\F_p} (\rhobar) \cong
        \begin{cases}
            \triv\{\Nm^{\ell}\} \oplus \mathrm{free}& \text{ if } k = \ell \cdot p \\
            \rhobar\{\Nm^{\ell}e_1, \dots, \Nm^{\ell}e_p\}\oplus \mathrm{free}& \text{ if } k = \ell \cdot p + 1 \\
            \mathrm{free}& \text{ otherwise.} 
        \end{cases}
    \end{align*}
\end{prop}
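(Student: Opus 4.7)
The plan is to invoke the classification of finitely generated $\F_p[C_p]$-modules: since $\F_p[C_p] \cong \F_p[t]/(t-1)^p$ is local Artinian, every such module is a direct sum of Jordan blocks $J_i := \F_p[t]/(t-1)^i$ for $1 \le i \le p$, with $\triv = J_1$, $\rhobar = J_{p-1}$, and the regular representation $\rho = J_p$ the unique free (equivalently, injective) indecomposable. The goal is therefore to read off the Jordan type of $\Sym^k_{\F_p}(\rhobar)$.

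The first step is to use the short exact sequence of $\F_p[C_p]$-modules
\[0 \to \Sym^{k-1}_{\F_p}(\rho) \xrightarrow{\cdot \sigma_1} \Sym^k_{\F_p}(\rho) \to \Sym^k_{\F_p}(\rhobar) \to 0,\]
where $\sigma_1 = e_1 + \cdots + e_p$ is $C_p$-invariant (so the map is equivariant) and injective since the polynomial ring has no zero divisors. The key observation is that $\Sym^k_{\F_p}(\rho) = \F_p[e_1, \ldots, e_p]_k$ is a \emph{permutation} $\F_p[C_p]$-module: the cyclic action permutes monomials, and a monomial has full stabilizer exactly when it has the form $(e_1 e_2 \cdots e_p)^m$, and trivial stabilizer otherwise. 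Hence $\Sym^k_{\F_p}(\rho) \cong \triv \oplus \mathrm{free}$ when $p \mid k$, and is free when $p \nmid k$. The long exact sequence in Tate cohomology $\hat{H}^*(C_p; -)$ applied to the short exact sequence above then shows that $\hat{H}^*(C_p; \Sym^k_{\F_p}(\rhobar))$ is one-dimensional in each degree when $k \equiv 0$ or $1 \pmod p$, and vanishes otherwise. Since $\F_p[C_p]$ is Frobenius (self-injective), vanishing Tate cohomology implies freeness, handling the case $k \not\equiv 0, 1 \pmod p$.

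In the remaining two cases the decomposition must take the form $J_i \oplus \mathrm{free}$ for some $1 \le i \le p-1$, and I would pin down $i$ by computing the total dimension $\dim_{\F_p} \Sym^k_{\F_p}(\rhobar) = \binom{k+p-2}{p-2}$ modulo $p$ via Lucas's theorem. For $k = \ell p$ this congruence evaluates to $1$, forcing $i = 1$; for $k = \ell p + 1$ it evaluates to $p-1$, forcing $i = p-1$. To match the explicit generators, one verifies that $\Nm^\ell = (e_1 \cdots e_p)^\ell$ is a nonzero $C_p$-fixed element of $\Sym^{\ell p}_{\F_p}(\rhobar)$ (yielding the $\triv$ summand), and that in $\Sym^{\ell p + 1}_{\F_p}(\rhobar)$ the cyclically permuted elements $\Nm^\ell e_1, \ldots, \Nm^\ell e_p$ satisfy only the single relation $\sum_i \Nm^\ell e_i = 0$, yielding the $\rhobar$ submodule which by the dimension count is then a direct summand (its complement being projective, hence a split quotient).

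The main obstacle is the non-vanishing check: showing that $\Nm^\ell$ is nonzero in $\Sym^{\ell p}_{\F_p}(\rhobar)$, equivalently that it does not lie in the image of multiplication by $\sigma_1$. This requires a small computation in the classical invariant theory of $C_p$ acting on $\F_p[e_1, \ldots, e_p]$, and is essentially the only non-formal input. Once one has that, the Tate-cohomological argument above together with the Lucas congruence determines the decomposition in every degree.
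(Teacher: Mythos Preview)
The paper does not prove this proposition; it simply cites Almkvist--Fossum. Your strategy---the short exact sequence relating $\Sym^*(\rho)$ to $\Sym^*(\rhobar)$, the permutation structure on $\Sym^k(\rho)$, Tate cohomology to detect freeness, and Lucas's theorem to read off the Jordan type---is a clean and correct route to the \emph{abstract} decomposition, and is essentially how the cited reference proceeds.

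There is, however, a real gap in your identification of the explicit generators. For the $\triv$ summand you claim that $\Nm^\ell$ being a nonzero fixed element suffices, but every free $\F_p[C_p]$-module has nonzero fixed elements in its socle that do not span summands; what you need is that $\Nm^\ell$ is nonzero in $\hat H^0$. (This does follow easily from your exact sequence, since $(e_1\cdots e_p)^\ell$ generates $\hat H^0(\Sym^{\ell p}(\rho))$ and the quotient map is a $\hat H^0$-isomorphism, the kernel being free.) The $\rhobar$ case is more seriously incomplete: your assertion that the quotient $\Sym^{\ell p+1}(\rhobar)/\langle \Nm^\ell e_i\rangle$ is projective ``by the dimension count'' is unjustified and in fact the underlying principle is false. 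For $p=3$, the submodule $(t-1)J_3 \subset J_2\oplus J_3$ is isomorphic to $J_2=\rhobar$, yet the quotient is $J_2\oplus J_1$, not free, so the extension does not split. A clean fix for both cases: show that multiplication by $\Nm$ is a \emph{split} injection $\Sym^k(\rhobar)\hookrightarrow\Sym^{k+p}(\rhobar)$ for every $k$, by checking that it induces an isomorphism on Tate cohomology (compare your short exact sequences in degrees $k$ and $k+p$); then induct from the base cases $k=0,1,\ldots,p-1$. Incidentally, the ``main obstacle'' you flag---that $\Nm^\ell\neq 0$ in $\Sym^*(\rhobar)$---is trivial, since $\Sym^*(\rhobar)\cong\F_p[e_1,\ldots,e_{p-1}]$ is an integral domain; the genuine obstacle is the splitting just described.
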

%
%It is easy to deduce that this splitting lifts to a splitting over $\Z_{(p)}$.
%
%\begin{cor} \label{cor:sym-rhobar-int}
  %Let $\rhobar_{\Z_{(p)}}$ denote the reduced regular representation of $C_p$ over $\Z_{(p)}$, and let $e_1, \dots, e_p \in \rhobar_{\Z_{(p)}}$ denote generators which are cyclically permuted by $C_p$ and satisfy $e_1 + \dots + e_p = 0$. We set $\Nm = e_1 \cdots e_p \in \Sym^p _{\Z_{(p)}} (\rhobar_{\Z_{(p)}})$.
  %Then the symmetric powers of $\rhobar_{\Z_{(p)}}$ decompose as follows:
  %\begin{align*}
    %\Sym^k _{\Z_{(p)}} (\rhobar) \cong
    %\begin{cases}
      %\triv_{\Z_{(p)}}\{\Nm^{k}\} \oplus \mathrm{free}& \text{ if } k \equiv 0 \mod p \\
      %\rhobar_{\Z_{(p)}}\{\Nm^{k}e_1, \dots, \Nm^{k}e_p\}\oplus \mathrm{free}& \text{ if } k \equiv 1 \mod p \\
      %\mathrm{free}& \text{ otherwise.} 
    %\end{cases}
  %\end{align*}
%\end{cor}
%
%\begin{proof}
  %\todo{Fill in.}
%\end{proof}
%
%\begin{cor}
    %The cofiber of the map $\Phi$ of \Cref{prop:geo-fixed-gen} is a direct sum of even induced spheres $(C_p)_+ \otimes S^{2n}$.
%\end{cor}

\begin{proof}[Proof of \Cref{thm:spoke-alg-splitting}]
  Let $\Psi$ be as in \Cref{prop:geo-fixed-gen}. It follows from \Cref{lem:hom-sym} and \Cref{prop:sym-rhobar} that the mod $p$ homology of $\Phi^e (S^0 [S^{2n\rho-1-\spoke}])$ splits as $\im(\H_{*} ^e (\Psi)) \oplus \mathrm{free}$. Moreover, $\Psi$ is an equivalence on geometric fixed points by \Cref{prop:geo-fixed-gen}.

    It therefore suffices to show that, given any summand of $\H_{2k} ^e (S^0 [S^{2n\rho-1-\spoke}]; \F_p)$ isomorphic to $\rho$, there is a map $(C_p)_+ \otimes S^{2k} \to S^0 [S^{2n\rho-1-\spoke}]$ whose image is that summand. Taking the direct sum of $\Psi$ with an appropriate collection of such maps, we obtain an $\F_p$-homology equivalence. Since both sides have finitely-generated free $\Z$-homology, this must in fact be a $p$-local equivalence, as desired.

    To prove the remaining claim, it suffices to show that the mod $p$ Hurewicz map
    \[\pi^{e} _{*} (S^0 [S^{2n\rho-1-\spoke}]) \to \H_{*} ^e (S^0 [S^{2n\rho-1-\spoke}]; \F_p)\]
    is surjective in every degree. This follows from the following square
    \begin{center}
        \begin{tikzcd}
            \pi ^{e} _* (\NCp S^0 [S^{2np-2}]) \arrow[r, two heads] \arrow[d] & \H_* ^e (\NCp S^0 [S^{2np-2}]; \F_p) \arrow[d, two heads]\\
            \pi^{e} _{*} (S^0 [S^{2n\rho-1-\spoke}]) \arrow[r] & \H_{*} ^e (S^0 [S^{2n\rho-1-\spoke}]; \F_p),
        \end{tikzcd}
    \end{center}
    where the top horizontal arrow is a surjection because $\NCp S^0 [S^{2np-2}]$ is a non-equivariant direct sum of spheres, and the right vertical arrow is a surjection by the proof of \Cref{lem:hom-sym}.
    %homology of the map $\Psi$ of \Cref{prop:geo-fixed-gen} is 
%
%
    %It will suffice to show that, for each $n$ and each copy of $\rho$ in $\H_{2n} (\Ss[S^{2n\rho-1-\spoke}]; \Z)$, there is a not necessarily equivariant map $S^{2n} \to \Ss[S^{2n\rho-1-\spoke}]$.
    %Indeed, suppose that we had such maps. Let $\overline{\Psi}$ denote the direct sum of $\Psi$ with the induced maps $(C_p)_+ \otimes S^{2n} \to \Ss[S^{2n\rho-1-\spoke}]$.
    %Then \Cref{prop:geo-fixed-gen} implies that $\overline{\Psi}$ induces a homology isomorphism upon taking geometric fixed points, and \Cref{prop:sym-rhobar} implies that $\overline{\Psi}$ induces a homology isomorphism on understlying spectra.
    %We conclude that $\overline{\Psi}$ is the desired equivalence between $\Ss[S^{2n\rho-1-\spoke}]$ and a direct sum of even slice spheres.
%
    %Now, suppose that $\rho \subset H_{2n} (S)$
    %We have proven the existence of a cofiber sequence
    %\[\bigoplus_{k\geq 0} (S^{2k(np-1)\rho} \oplus S^{2k(np-1)\rho + 2n\rho-1-\spoke}) \xrightarrow{\Phi} \Ss[S^{2n\rho-1-\spoke}] \to (C_p)_+ \otimes T,\]
    %where $T$ is a wedge of even spheres.
\end{proof}

\section{Examples of homotopical evenness} \label{sec:Hmtpy-Purity}
In this section, we introduce our principal examples of homotopically even $C_p$-ring spectra. By \Cref{cor:pure-oriented}, they are also $\mu_p$-orientable.

Our first examples are the the Morava $E$-theories $E_{p-1}$ associated to the height $p-1$ Honda formal group. As we will recall in \Cref{sec:Ethy}, $E_{p-1}$ admits an essentially unique $C_p$-action by $\mathbb{E}_\infty$-automorphisms. We use this action to view $E_{p-1}$ as a Borel $C_p$-equivariant $\mathbb{E}_\infty$-ring.

Our second example is the connective $\mathbb{E}_\infty$-ring $\tmf(2)$ of topological modular forms with full level $2$ structure. The group $\GL_2 (\Z / 2\Z) \cong \Sigma_3$ acts on $\tmf(2)$ via modification of the level $2$ structure, and we view $\tmf(2)$ as a $C_3$-equivariant $\mathbb{E}_\infty$-ring via the inclusion $C_3 \subset \Sigma_3$. We will discuss this example in \Cref{sec:tmf2}.

The main result of this section is the homotopical evenness of the above $C_p$-ring spectra:

\begin{thm} \label{thm:E-thy-pure}
  The Borel $C_p$-equivariant height $p-1$ Morava $E$-theories $E_{p-1}$ associated to the Honda formal group over $\F_{p^{p-1}}$ are homotopically even and satisfy condition (4).
\end{thm}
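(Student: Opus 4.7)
My approach is to reduce the four evenness conditions to statements about the homotopy fixed point spectral sequence for $E_{p-1}^{hC_p}$, and then invoke the classical Hopkins--Miller computation. Since $E_{p-1}$ is Borel-equivariant, its genuine fixed points agree with the homotopy fixed points, and at odd $p$ every $C_p$-representation has trivial determinant, making $S^V \simeq S^{|V|}$ in the Borel $C_p$-stable category (the case $p=2$ reduces to the known $C_2$-equivariant evenness of $E_1$, cf. \cite{RealOrEn}). Therefore $\pi_V^{C_p} E_{p-1} \cong \pi_{|V|} E_{p-1}^{hC_p}$, and by combining this with the defining cofiber sequences for $S^{\pm\spoke}$ and applying \Cref{rmk:hmtp-pure-rephrase}, I translate the four conditions into:
\begin{enumerate}
\item $\pi_*^e E_{p-1}$ is concentrated in even degrees;
\item $\pi_{2np-1} E_{p-1}^{hC_p} = 0$ for all $n \in \mathbb{Z}$;
\item the transfer $\pi_{2np-2}^e E_{p-1} \to \pi_{2np-2} E_{p-1}^{hC_p}$ is surjective for all $n$;
\item the restriction $\pi_{2np} E_{p-1}^{hC_p} \to \pi_{2np}^e E_{p-1}$ is injective for all $n$.
\end{enumerate}

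Condition (1) is immediate from the stated formula for $\pi_*^e E_{p-1}$. To verify the remaining conditions, I would analyze the homotopy fixed point spectral sequence
\[E_2^{s,t} = H^s(C_p; \pi_t^e E_{p-1}) \Rightarrow \pi_{t-s} E_{p-1}^{hC_p}.\]
An order-$p$ element of the Morava stabilizer group acts on the uniformizer $u \in \pi_{-2}^e E_{p-1}$ by a primitive $p$-th root of unity modulo the maximal ideal $\mathfrak{m} = (p, u_1, \ldots, u_{p-2})$, and filtering by powers of $\mathfrak{m}$ gives a manageable description of the $E_2$-page. The Hopkins--Miller analysis of the differentials (see also \cite{NaveEO}) then yields an $E_\infty$-page that is concentrated in even total degrees, with all classes in total degrees $\equiv 0 \pmod{2p}$ lying on the zero-line (giving (4)) and all classes in total degrees $\equiv -2 \pmod{2p}$ arising in positive filtration as images of the transfer from $\pi_{2np-2}^e E_{p-1}$ (giving (3)); the absence of odd-degree classes gives (2).

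The hardest step will be the HFPSS computation itself, which is classical but intricate, relying on an explicit understanding of the order-$p$ automorphism of the Honda formal group and its action on the deformation parameters. The conceptual content of the present theorem is the translation: in the Borel $C_p$-setting at odd $p$, homotopical evenness (with condition (4)) amounts precisely to odd-degree vanishing of $\pi_* E_{p-1}^{hC_p}$, zero-line concentration in degrees $\equiv 0 \pmod{2p}$, and transfer surjectivity in degrees $\equiv -2 \pmod{2p}$ --- exactly what the Hopkins--Miller computation supplies.
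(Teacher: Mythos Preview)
Your overall plan—translate the four conditions into statements about $E_{p-1}^{hC_p}$ and invoke the Hopkins--Miller HFPSS—matches the paper's, but the reduction step contains a real error. The claim that $S^V \simeq S^{|V|}$ in the $p$-local Borel $C_p$-category for odd $p$ is false, and ``trivial determinant'' does not prove it: that only shows the classifying map $BC_p \to B\GL_1(\mathbb{S}_{(p)})$ lands in the identity component, not that it is null. For the $2$-dimensional rotation representation $\lambda$ one has $(S^{\lambda})_{hC_p} \simeq \mathrm{cofib}(S^1 \hookrightarrow BC_p)$, while $(S^2)_{hC_p} \simeq \Sigma^2 \Sigma^{\infty}_+ BC_p$. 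These have identical $\F_p$-cohomology groups, but $P^1$ is nonzero on $H^2$ of the former (the quotient map $BC_p \to BC_p/S^1$ is an isomorphism on $H^{\ge 2}$, and $P^1 y = y^p$ in $H^*(BC_p;\F_p)$) and zero on $H^2$ of the latter (which is a double suspension of $H^0$). Hence $S^{\lambda} \not\simeq S^2$ even $p$-locally, and your identification $\pi^{C_p}_V E_{p-1} \cong \pi_{|V|} E_{p-1}^{hC_p}$ cannot hold in general.

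The paper replaces this step with the norm. Since $E_{p-1}$ is a $C_p$-$\mathbb{E}_\infty$-ring and the periodicity element $u \in \pi^e_{2} E_{p-1}$ is a unit, $\Nm(u) \in \pi^{C_p}_{2\rho} E_{p-1}$ is invertible, so the $RO(C_p)$-graded homotopy is $2\rho$-periodic. This reduces conditions (1)--(4) to the single integer degrees $-1$, $-2$, $0$, where $\pi^{C_p}_k E_{p-1} = \pi_k E_{p-1}^{hC_p}$ holds trivially; those three facts are then read off from the $E_\infty$-page exactly as you sketch in your final paragraph.
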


\begin{thm} \label{thm:tmf2-pure}
  The $C_3$-ring spectrum $\tmf(2)$ of connective topological modular forms with full level $2$ structure is homotopically even and satisfies condition (4).
\end{thm}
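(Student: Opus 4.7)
The plan is to reduce all four conditions of homotopical evenness to statements about the ordinary fixed points $\tmf(2)^{hC_3}$ and then establish these via the homotopy fixed point spectral sequence (HFPSS), using the explicit $C_3$-action on $\pi^e_* \tmf(2)$. Because $\tmf(2)$ is Borel $C_3$-equivariant and $2n\rho$ is an oriented representation of $C_3$, the representation-graded homotopy groups simplify to $\pi^{C_3}_{2n\rho + k} \tmf(2) \cong \pi_{6n + k} \tmf(2)^{hC_3}$ for every $k \in \mathbb{Z}$. Combined with \Cref{rmk:hmtp-pure-rephrase}, this rephrases condition (2) as $\pi_{6n-1} \tmf(2)^{hC_3} = 0$, condition (3) as surjectivity of the transfer $\pi^e_{6n-2} \tmf(2) \twoheadrightarrow \pi_{6n-2} \tmf(2)^{hC_3}$, and condition (4) as injectivity of the restriction $\pi_{6n} \tmf(2)^{hC_3} \hookrightarrow \pi^e_{6n} \tmf(2)$. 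Condition (1) is automatic since $\pi^e_* \tmf(2)$ is concentrated in degrees divisible by $4$.

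Next, I would identify the $C_3$-action on $\pi^e_* \tmf(2) \cong \mathbb{Z}_{(3)}[\lambda_1, \lambda_2]$. Geometrically, $\Sigma_3$ permutes the three non-identity $2$-torsion points of a level-$2$ elliptic curve; on weight-one modular forms this induces the standard $2$-dimensional irreducible representation of $\Sigma_3$, which restricts along $C_3 \subset \Sigma_3$ to the reduced regular representation $\rhobar_{\mathbb{Z}_{(3)}}$. Polynomiality then upgrades this to $\pi^e_{4k} \tmf(2) \cong \Sym^k(\rhobar_{\mathbb{Z}_{(3)}})$.

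The third step is to run the HFPSS $E_2^{s,t} = H^s(C_3, \pi^e_t \tmf(2)) \Rightarrow \pi_{t-s} \tmf(2)^{hC_3}$. \Cref{prop:sym-rhobar} gives a mod-$3$ decomposition of $\Sym^k(\rhobar)$ as a single summand of $\triv$ or $\rhobar$ (determined by $k \bmod 3$) plus a sum of copies of the regular representation $\rho$; a Bockstein argument lifts this to a corresponding decomposition of $\Sym^k(\rhobar_{\mathbb{Z}_{(3)}})$ up to free summands. Combined with the standard facts that $H^{>0}(C_3, \rho) = 0$, that $H^{2k}(C_3, \triv) \cong \mathbb{Z}/3$ for $k \geq 1$, and that $H^{2k+1}(C_3, \rhobar) \cong \mathbb{Z}/3$ for $k \geq 0$, this pins down a very sparse $E_2$-page. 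Reading off total degrees $6n-1$, $6n-2$, and $6n$ then yields the vanishing required for condition (2) together with the edge-map behavior needed for the transfer and restriction reformulations of (3) and (4).

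The main obstacle is controlling the differentials and hidden extensions on the HFPSS: although the $E_2$-page is sparse, one must verify that no non-trivial $d_r$ moves classes into total degrees $\equiv -1 \pmod{6}$ and that the restriction edge map really is injective in the required bidegrees. A cleaner alternative, if the identification $\tmf(2)^{hC_3}_{(3)} \simeq \mathrm{tmf}_0(2)_{(3)}$ is available, is to directly appeal to the known computation of $\pi_* \mathrm{tmf}_0(2)_{(3)}$ and verify the transfer and restriction have the required form by tracing through the descent square relating $\tmf(2)$ to $\mathrm{tmf}_0(2)$.
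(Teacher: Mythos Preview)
Your route through the homotopy fixed point spectral sequence is genuinely different from the paper's argument, which instead runs the \emph{slice} spectral sequence. The paper quotes the determination of the slices of $\tmf(2)$ from \cite[Corollary~3.2.1.10]{DWThesis}, giving $\bigoplus_n P^n_n\tmf(2)\simeq \HZu_{(3)}[S^{2\rho-1-\spoke}]$; then \Cref{thm:spoke-alg-splitting} decomposes this into dual even slice spheres tensored with $\HZu_{(3)}$, and a short general lemma (\Cref{lem:dual-slice-pure}) reduces homotopical evenness of each such piece to a handful of already-tabulated vanishing statements in the $RO(C_3)$-graded homotopy of $\HZu$. All of the representation theory of $\Sym^*(\rhobar)$ that you invoke is thereby absorbed into the proof of \Cref{thm:spoke-alg-splitting}, and no spectral-sequence differentials need to be identified.

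Your outline, by contrast, has two concrete problems. First, the proposed shortcut is wrong: since $\Gamma_0(2)/\Gamma(2)$ is the order-$2$ subgroup of upper-triangular matrices in $\GL_2(\F_2)\cong\Sigma_3$, one has $\tmf_0(2)\simeq\tmf(2)^{hC_2}$, not $\tmf(2)^{hC_3}$, so that identification is unavailable. Second, and more seriously, the main line genuinely requires the differentials you flag as an obstacle but do not resolve. On your own $E_2$-page the $\rhobar$-summand of $\Sym^{3\ell+1}(\rhobar)$ produces a $\Z/3$ at $(s,t)=(12\ell-6n+5,\,12\ell+4)$ in stem $6n-1$ for every $\ell\ge \max(0,\lceil(6n-4)/12\rceil)$, and the $\triv$-summands of $\Sym^{3\ell}(\rhobar)$ similarly populate stems $6n$ and $6n-2$ in positive filtration; thus conditions (2), ($3'$), and ($4'$) all \emph{fail} at $E_2$ and only hold at $E_\infty$ after the $d_5$ and $d_9$ differentials have run. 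You can certainly import those differentials from Stojanoska's computation \cite{Vesnatmf2} and finish, but at that point the external input is at least as heavy as the slice computation the paper cites. (A smaller point: your reduction $\pi^{C_3}_{2n\rho+k}\tmf(2)\cong\pi_{6n+k}\tmf(2)^{hC_3}$ does not follow from orientability of $2n\rho$ alone---unlike $E_{p-1}$ there is no invertible $\Nm(u)$ here---so you would also need to justify cofreeness of $\tmf(2)$ together with a Thom isomorphism coming from its complex orientation.)
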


Applying \Cref{cor:pure-oriented}, we obtain the following corollary:

\begin{cor}
    The $C_p$-ring spectra $E_{p-1}$ and $\tmf(2)$ are $\mu_p$-orientable.
\end{cor}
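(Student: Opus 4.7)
The plan is to combine the two preceding theorems of the section with the orientability criterion already established in Section 3. Specifically, \Cref{thm:E-thy-pure} and \Cref{thm:tmf2-pure} are precisely the statements that $E_{p-1}$ and $\tmf(2)$ are homotopically even (in fact satisfying the stronger condition (4)), and \Cref{cor:pure-oriented} says that any homotopically even $C_p$-ring spectrum admits a $\mu_p$-orientation. So the corollary follows by invoking each of these in turn — there is essentially no content left to verify at this point.

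The only thing to note is that this genuinely is a \emph{formal} consequence, relying on no further properties of $E_{p-1}$ or $\tmf(2)$ beyond homotopical evenness: the construction of the orientation in \Cref{cor:pure-oriented} goes through the cell structure on $\Sigma^{\infty}\Cmu$ produced by \Cref{cor:CPmup-cells} (via \Cref{thm:CPmup-pure}) together with the obstruction-vanishing result \Cref{prop:even-unob}. Since the cells of $\Sigma^{\infty}\Cmu$ are direct sums of $2k$-dimensional even slice spheres and homotopical evenness precisely kills maps out of their desuspensions into the target ring, the $(1+\spoke)$-suspension of the unit $S^{1+\spoke}\to \Sigma^{1+\spoke}R$ extends across the bottom cell inclusion $S^{1+\spoke}\simeq \Sigma^{\infty}\CP^{1}_{\mu_p}\hookrightarrow \Sigma^{\infty}\Cmu$ to the desired $\mu_p$-orientation.

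There is no obstacle here: the hard work is concentrated in \Cref{thm:E-thy-pure} and \Cref{thm:tmf2-pure}, whose proofs will occupy the remainder of the section. Assuming those, the present corollary is a one-line deduction which the text may as well state inline: \emph{apply \Cref{cor:pure-oriented} to \Cref{thm:E-thy-pure} and \Cref{thm:tmf2-pure}}.
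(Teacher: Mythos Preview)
Your proposal is correct and matches the paper's approach exactly: the paper simply states the corollary as an immediate application of \Cref{cor:pure-oriented} to \Cref{thm:E-thy-pure} and \Cref{thm:tmf2-pure}, with no further argument.
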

%
%In the rest of this section, we will elaborate on these examples and prove Theorems \ref{thm:E-thy-pure} and \ref{thm:tmf2-pure}.

\subsection{Height $p-1$ Morava $E$-theory} \label{sec:Ethy}
Given a pair $(k, \G)$, where $k$ is a perfect field of characterstic $p > 0$ and $\G$ is a formal group $\G$ over $k$ of finite height $h$, we may functorially associate an $\mathbb{E}_\infty$-ring $E(k,\G)$, the Lubin-Tate spectrum or Morava $E$-theory spectrum of $(k,\G)$ \cite{GHObst,ECII}.
There is a non-canonical isomorphism
\[\pi_{*} E(k,\G) \cong \mathbb{W}(k) \llbracket u_1, \dots, u_{h-1} \rrbracket [u^{\pm 1}],\]
where $\abs{u_i} = 0$ and $\abs{u} = -2$.

%\todo{Is there a general Honda formal group cite that I could throw in here?}

Given a prime $p$ and finite height $h$, a formal group particularly well-studied in homotopy theory is the Honda formal group.
The Honda formal group $\G_h ^{\mathrm{Honda}}$ is defined over $\F_p$, so the Frobenius isogney may be viewed as a endomorphism
\[F : \G_h ^{\mathrm{Honda}} \to \G_h ^{\mathrm{Honda}}.\]
The Honda formal group is uniquely determined by the condition that $F^h = p$ in $\End(\G_h ^{\mathrm{Honda}})$.
%\todo{Check that this is true!}

The endomorphism ring of the base change of $\G_h ^{\Honda}$ to $\F_{p^h}$ is the maximal order $\OO_h$ in the division algebra $\DD_h$ of Hasse invariant $1/h$ and center $\Q_p$. By the functoriality of the Lubin-Tate theory construction, the automorphism group $\SS_h = \OO_h ^\times$ of $\G_h ^{\Honda}$ over $\F_{p^h}$ acts on $E(\F_{p^h}, \G_h ^\Honda)$. To keep our notation from becoming too burdensome, we set
\[E_{p-1} \coloneqq E(\F_{p^{p-1}}, \G_{p-1} ^\Honda).\]

There is a subgroup $C_p \subset \SS_{p-1}$, which is unique up to conjugation. Indeed, such subgroups correspond to embeddings $\QQ_p (\zeta_p) \subset \DD_{p-1}$. Since $\QQ_p (\zeta_p)$ is of degree $p-1$ over $\QQ_p$, it follows from a general fact about division algebras over local fields that such a subfield exists and is unique up to conjugation (cf. \cite[Application on pg. 138]{SerreLCF}).
Using any such $C_p$, we may view $E_{p-1}$ as a Borel $C_p$-equivariant $\mathbb{E}_\infty$-ring spectrum.
%
%Morava stabilizer group $\SS_n$, has a $C_p$, unique up to conjugation.

Homotopical evenness of $E_{p-1}$ will follow from the computation of the homotopy fixed point spectral sequence for $E_{p-1} ^{h C_p}$, which was first carried out by Hopkins and Miller and has been written down in \cite{NaveEO} and again reviewed in \cite{PicEO}. We recall this computation below. The homotopy fixed point spectral sequence takes the form
\[\H^s (C_p, \pi_{t} E_{p-1}) \Rightarrow \pi_{t-s} E_{p-1} ^{h C_p},\]
so the first step is to compute the action of $C_p$ on $\pi_* E_{p-1}$.
%, which is noncanonically isomorphic to.
%\[\mathbb{W}(\F_{p^{p-1}}) \llbracket u_1, u_2, \dots, u_{p-2} \rrbracket [u^{\pm}], \, \abs{u_i} = 0 \text{ and } \abs{u} = -2.\]

This action may be determined as follows. Abusing notation, let $v_1 \in \pi_{2p-2} E_{p-1}$ denote a lift of the canonically defined element $v_1 \in \pi_{2p-2} E_{p-1}/p$. The element $v_1$ is fixed modulo $p$ by the $\SS_{p-1}$ and in particular the $C_p$-action on $E_{p-1}$, so if we fix a generator $\gamma \in C_p$ we find that the element $v_1 - \gamma v_1$ is divisible by $p$.
Set $v = \frac{v_1 - \gamma v_1}{p}$. Then the two key properties of $v$ are that:
\begin{enumerate}
    \item $v + \gamma v + \dots + \gamma^{p-1} v = 0$.
    \item $v$ is a unit in $\pi_* E_{p-1}$. As a consequence, $\Nm(v) = v \cdot \gamma v \cdots \gamma^{p-1} v$ is a unit in $\pi_* E_{p-1}$ which is fixed by the $C_p$-action \cite[pg. 498]{NaveEO}.% \todo{Hopkins AWS notes Project 5.2 for a different take.}
\end{enumerate}
The existence of an element $v$ satisfying the above two conditions completely determines the action of $C_p$ on $\pi_* E_{p-1}$, as follows. First, let $\wt{w} \in \pi_{-2} E_{p-1}$ denote any unit, and set $w = v \cdot \Nm (\wt{w}) \in \pi_{-2} E_{p-1}$. Then $w$ continues to satisfy (1) and (2) above and determines a map of $C_p$-representations
%property (1) implies that $v$ determines a map 
%\[v : \rhobar_{\mathbb{W}(\F_{p^{p-1}})} \to \pi_{2p-2} E_{p-1}.\]
%Multiplying by $\Nm(u)^{-1}$, we obtain a map
\[\rhobar_{\mathbb{W}(\F_{p^{p-1}})} \to \pi_{-2} E_{p-1}.\]
This determines a $C_p$-equivariant map
\[\Sym^* _{\mathbb{W}(\F_{p^{p-1}})} (\rhobar) [\Nm(w)^{-1}] \to \pi_* E_{p-1},\]
which identifies $\pi_{*} E_{p-1}$ with the graded completion of $\Sym^* _{\mathbb{W}(\F_{p^{p-1}})} (\rhobar) [\Nm(w)^{-1}]$ at the graded ideal generated by the kernel of the essentially unique nonzero map of $\mathbb{W}(\F_{p^{p-1}})[C_p]$-modules $\rhobar_{\mathbb{W}(\F_{p^{p-1}})} \to \triv_{\F_{p^{p-1}}}$.

\begin{rmk} \label{rmk:Ep-1-forward}
  In \Cref{sec:Image}, we will see that the element $v$ is intimately related to the $\mu_p$-orientability of $E_{p-1}$.
  For later use, we note that it follows from the above analysis that the map $\rhobar_{\F_{p^{p-1}}} \to \pi_{2p-2} E_{p-1} /(p, \mathfrak{m}^2)$ induced by $v$ is an isomorphism.
\end{rmk}

\begin{rmk}
  As pointed out by the referee, the element $v \in \pi_{2p-2} E_{p-1}$ may also be described in terms of $\BP$-theory.
  The class $t_1 \in \BP_{2p-2} \BP$ determines a function $t_1 : \mathbb{S}_{p-1} \to E_{2p-2}$, and it follows from the formula $\eta_R (v_1) = v_1 + p t_1$ in $\BP_* \BP$ that $t_1 (\gamma^{-1}) = \frac{v_1-\gamma v_1}{p} = v$.
  From this perspective, the crucial fact that $v$ is a unit in $\pi_* E_{p-1}$ follows from the calculations in \cite[pgs. 438-439]{Rav78}.
\end{rmk}

Using the above determination of the $C_p$-action on $\pi_* E_{p-1}$, as well as \Cref{prop:sym-rhobar}, one may obtain with some work the following description of $\H^s (C_p, \pi_t E_{p-1})$:

\begin{prop} [{Hopkins--Miller, cf. \cite[Proposition 2.6]{PicEO}}]
  There is an exact sequence
  \begin{align} \label{eq:SES}
    \pi_* E_{p-1} \xrightarrow{tr} \H^* (C_p, \pi_* E_{p-1}) \to \F_{p^{p-1}} [\alpha,\beta,\delta^{\pm 1}]/(\alpha^2 ) \to 0,
  \end{align}
  where $\abs{\alpha}  = (1,2p-2), \abs{\beta} = (2,2p^2-2p),$ and $\abs{\delta} = (0,2p)$.
\end{prop}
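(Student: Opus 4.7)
The plan is to reduce the statement to a group-cohomology computation, exploiting the explicit description of $\pi_* E_{p-1}$ recalled above as a completion of $\Sym^*_{\mathbb{W}(\F_{p^{p-1}})}(\rhobar)[\Nm(w)^{-1}]$ at the augmentation ideal. A basic observation, used throughout, is that the composite $\mathrm{tr} \circ \mathrm{res}$ is multiplication by $p$ and factors through the cohomology of the trivial subgroup, so $p$ annihilates both $\H^{\geq 1}(C_p, M)$ for any $C_p$-module $M$ and the cokernel of $\mathrm{tr}$ on $\H^0$. Together with the $p$-completeness of $\pi_* E_{p-1}$, this shows the cokernel is controlled by the mod-$p$ picture.

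Writing $\mathbb{W} := \mathbb{W}(\F_{p^{p-1}})$, the first step is to record the cohomology of the three building-block summands in \Cref{prop:sym-rhobar}. Free modules have vanishing $\H^{\geq 1}$, with $\H^0$ in the image of $\mathrm{tr}$. Trivial modules $\triv_{\mathbb{W}}$ have $\H^0 = \mathbb{W}$ and $\H^{2k} = \F_{p^{p-1}}$ for $k \geq 1$, vanishing in positive odd degrees since $\mathbb{W}$ is $p$-torsion-free. And $\rhobar_{\mathbb{W}}$, via the short exact sequence $0 \to \rhobar_{\mathbb{W}} \to \mathbb{W}[C_p] \to \triv_{\mathbb{W}} \to 0$ together with the vanishing of $\H^{\geq 1}(C_p, \mathbb{W}[C_p])$, has $\H^0 = 0$, $\H^{2k+1} = \F_{p^{p-1}}$ for $k \geq 0$, and vanishing positive even cohomology.

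Next I would assemble the pieces bidegree-by-bidegree. Set $\delta := \Nm(w)^{-1}$ in bidegree $(0, 2p)$. The $\triv_{\mathbb{W}}$ summand of $\Sym^{\ell p}$ generated by $\Nm(w)^{\ell}$, at internal degree $-2\ell p$, contributes one $\F_{p^{p-1}}$ in cohomological degree zero modulo transfer and one in each positive even degree; these form $\F_{p^{p-1}}[\beta, \delta^{\pm 1}]$ with $\beta$ in bidegree $(2, 2p(p-1))$. The $\rhobar_{\mathbb{W}}$ summand of $\Sym^{\ell p + 1}$, at internal degree $-2\ell p - 2$, contributes one $\F_{p^{p-1}}$ in each positive odd degree; these form $\alpha \cdot \F_{p^{p-1}}[\beta, \delta^{\pm 1}]$ with $\alpha$ in bidegree $(1, 2p-2)$. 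Combined, the cokernel of $\mathrm{tr}$ is $\F_{p^{p-1}}[\alpha, \beta, \delta^{\pm 1}]/(\alpha^2)$. The relation $\alpha^2 = 0$ is forced by bidegree: the internal degree $2(2p-2) = 4p-4$ of $\alpha^2$ is congruent to neither $0$ nor $-2$ modulo $2p$ when $p>2$, so no $\triv$ or $\rhobar$ summand contributes cohomology at that bidegree.

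The main obstacle is the interaction between the $\m$-adic completion in the definition of $\pi_* E_{p-1}$ and group cohomology: one must verify that cohomology commutes with this completion, so that the bidegree-by-bidegree analysis on the associated graded of the $\m$-adic filtration really computes $\H^*(C_p, \pi_* E_{p-1})$. A secondary subtlety is that \Cref{prop:sym-rhobar} decomposes $\Sym^k$ only mod $p$, while we need the $\mathbb{W}$-integral cohomology; this is handled by noting that the mod-$p$ counts already match what the stated ring predicts under the parity structure of $\H^*(C_p, \triv_{\mathbb{W}})$ and $\H^*(C_p, \rhobar_{\mathbb{W}})$, leaving no room for hidden Bockstein extensions.
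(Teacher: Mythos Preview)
The paper does not actually supply a proof of this proposition; it is attributed to Hopkins--Miller with a reference to \cite{PicEO}, and the paper only indicates the method in the sentence preceding the statement (``using the above determination of the $C_p$-action on $\pi_* E_{p-1}$, as well as \Cref{prop:sym-rhobar}, one may obtain with some work\ldots''). Your sketch follows exactly that indicated route: decompose $\pi_* E_{p-1}$ via the symmetric-algebra description, invoke \Cref{prop:sym-rhobar} to isolate the $\triv$, $\rhobar$, and free pieces, compute the Tate-type cohomology of each, and reassemble. Your identification of the building-block cohomology and the bidegree bookkeeping are correct.

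Two comments on the subtleties you flag. First, the completion issue is largely dissolved by the periodicity you do not quite exploit: since $\Nm(w)$ is a $C_p$-fixed unit, $\pi_{2n}E_{p-1}$ as a $\mathbb{W}[C_p]$-module depends only on $n \bmod p$, so there are only $p$ modules to analyze rather than an inverse system. Second, for the passage from the mod-$p$ decomposition of \Cref{prop:sym-rhobar} to integral cohomology, your ``no room for Bockstein extensions'' is on the right track but can be made precise: a finitely generated $p$-torsion-free $\mathbb{W}[C_p]$-module whose mod-$p$ reduction is free has vanishing $\H^{>0}$ and $\hat{\H}^0$ (use the long exact sequence for $0 \to M \xrightarrow{p} M \to M/p \to 0$ together with the fact that $\H^{>0}$ is $p$-torsion). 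One then runs the long exact sequence for the short exact sequence with $\triv_{\mathbb{W}}\cdot\Nm^\ell$ or $\rhobar_{\mathbb{W}}\cdot\Nm^\ell$ as a saturated submodule. Finally, note your bidegree argument for $\alpha^2=0$ is specific to $p>2$; this is harmless here since the paper concerns odd primes, but at $p=2$ the relation is $\alpha^2=\beta$ rather than $\alpha^2=0$.
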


Finally, we must recall the differentials in the homotopy fixed point spectral sequence. We let $\doteq$ denote equality up to multiplication by an element of $\mathbb{W}(\F_{p^{p-1}})^\times$. Then, as explained in \cite[\S 2.4]{PicEO}, the spectral sequence is determined multiplicatively by the following differentials:
\[d_{2(p-1)+1} (\delta) \doteq \alpha \beta^{p-1} \delta^{1-(p-1)^2} \,\,\, \text{ and } \,\,\, d_{2(p-1)^2+1} (\delta^{(p-1)^3} \alpha) \doteq \beta^{(p-1)^2+1},\]
along with the fact that all differentials vanish on the image of the transfer map.

In particular, on the $\mathrm{E}_\infty$-page of the homotopy fixed point spectral sequence there are no elements in positive filtration in total degrees $0$, $-1$ or $-2$. Indeed, there are no elements at all in the $(-1)$-stem.

%In particular, we are able to make the following conclusions:
%\begin{enumerate}
    %\item $\pi_{-1} E_{p-1} ^{h C_p} = 0$.
    %\item The transfermap $\pi_{-2}$
    %\item The homotopy groups $\pi_* E_{p-1} ^{h C_p}$ are $2p^2$-periodic with periodicity element $\delta^p$.
%\end{enumerate}
%
%Deteremination of action on the ring of coefficients. Hopefully in literature or we have to do it ourselves. Subsequent determination of $C_p$-cohomology, i.e. $E_2$-page of HFPSS.
%
%Differentials in this spectral sequence. Conclusion that $E_{p-1}$ is homotopically even.

We now have enough information to establish the homotopical evenness of $E_{p-1}$.

\begin{proof}[Proof of \Cref{thm:E-thy-pure}]
  Let $u \in \pi^{e} _{2} E_{p-1}$ denote the periodicity element. Then $\Nm(u) \in \pi^{C_p} _{2\rho} E_{p-1}$ is also invertible, so the $RO(C_p)$-graded equivariant homotopy of $E_{p-1}$ is $2\rho$-periodic.

    Therefore, using \Cref{rmk:hmtp-pure-rephrase}, we see that it suffices to show that:
    \begin{enumerate}
        \item $\pi_{-1} ^e E_{p-1} = 0$.
        \item $\pi_{-1} E_{p-1} ^{hC_p} = 0$.
        \item The transfer map $\pi_{-2} ^e E_{p-1} \to \pi_{-2} E_{p-1} ^{hC_p}$ is a surjection.
        \item The restriction map $\pi _{0} E_{p-1} ^{hC_p} \to \pi_{0} ^e E_{p-1}$ is an injection.
    \end{enumerate}

    Condition (1) is immediate from the fact that $E_{p-1}$ is even periodic.
    Condition (2) is a direct consequence of the above computation of the homotopy fixed point spectral sequence.
    Condition (3) follows from the following two facts:
    \begin{itemize}
      \item The short exact sequence (\ref{eq:SES}) implies that $\H^0 (C_p, \pi_{-2} E_{p-1})$ is spanned by the image of the transfer.
        \item On the $\mathrm{E}_{\infty}$-page of the homotopy fixed point spectral sequence, there are no positive filtration elements in stem $-2$.
    \end{itemize}
    Condition (4) follows from the fact that on the $\mathrm{E}_\infty$-page of the homotopy fixed point spectral sequence, there are no positive filtration elements in the zero stem.
%
    %and conditions (2)-(4) follow from the computation of the homotopy of $E_{p-1} ^{hC_p}$ via the homotopy fixed point spectral sequence, an account of which may be found in \cite[\S 2]{PicardEthy}.
\end{proof}

\subsection{The spectrum $\tmf(2)$ as a form of $\BP\langle1\rangle_{\mu_3}$} \label{sec:tmf2}
%
%Recall that an elliptic curve $E$ over a base scheme $S$ is a smooth curve of genus $1$ along with the choice of an $S$-point.
%Any such $E$ canonically admits the structure of a commutative group scheme with unit the fixed $S$-point.
%Assume that $n$ is invertible on the base $S$. Then, the $n$-torsion $E[n]$ is \'etale-locally isomorphic to the constant group scheme $(\Z/n\Z)^2$.
%A full level $n$ structure on $E$ is an isomorphism $E[n] \cong (\underline{\Z/n\Z})^2 _{S}$.
%
%There is a Deligne-Mumford moduli stack of elliptic curves, which we denote by $\M_{ell}$.
%Over $\Z[\frac{1}{n}]$, there is a Deligne-Mumford moduli stack $\M_{ell} (n)$ of elliptic curves with full level $n$ structure. The sheaf of invariant differentials on the universal elliptic curve (with level structure) determines a line bundle $\omega$ on $\M_{\ell}$ ($\M_{ell} (n)$).
%
%Then the main theorem of elliptic cohomology (cite Hopkins-Miller, Behrens writeup of Goerss-Hopkins, Lurie) establishes the existence of a sheaf
%
%Deligne and Rappoport compactified the moduli stacks $\M_{ell}$ and $\M_{ell} (n)$ by allowing nodal singularities. We let $\overline{\M_{ell}}$ and $\overline{\M_{ell}} (n)$ denote the Deligne-Rappoport compactification. The line bundle $\omega$ of invariant differentials naturally extends to the compactification. Goerss, Hopkins and Miller extended the elliptic cohomology sheaf to $\overline{\M_{ell}}$, and Hill and Lawson extended it to $\overline{\M_{ell}}(n)$.
%
%We let $\Tmf(n) = \Gamma (\overline{})$. There is a natural action of $\GL_2 (\Z/n\Z)$
%
Recall from \cite{Vesnatmf2} or \cite{tmflevel} the spectrum $\tmf(2)$ of connective topological modular forms with full level $2$ structure.\footnote{The spectrum $\tmf(2)$ is obtained from the spectrum $\Tmf(2)$ discussed in the references by taking the $\Sigma_3$-equivariant connective cover.} In this section we will consider $\tmf(2)$ as implictly $3$-localized. It is a genuine $\Sigma_3$-equivariant $\mathbb{E}_\infty$-ring spectrum with $\Sigma_3$-fixed points $\tmf(2)^{\Sigma_3} = \tmf$, the ($3$-localized) spectrum of connective topological modular forms. We view $\tmf(2)$ as a $C_3$-spectrum via restriction along an inclusion $C_3 \subset \Sigma_3$.

This spectrum has been well-studied by Stojanoska \cite{Vesnatmf2}. In particular, Stojanoska computes $\pi_{*} ^e \tmf(2) = \Z_{(3)}[\lambda_1, \lambda_2]$, where $\abs{\lambda_i} = 4$ and a generator $\gamma$ of $C_3$ acts by $\lambda_1 \mapsto \lambda_2 -\lambda_1$ and $\lambda_2 \mapsto - \lambda_1$. It follows that $\lambda_1$ and $\lambda_2$ span a copy of $\rhobar$, so that $\pi_* \tmf(2) \cong \Sym^* _{\Zlt} (\rhobar)$.
The corresponding family of elliptic curves is cut out by the explicit equation
\[y^2 = x(x-\lambda_1)(x-\lambda_2).\]
For later use, we note down some facts about the associated formal group law.
%the consequent formulas for $v_1 \in \pi_* \tmf(2) / 3$ and $v_2 \in \pi_* \tmf(2) /(3,v_1)$:

\begin{prop} \label{tmf(2)formulae}
  The $3$-series of the formal group law associated to $\tmf(2)$ is given by the following formula:
  \begin{align*}
    [3] (x) &= 3x + 8(\lambda_1 + \lambda_2)x^3 + 24(\lambda_1^2 - 2\lambda_1 \lambda_2 + \lambda_2 ^2)x^5 + 72(\lambda_1 ^3 - \lambda_1 ^2 \lambda_2 - \lambda_1 \lambda_2 ^2 + \lambda_2 ^3)x^7 \\
            &+ 8(27\lambda_1 ^4 - 76 \lambda_1 ^3 \lambda_2 + 98 \lambda_1 ^2 \lambda_2 ^2 - 76\lambda_1 \lambda_2 ^3 + 27 \lambda_2 ^4)x^9 + O(x^{10})
  \end{align*}
  It follows that we have the following formulas for $v_1$ and $v_2$:
  \[v_1 \equiv - \lambda_1 - \lambda_2 \mod 3\]
  and
  \[v_2 \equiv \lambda_1 ^4 \equiv \lambda_2 ^4 \mod (3,v_1).\]
\end{prop}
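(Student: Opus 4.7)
The plan is to compute the formal group law directly from the Weierstrass presentation of the elliptic curve, iterate the addition formula to obtain $[3](x)$, and then read off $v_1$ and $v_2$. The curve $y^2 = x(x-\lambda_1)(x-\lambda_2)$ is already in Weierstrass form with $a_1 = a_3 = a_6 = 0$, $a_2 = -(\lambda_1+\lambda_2)$, and $a_4 = \lambda_1 \lambda_2$, so the standard machinery of Silverman, \emph{The Arithmetic of Elliptic Curves}, Chapter IV, applies directly.

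First I would set $t = -x/y$ and $w = -1/y$ as uniformizers at the identity, and solve recursively for $w$ as a power series in $t$ from the equation $w = t^3 + a_2 t^2 w + a_4 t w^2 + a_6 w^3$, keeping enough terms to produce $[3](t)$ up to order $t^{10}$. Next, using $x(t) = t/w(t)$ and $y(t) = -1/w(t)$, I would construct the formal group law $F(t_1, t_2)$ via the line through $(x(t_1),y(t_1))$ and $(x(t_2),y(t_2))$ on the curve, using the standard formulae that express $F$ in terms of the slope and intercept of the chord. To minimize arithmetic I would immediately specialize to the two-variable power series, compute $[2](t) = F(t,t)$ via the tangent-line variant, and then obtain $[3](t) = F([2](t), t)$.

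Finally I would extract $v_1$ and $v_2$ from $[3](x)$. The class $v_1$ is the coefficient of $x^3$ modulo $3$, which from the stated formula is $8(\lambda_1+\lambda_2) \equiv -(\lambda_1+\lambda_2) \pmod 3$, giving the first congruence. For $v_2$, I would reduce the $x^9$ coefficient modulo $3$ and impose $v_1 = 0$, i.e.\ $\lambda_2 \equiv -\lambda_1$, to obtain a unit multiple of $\lambda_1^4$; the asserted equivalence $\lambda_1^4 \equiv \lambda_2^4$ is then automatic modulo $(3, v_1)$. As a sanity check, I would verify that the obtained formal group has height $2$ precisely at the locus where $\lambda_1+\lambda_2 \equiv 0$ but $\lambda_1 \not\equiv 0$ mod $3$, consistent with the supersingular locus.

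The main obstacle is simply the volume of bookkeeping: because $v_2$ lives in the coefficient of $x^9$, one must carry $w(t)$ and the auxiliary series to order roughly $t^{10}$ throughout, and the intermediate expressions are polynomials in $\lambda_1, \lambda_2$ with dozens of terms. To keep this manageable I would work mod $3$ as early as possible when isolating $v_1$, and then work mod $(3, \lambda_1+\lambda_2)$ when isolating $v_2$; in each case many monomials drop out before the $t^9$ coefficient is assembled, so the final extraction of $v_1$ and $v_2$ requires only a small explicit computation rather than the full $[3](x)$ series stated in the proposition.
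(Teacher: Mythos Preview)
Your proposal is correct and follows exactly the approach the paper takes: the paper's proof consists of a single sentence stating that this is an elementary computation using the method of Silverman, \emph{The Arithmetic of Elliptic Curves}, \S IV.1, which is precisely the recursive expansion of $w(t)$ and the chord-and-tangent formal group law you describe. Your practical suggestion to reduce modulo $3$ (and then modulo $(3,\lambda_1+\lambda_2)$) early is a sensible way to cut down the bookkeeping, but the underlying method is identical.
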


\begin{proof}
  This is an elementary computation using the method of \cite[\S IV.1]{Silverman}.
\end{proof}

\begin{rmk} \label{tmf(2)calc}
  Let $v = -\lambda_1 - \lambda_2$, so that $v \equiv v_1 \mod p$. Then we have
  \[\gamma v - v = \left( (\lambda_1 - \lambda_2) + \lambda_1 \right) + \lambda_1 + \lambda_2 = 3 \lambda_1,\]
  so that
  \[\frac{\gamma v - v}{3} = \lambda_1.\]
  Note that this element generates $\pi_* \tmf(2)$ as a $\Zlt$-algebra with $C_3$-action.
  In \Cref{sec:Image}, we will relate this element to the $\mu_3$-orientation of $\tmf(2)$.
\end{rmk}

In his thesis, the third author has
%shown that $\gfp \tmf(2) = \mathrm{H}\F_3 [y]$, where $\abs{y} = 6$. He also
computed the slices of $\tmf(2)$ (cf. \cite[\S 4]{HHR}):

\begin{prop}[{\cite[Corollary 3.2.1.10]{DWThesis}}]\label{prop:tmf2-slices}
  Given a $C_p$-equivariant spectrum $X$, let $P^n _n X$ denote the $n$th slice of $X$.
  The slices of $\tmf(2)$ are of the form:
  \[\bigoplus_{n} P^n _n \tmf(2) \simeq \HZu_{(3)}[S^{2\rho-1-\spoke}].\]
\end{prop}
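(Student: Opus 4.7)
My strategy is to identify both sides as the output of a single cellular filtration on $\tmf(2)$ whose associated graded pieces are even slice cells. The starting point is that $\tmf(2)$ should be homologically even: by Stojanoska's computation, $\pi^e_* \tmf(2) \cong \Sym^*_{\Zlt}(\rhobar)$ with $\rhobar$ placed in degree $4$, and by \Cref{lem:hom-sym} this matches the underlying homotopy of $S^0[S^{2\rho-1-\spoke}]$. Using the $C_3$-representation splittings of $\Sym^*(\rhobar)$ recorded in \Cref{prop:sym-rhobar}, I would produce a splitting $\tmf(2) \otimes \HZu_{(3)} \simeq \bigoplus_k S_k \otimes \HZu_{(3)}$, where each $S_k$ is a sum of $2k$-dimensional even slice spheres; by \Cref{thm:spoke-alg-splitting}, the total sum $\bigoplus_k S_k$ is then equivalent as a $C_3$-spectrum to $S^0[S^{2\rho-1-\spoke}]$.

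Since $\tmf(2)$ is connective and $\Phi^{C_3}\tmf(2)$ is of finite type, \Cref{prop:cell-structure} would then yield a filtration $\{X_k\}_{k\geq 0}$ of $\tmf(2)$ with $X_k/X_{k-1} \simeq S_k$. I would then observe that each $\HZu \otimes S_k$ is a sum of pure slices, so that smashing the filtration with $\HZu_{(3)}$ realizes the slice tower of $\tmf(2) \otimes \HZu_{(3)}$. The relevant case of the Eilenberg--MacLane slice theorem is classical for $(C_p)_+ \otimes S^{2n}$ and $S^{2n\rho}$, and the case $S^{2n\rho+1+\spoke}$ can be deduced from the cofiber sequence associated to $(C_p)_+ \to S^0 \to S^\spoke$ together with the previous two cases.

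To finish, I would pass from $\tmf(2) \otimes \HZu_{(3)}$ back to $\tmf(2)$. Here one exploits that $\pi^e_* \tmf(2)$ is torsion-free and concentrated in even degrees, together with the homotopical evenness from \Cref{thm:tmf2-pure}, to conclude that the filtration $\{X_k\}$ itself realizes the slice tower of $\tmf(2)$, giving $\bigoplus_n P^n_n \tmf(2) \simeq \bigoplus_k S_k \otimes \HZu_{(3)} \simeq \HZu_{(3)}[S^{2\rho-1-\spoke}]$. The main obstacle I anticipate is this last step: while the cellular filtration is evidently compatible with the slice tower after smashing with $\HZu$, verifying that the slice tower of $\tmf(2)$ has the stated associated graded (and not some proper refinement or coarsening) requires a careful uniqueness argument combining the fact that each $\HZu \otimes W$ is a pure slice with the strong evenness properties already established for $\tmf(2)$.
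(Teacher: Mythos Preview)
The paper does not actually prove this proposition; it is imported as \cite[Corollary 3.2.1.10]{DWThesis}, and then used as the \emph{input} to the proof of \Cref{thm:tmf2-pure}. Your proposal therefore has a genuine circularity: in your final step you invoke ``the homotopical evenness from \Cref{thm:tmf2-pure}'' to identify the cellular filtration with the slice tower, but in the paper's logic \Cref{thm:tmf2-pure} is deduced \emph{from} \Cref{prop:tmf2-slices} (via \Cref{prop:htpy-pure-slice} and \Cref{lem:dual-slice-pure}). If you want an independent proof, you must either establish homotopical evenness of $\tmf(2)$ by some other route (e.g.\ a direct homotopy fixed point computation, as is done for $E_{p-1}$), or avoid it entirely.

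There is also a gap earlier in the argument. You assert that from $\pi^e_*\tmf(2)\cong\Sym^*_{\Zlt}(\rhobar)$ together with \Cref{prop:sym-rhobar} you can ``produce a splitting $\tmf(2)\otimes\HZu_{(3)}\simeq\bigoplus_k S_k\otimes\HZu_{(3)}$.'' But matching underlying homotopy as a $C_3$-representation is not the same as exhibiting a splitting of $C_3$-equivariant $\HZu$-modules: you still have to build actual equivariant maps from the slice spheres into $\tmf(2)\otimes\HZu_{(3)}$, and you have to check what happens on geometric fixed points (about which the underlying computation says nothing). In the paper, the analogous step for $\Cmu$ (\Cref{thm:spoke-alg-splitting}) is precisely where all the work is: one constructs norm and $\wt{x}$ classes and verifies the map is an equivalence on both $\Phi^e$ and $\Phi^{C_p}$. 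For $\tmf(2)$ you would need comparable control over $\Phi^{C_3}\tmf(2)$, which you have not established. This is essentially what the thesis computation supplies.
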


We now turn to the proof of \Cref{thm:tmf2-pure}. Given the computation of the slices of $\tmf(2)$ in \Cref{prop:tmf2-slices}, this will follow from \Cref{thm:spoke-alg-splitting} and the following proposition:
%
%\begin{prop}[{\cite[Corollary 3.2.1.10]{DWThesis}}]
%    The slices of $\tmf(2)$ are of the form:
%    \[\bigoplus_{n} P^n _n \tmf(2) \simeq \HZ[S^{2\rho-1-\spoke}].\]
%\end{prop}

\begin{prop} \label{prop:htpy-pure-slice}
  Let $X$ be a $C_p$-spectrum whose slices are of the form $P_n ^n X \simeq S_n \otimes \HZu_{(p)}$, where $S_n$ is a direct sum of dual even slice $n$-spheres. Then $X$ is homotopically even and satisfies condition (4).
\end{prop}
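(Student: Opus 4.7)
\emph{Proof plan.} I would use the slice spectral sequence
\[\mathrm{E}_1^{s,V} = \pi^{C_p}_V P^s_s X \Rightarrow \pi^{C_p}_V X,\]
which converges strongly thanks to the boundedness hypothesis implicit in the applications of this proposition. Since each slice is $S_n \otimes \HZu_{(p)}$ with $S_n$ a direct sum of dual even slice $n$-spheres, it suffices to verify, for every dual even slice sphere $T$ and every representation $V \in \{2n-1,\, 2n\rho-1,\, 2n\rho-2-\spoke,\, 2n\rho+\spoke\}$, that $\pi^{C_p}_V(T \otimes \HZu_{(p)}) = 0$; the vanishing at $\mathrm{E}_1$ then passes to $\mathrm{E}_\infty$ and hence to the abutment. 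The underlying condition (1) follows by the same filtration argument, since $T^e$ is always a sum of even-dimensional spheres.

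I would then proceed by case analysis on the three types of dual even slice sphere. The case $T = (C_p)_+ \otimes S^{2k}$ is immediate: Frobenius reciprocity reduces to $\pi^e_{|V|-2k}\HZp$, and the underlying dimensions $|V| \in \{2np-1,\, 2np-3,\, 2np+1\}$ are all odd, so this vanishes. For $T = S^{2m\rho}$, Spanier--Whitehead duality gives $\pi^{C_p}_V(T \otimes \HZu_{(p)}) \cong \pi^{C_p}_{V-2m\rho}\HZu_{(p)}$, and the three conditions on the twist are exactly conditions (2)--(4) for $\HZu_{(p)}$ itself, all established in \Cref{exm:hom-pure}.

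The main obstacle is the case $T = S^{2m\rho - 1 - \spoke}$. Spanier--Whitehead duality now yields $\pi^{C_p}_V(T \otimes \HZu_{(p)}) \cong \pi^{C_p}_{V - 2m\rho + 1 + \spoke}\HZu_{(p)}$. For $V = 2n\rho - 1$ and $V = 2n\rho - 2 - \spoke$ the twist collapses to $2k\rho + \spoke$ and $2k\rho - 1$ respectively, both handled by \Cref{exm:hom-pure}. The delicate subcase is $V = 2n\rho + \spoke$, where one must show $\pi^{C_p}_{2k\rho + 1 + 2\spoke}\HZu_{(p)} = 0$. I would extract this from two applications of the long exact sequence associated to the defining cofibre sequence $(C_p)_+ \to S^0 \to S^{\spoke}$, smashed successively with $S^{2k\rho+1+\spoke}$ and $S^{2k\rho+1}$: in each case the connecting terms are $\pi^e_\ast \HZp$ in odd underlying degrees, which vanish and collapse the sequence to give the chain of isomorphisms
\[\pi^{C_p}_{2k\rho+1+2\spoke}\HZu_{(p)} \cong \pi^{C_p}_{2k\rho+1+\spoke}\HZu_{(p)} \cong \pi^{C_p}_{2k\rho+1}\HZu_{(p)}.\]
The last group vanishes by the explicit $RO(C_p)$-graded computation from the third author's thesis that underlies \Cref{exm:hom-pure}. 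Once this double-spoke step is in hand, the rest of the case analysis is routine bookkeeping, and organizing that reduction is where the real work of the proof lies.
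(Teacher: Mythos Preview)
Your strategy is the paper's: reduce via the slice spectral sequence to checking that $T \otimes \HZu_{(p)}$ is homotopically even and satisfies condition~(4) for each dual even slice sphere $T$, then handle the three shapes of $T$ case by case. The paper isolates this as a separate lemma. For the delicate subcase $T = S^{2m\rho-1-\spoke}$ with condition~(4), the paper uses the splitting $S^{\spoke}\otimes S^{\spoke}\simeq S^{\lambda}\oplus\bigoplus_{p-2}(C_p)_+\otimes S^2$ to reduce in one step to $\pi^{C_p}_{2k\rho+1+\lambda}\HZu_{(p)}=0$, whereas you iterate the defining cofiber sequence twice and land on $\pi^{C_p}_{2k\rho+1}\HZu_{(p)}=0$. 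These two vanishing statements are equivalent (one more cofiber sequence, this time for $S^{\lambda}$, identifies them), so the difference is cosmetic. Likewise, your phrase ``collapses to $2k\rho-1$'' in condition~(3) tacitly uses the splitting $S^{\spoke}\otimes S^{-\spoke}\simeq S^0\oplus\bigoplus_{p-2}(C_p)_+\otimes S^0$, which the paper writes out.

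One small slip: not all of your flanking induced terms sit in odd underlying degree. In the first long exact sequence they lie in degrees $2kp+3$ and $2kp+2$; in the second, in $2kp+2$ and $2kp+1$. The even-degree terms $\pi_{2kp+2}\HZ_{(p)}$ still vanish, but because $2kp+2\neq 0$ for every integer $k$, not because of parity. With that correction your argument goes through.
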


Using the slice spectral sequence, the proof of \Cref{prop:htpy-pure-slice} reduces to the following lemma:

\begin{lem} \label{lem:dual-slice-pure}
  Let $S$ denote a dual even slice sphere. Then $S \otimes \HZu_{(p)}$ is homotopically even and satisfies condition (4).
\end{lem}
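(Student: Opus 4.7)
The plan is to handle each of the three types of dual even slice sphere in turn, reducing each to computations of $\pi^{C_p}_\ast \HZpu$ at specific virtual $C_p$-gradings (possibly involving $\pm\spoke$). The key observation is that smashing with $S$ shifts each of the four vanishing conditions defining homotopical evenness plus (4) back to a vanishing statement for $\HZpu$ itself at a translated grading, via the identity $\pi^{C_p}_V(S^U \otimes Y) = \pi^{C_p}_{V-U}Y$.

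For $S = (C_p)_+ \otimes S^{-2n}$, the spectrum $S \otimes \HZpu$ is induced from a shift of $\HZp$, and I would apply Frobenius reciprocity to identify each of the four equivariant homotopy groups in question with a group $\pi^e_\ast \HZp$ in an odd degree. These vanish for trivial reasons, using that the underlying spectra of $S^{\pm \spoke}$ are wedges of odd-dimensional spheres and that $\HZp$ has homotopy concentrated in degree $0$.

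For $S = S^{-2n\rho}$, I would observe that the shifted vanishing conditions are precisely conditions (1)--(4) for $\HZpu$ itself (with every $RO(C_p)$-grading shifted by $2n\rho$), and conclude by \Cref{exm:hom-pure}. For $S = S^{-2n\rho-1-\spoke}$ I would likewise verify that three of the four shifted conditions reduce, after formal cancellation of $\pm\spoke$, to conditions covered by \Cref{exm:hom-pure}. The one remaining shifted condition asks for the vanishing of $\pi^{C_p}_{m\rho + 1 + 2\spoke}\HZpu$ with $m$ even; to handle this I would smash the cofiber sequence $S^{-\spoke} \to S^0 \to (C_p)_+$ with $\HZpu$ and feed it into the resulting long exact sequence, iterating as necessary until each group either is covered by \Cref{exm:hom-pure} or reduces via Frobenius reciprocity to an odd-degree $\pi^e_\ast\HZp$.

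The main obstacle is the bookkeeping in this last case: each application of the long exact sequence introduces a new $\spoke$-graded auxiliary vanishing (for instance, of $\pi^{C_p}_{m\rho + 1 + \spoke}\HZpu$) that must itself be established, and these gradings are not directly on the list covered by \Cref{exm:hom-pure}. A more streamlined execution is simply to invoke the complete computation of the spoke-graded homotopy $\pi^{C_p}_\ast \HZpu$ recorded in the third author's thesis \cite[\S A]{DWThesis}, from which all of the required vanishings can be read off at once.
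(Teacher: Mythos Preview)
Your proposal is correct and essentially matches the paper's proof: the same case analysis over the three dual even slice spheres, the same handling of the induced case via Frobenius reciprocity and of $S^{2n\rho}$ via $2\rho$-shift invariance, and the same reliance on \cite[\S A]{DWThesis} for the $S^{2n\rho-1-\spoke}$ case. The one refinement in the paper is that, rather than iterate long exact sequences or appeal wholesale to the thesis for the $2\spoke$-graded condition, it invokes the splittings $S^{\spoke}\otimes S^{-\spoke}\simeq S^{0}\oplus\bigoplus_{p-2}(C_p)_+$ and $S^{\spoke}\otimes S^{\spoke}\simeq S^{\lambda}\oplus\bigoplus_{p-2}(C_p)_+\otimes S^{2}$ to reduce conditions (3) and (4) in a single step to the targeted vanishings $\pi^{C_p}_{2k\rho-1}\HZpu=0$ and $\pi^{C_p}_{2k\rho+1+\lambda}\HZpu=0$; the first of these splittings is also what makes your ``formal cancellation of $\pm\spoke$'' rigorous.
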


\begin{proof}
  If $S \simeq S^{2n} \otimes (C_p)_+$, then this follows from the fact that $\pi_{2n-1} \HZ_{(p)} = 0$ for all $n \in \Z$.

    If $S \simeq S^{2n\rho}$, then this follows from the fact that $\HZu_{(p)}$ is homotopically even, since the definition of homotopically even is invariant under $2\rho$-suspension.

    If $S \simeq S^{2n\rho - 1 -\spoke}$, then condition (1) of \Cref{dfn:hmtpy-pure} is clearly satisfied, and conditions (2)-(4) follow from the following statements for all $n \in \Z$, which may be read off from \cite[\S A.2]{DWThesis}:
    \begin{itemize}
      \setcounter{enumi}{2}
      \item $\pi^{C_p} _{2n\rho+\spoke} \HZu_{(p)} = 0$,
      \item $\pi^{C_p} _{2n\rho-1} \HZu_{(p)} = 0$,
      \item $\pi^{C_p} _{2n\rho+1+\lambda} \HZu_{(p)} = 0,$
    \end{itemize}
    where in the proofs of (3) and (4) we have implicitly used the existence of equivalences
    \[S^{\spoke} \otimes S^{-\spoke} \simeq S^{0} \oplus \bigoplus_{p-2} (C_p)_+ \otimes S^0\]
    and
    \[S^{\spoke} \otimes S^{\spoke} \simeq S^{\lambda} \oplus \bigoplus_{p-2} (C_p)_+ \otimes S^2.\qedhere\]
\end{proof}

%We begin with the following lemma, which follows directly from the definitions:
%
%\begin{lem}
%    A $C_p$-spectrum $X$ is homotopically even if and only if $\Sigma^{2n\rho} X$ is homotopically even for any (and hence all) $n \in \Z$.
%\end{lem}
%
%\begin{proof}[Proof of \Cref{prop:htpy-pure-slice}]
%    This follows immediately from \Cref{lem:dual-slice-pure} and the convergence of the slice spectral sequence.
%\end{proof}
%
%\begin{thm}
%    Let $E_{p-1}$ denote the Lubin-Tate theory associated to the Honda formal group of height $p-1$ over $\F_{p^{p-1}}$, viewed as a Borel $C_p$-equivariant ring spectrum via the restriction of the action of the Morava stabilizer group to a cylic subgroup of order $p$.
%
%    Then $E_{p-1}$ is homotopically even.
%\end{thm}

\section{$\vot$ and a formula for its span} \label{sec:vot}
In this section, given a $\mu_p$-oriented $C_p$-ring spectrum $R$, we will define a class 
\[\vot \in \pi^{C_p} _{2\rho} (\Sigma^{1+\spoke} R) \cong \pi^{C_p} _{2\rho - 1 -\spoke} R.\] 
When $p=2$, our construction agrees with the class $v_1 ^{\R} \in \pi^{C_2} _\rho R$ in the homotopy of a Real oriented $C_2$-ring spectrum.
Just as $v_1$ is well-defined modulo $p$, we will see that $\vot$ is well-defined modulo the transfer.
We will also give a formula for the image of $\vot$ in the the underlying homotopy of $R$ in terms of the classical element $v_1$ and the $C_p$-action.

To define $\vot$, we first construct a class $\vot \in \pi^{C_p} _{2\rho} \Sigma^\infty \Cmu$, and then we take its image along the $\mu_p$-orientation $\Sigma^\infty \Cmu \to \Sigma^{1+\spoke} R$. To begin, we recall an analogous construction of the classical element $v_1$.

\subsection{The non-equivariant $v_1$ as a $p$th power}

We recall some classical, non-equivariant theory that we will generalize to the equivariant setting in the next section.

\begin{ntn}
We let $\beta:S^2 \simeq \Sigma^{\infty}\mathbb{CP}^1 \to \Sigma^{\infty}\mathbb{CP}^{\infty}$ denote a generator of the stable homotopy group $\pi_2(\Sigma^{\infty} \mathbb{CP}^{\infty})$.
\end{ntn}

Since $\mathbb{CP}^{\infty} \simeq \Omega^{\infty} \Sigma^2 \mathbb{Z}$ is an infinite loop space, its suspension spectrum $\Sigma^{\infty} \mathbb{CP}^{\infty}$ is a non-unital ring spectrum.  This allows us to make sense of the following definition.

\begin{dfn} \label{defclassicalv1}
We define the class $v_1 \in \pi_{2p} \Sigma^{\infty} \mathbb{CP}^{\infty}$ to be $\beta^p$, the $p$th power of the degree $2$ generator.
\end{dfn}

There are at least two justifications for naming this class $v_1$, which might more commonly be defined as the coefficient of $x^p$ in the $p$-series of a complex-oriented ring.  The relationship is expressed in the following proposition:

\begin{prop} \label{prop:p-series}
Let $R$ denote a (non-equivariant) homotopy ring spectrum, equipped with a complex orientation
$$\Sigma^{-2} \Sigma^{\infty} \mathbb{CP}^{\infty} \to R,$$
which can be viewed as a class $x \in R^2(\mathbb{CP}^{\infty})$.
Then the composite
$$S^{2p-2} \stackrel{v_1}{\to} \Sigma^{-2} \Sigma^{\infty} \mathbb{CP}^{\infty} \to R$$
records, up to addition of a multiple of $p$, the coefficient of $x^p$ in the $p$-series $[p]_F(x)$.
\end{prop}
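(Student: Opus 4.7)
The plan is to identify the composite $S^{2p-2} \to R$ with the coefficient of the multilinear monomial $x_1 x_2 \cdots x_p$ in the formal group sum $x_1 +_F \cdots +_F x_p$, and then compare this coefficient with the coefficient of $x^p$ in the $p$-series modulo $p$.

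First, I would unwind \Cref{defclassicalv1}: the class $v_1 = \beta^p$ is computed in the non-unital ring structure on $\Sigma^\infty \mathbb{CP}^\infty$ coming from the tensor-product H-space multiplication $\mu^{(p)}: (\mathbb{CP}^\infty)^{\times p} \to \mathbb{CP}^\infty$, and $\beta^p$ factors as
$$S^{2p} \xrightarrow{\beta^{\wedge p}} (\Sigma^\infty \mathbb{CP}^\infty)^{\wedge p} \xrightarrow{m^{(p)}} \Sigma^\infty \mathbb{CP}^\infty.$$
Under the complex orientation, $x \circ m^{(p)}$ is the image in $\widetilde{R}^*((\mathbb{CP}^\infty)^{\wedge p})$ of
$$\mu^{(p),*}(x) = x_1 +_F x_2 +_F \cdots +_F x_p = \sum_\alpha c_\alpha x_1^{a_1} \cdots x_p^{a_p} \in R^*[\![x_1, \ldots, x_p]\!],$$
with $c_\alpha$ depending only on the multiset of exponents by commutativity of $F$.

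Next, I would evaluate this at $\beta^{\wedge p}$. Passing to the smash product quotient kills every term having some $a_i = 0$, and pairing with $\beta^{\wedge p}$ (using that $\beta$ is dual to $x$ under the orientation, so $\langle x^a, \beta \rangle = \delta_{a,1}$) kills every remaining term with any $a_i \neq 1$. The only surviving monomial is the multilinear one, yielding
$$x \circ v_1 = c_{(1,1,\ldots,1)} \in \pi_{2p-2} R.$$
On the other hand, $[p]_F(x)$ is obtained from $x_1 +_F \cdots +_F x_p$ by setting every $x_i = x$, giving
$$[p]_F(x) = \sum_k \bigg(\sum_{|\alpha|=k} c_\alpha\bigg) x^k,$$
so its $x^p$-coefficient equals $\sum_{|\alpha|=p} c_\alpha = \sum_\lambda N_\lambda c_\lambda$, where $\lambda$ ranges over partitions of $p$ into at most $p$ nonnegative parts and $N_\lambda = p!/\prod_i m_i!$ counts orderings, with $m_i$ the multiplicity of $i$ in $\lambda$.

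The main (and essentially only) obstacle is the elementary combinatorial fact that $p \mid N_\lambda$ for every $\lambda \neq (1,1,\ldots,1)$. Since $\sum m_i = p$ and $\sum i \cdot m_i = p$, any $m_i \geq p$ forces $m_1 = p$ and hence $\lambda = (1^p)$; for all other $\lambda$ we have $m_i < p$ for every $i$, so $p \nmid \prod_i m_i!$ while $p \mid p!$. Combining with the previous step, the coefficient of $x^p$ in $[p]_F(x)$ differs from $c_{(1,\ldots,1)} = x \circ v_1$ by a multiple of $p$, completing the proof.
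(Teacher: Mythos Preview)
Your argument is correct and follows essentially the same approach as the paper: identify the composite as the coefficient of $x_1 x_2 \cdots x_p$ in the iterated formal sum, then compare with the $x^p$-coefficient of $[p]_F(x)$ by grouping the remaining degree-$p$ monomials. The only cosmetic difference is in the final combinatorial step: the paper invokes only the cyclic $C_p$-symmetry of $f(x_1,\dots,x_p)$ and observes that $x_1\cdots x_p$ is the unique $C_p$-fixed degree-$p$ monomial (so the remaining monomials fall into free $C_p$-orbits of size $p$), whereas you use the full $S_p$-symmetry and the multinomial divisibility $p \mid p!/\prod m_i!$ for $\lambda \neq (1^p)$.
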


\begin{proof}
Consider the $p$-fold multiplication map of infinite loop spaces
$$(\mathbb{CP}^{\infty})^{\times p} \stackrel{m}{\to} \mathbb{CP}^{\infty}$$
Applying $R^*$ to the above, we obtain a map
$$R_* \llbracket x \rrbracket \to R_* \llbracket x_1,x_2,\cdots, x_p \rrbracket.$$
By the definition of the formal group law $\--+_F\--$ associated to the complex orientation, the class $x \in R^2(\mathbb{CP}^{\infty})$ is sent to the formal sum 
$$f(x_1,x_2,\cdots,x_p)=x_1+_F x_2 +_F \cdots +_F x_p.$$  The commutativity of the formal group law ensures that this power series is invariant under cyclic permutation of the $x_i$.

The composite 
$$S^{2p-2} \to \Sigma^{-2} (\Sigma^{\infty} \mathbb{CP}^{\infty})^{\otimes p} \to \Sigma^{-2} \Sigma^{\infty} \mathbb{CP}^{\infty} \to R$$
that we must compute can be read off as the coefficient of the product $x_1x_2\cdots x_p$ in the power series $f(x_1,x_2,\cdots,x_p)$.
We may of course consider other degree $p$ monomials in the $x_i$, such as $x_1^p$.  The coefficient in $f(x_1,x_2,\cdots,x_p)$ of any such degree $p$ monomial will be an element of $\pi_{2p-2}R$.  Summing these coefficients over all the possible degree $p$ monomials, we obtain the the coefficient of $x^p$ in the single variable power series $[p]_F(x)=f(x,x,\cdots,x)$.

Our claim is that this sum differs from the coefficient of $x_1x_2\cdots x_p$ by a multiple of $p$.  The reason is that $x_1x_2\cdots x_p$ is the unique monomial invariant under the cyclic permutation of the $x_i$.  For example, the coefficients of $x_1^p,x_2^p,\cdots,$ and $x_n^p$ will all be equal, so their sum is a multiple of $p$.
\end{proof}

\begin{rmk}
The integral homology $H_*(\mathbb{CP}^{\infty};\mathbb{Z}_{(p)})$ is a divided power ring on the Hurewicz image of $\beta$.  In particular, the Hurewicz image of $v_1=\beta^p$ is a multiple of $p$ times a generator of $H_{2p}(\mathbb{CP}^{\infty};\mathbb{Z}_{(p)})$.

Consider the ring spectrum $MU$ together with its canonical complex orientation
  $$\Sigma^{-2} \Sigma^{\infty} \mathbb{CP}^{\infty} \to MU.$$
The integral homology $H_*(MU;\mathbb{Z})$ is the symmetric algebra on the image, under this map, of $\widetilde{H}_*(\mathbb{CP}^{\infty};\mathbb{Z})$.
  In particular, the Hurewicz image of $v_1$ in $H_{2p}(\Sigma^{-2} \Sigma^{\infty} \mathbb{CP}^{\infty};\mathbb{Z}_{(p)})$ is sent to $p$ times an indecomposable generator of $H_{2p-2}(MU;\mathbb{Z}_{(p)})$.  By \cite{Milnor}, this provides another justification for the name $v_1$.
\end{rmk}

\begin{rmk}
One might ask whether higher $v_i$, with $i>1$, can be defined in $\pi_*(\Sigma^{\infty}\mathbb{CP}^{\infty})$.  A classical argument with topological $K$-theory \cite{Mosher} shows that the Hurewicz image of $\pi_*(\Sigma^{\infty} \mathbb{CP}^{\infty})$ inside of $H_*(\Sigma^{\infty} \mathbb{CP}^{\infty};\mathbb{Z}_{(p)})$ is generated as a $\mathbb{Z}_{(p)}$-module by powers of $\beta$.  For $i$ larger than $1$, $\beta^{p^i}$ is not simply $p$ times a generator of $H_{2p^i}(\mathbb{CP}^{\infty};\mathbb{Z}_{(p)})$, so it is impossible to lift the corresponding indecomposable generators of $\pi_*(MU)$ to $\pi_*(\Sigma^{-2} \Sigma^{\infty} \mathbb{CP}^{\infty})$.  However, it may be possible to lift multiples of such generators.
\end{rmk}

Finally, we record the following proposition for later use:

\begin{prop} \label{divbyp}
Let $A$ denote a (non-equivariant) homotopy ring spectrum, equipped with a map
$$f:\Sigma^{\infty} \mathbb{CP}^{\infty} \to \Sigma^2 A$$
that induces the \emph{zero} homomorphism on $\pi_2$ (in particular, $f$ is not a complex orientation).  Then the image of $v_1$ in $\pi_{2p-2} A$ is a multiple of $p$.
\end{prop}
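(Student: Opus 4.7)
The plan is to reduce to a universal target using the cofiber of the bottom cell, and then compute the relevant stable homotopy group via the Atiyah--Hirzebruch spectral sequence. Since $f \circ \beta \colon S^2 \to \Sigma^2 A$ is null by hypothesis, $f$ factors as
\[
\Sigma^{\infty} \mathbb{CP}^{\infty} \xrightarrow{q} \Sigma^{\infty}(\mathbb{CP}^{\infty}/\mathbb{CP}^{1}) \xrightarrow{g} \Sigma^2 A
\]
for some $g$, where $q$ is the quotient map. It therefore suffices to prove that $q_*(v_1) \in \pi_{2p}(\Sigma^{\infty}(\mathbb{CP}^{\infty}/\mathbb{CP}^{1}))$ is divisible by $p$, since then $f_*(v_1) = g_*(q_*(v_1))$ will be as well.

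Working $p$-locally, I would apply the Atiyah--Hirzebruch spectral sequence $E^2_{s,t} = H_s(\mathbb{CP}^{\infty}/\mathbb{CP}^{1}; \pi_t(S^0)) \Rightarrow \pi_{s+t}(\Sigma^{\infty}(\mathbb{CP}^{\infty}/\mathbb{CP}^{1}))$. Since $\mathbb{CP}^{\infty}/\mathbb{CP}^{1}$ has cells only in even dimensions $\geq 4$, an entry contributing to total degree $2p$ has $s \in \{4, 6, \dots, 2p\}$ and $t = 2p-s \in \{0, 2, \dots, 2p-4\}$. By the classical $p$-local vanishing of stable stems --- the first nontrivial positive $p$-local stem appearing in degree $2p-3$ as $\alpha_1$ --- every such entry with $t > 0$ vanishes. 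A similar range estimate rules out nontrivial differentials out of $E^2_{2p, 0}$: any $d_r$ landing in $E^2_{2p-r, r-1}$ would need $r - 1 \geq 2p - 3$ for a nonzero target stem while simultaneously $2p - r \geq 4$ for a nonzero homology group, which is impossible. Hence the Hurewicz map
\[
\pi_{2p}(\Sigma^{\infty}(\mathbb{CP}^{\infty}/\mathbb{CP}^{1}))_{(p)} \longrightarrow H_{2p}(\mathbb{CP}^{\infty}/\mathbb{CP}^{1}; \mathbb{Z}_{(p)}) \cong \mathbb{Z}_{(p)}
\]
is an isomorphism.

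Finally, I would identify the Hurewicz image of $q_*(v_1)$. Dually to the polynomial Hopf algebra $H^*(\mathbb{CP}^{\infty}; \mathbb{Z}) = \mathbb{Z}[x]$, the Pontryagin ring $H_*(\mathbb{CP}^{\infty}; \mathbb{Z})$ is the divided power algebra on the fundamental class $\beta \in H_2$, and in particular $v_1 = \beta^p = p! \, \gamma_p$, where $\gamma_p$ generates $H_{2p}$. Since $q$ sends $\gamma_p$ to a generator of $H_{2p}(\mathbb{CP}^{\infty}/\mathbb{CP}^{1}) \cong \mathbb{Z}$, the Hurewicz image of $q_*(v_1)$ is $p!$ times a generator of $\mathbb{Z}_{(p)}$. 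Because $p \mid p!$ and the Hurewicz map is a $p$-local isomorphism by the previous step, $q_*(v_1)$ itself is divisible by $p$, and the conclusion follows.

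The principal obstacle is the $p$-local Atiyah--Hirzebruch computation, which rests on the classical range of vanishing for positive stable stems; once this is established, the reduction via cofibers and the divided-power identification of the Hurewicz image are both essentially formal.
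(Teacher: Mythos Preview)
Your proof is correct. Both your argument and the paper's rely on the $p$-local vanishing of positive stable stems below degree $2p-3$, but they package the final step differently. The paper first splits $\Sigma^{\infty}\mathbb{CP}^{p}$ $p$-locally to isolate the two-cell complex $\Sigma^{2}C\alpha_{1}$, factors $v_{1}$ through it, and then observes that since $f$ kills the bottom cell the restriction $\tilde f\colon \Sigma^{2}C\alpha_{1}\to\Sigma^{2}A$ factors through the top cell $S^{2p}$; the composite $S^{2p}\to\Sigma^{2}C\alpha_{1}\to S^{2p}$ must then be divisible by $p$, since otherwise $C\alpha_{1}$ would split. Your route instead passes to the full quotient $\mathbb{CP}^{\infty}/\mathbb{CP}^{1}$, uses the Atiyah--Hirzebruch spectral sequence to identify $\pi_{2p}$ with $H_{2p}$ $p$-locally, and extracts the factor of $p$ from the divided-power identity $\beta^{p}=p!\,\gamma_{p}$. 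The two key inputs---nontriviality of $\alpha_{1}$ (equivalently of $P^{1}$ on $H^{*}(\mathbb{CP}^{\infty};\mathbb{F}_{p})$) on the paper's side, and the divided-power relation in $H_{*}(\mathbb{CP}^{\infty};\mathbb{Z})$ on yours---are dual manifestations of the same phenomenon, so the arguments are close cousins. Your version is a touch more self-contained and yields the sharper constant $p!$; the paper's makes the role of $\alpha_{1}$ explicit, which feeds directly into its subsequent remark on the Adams--Novikov interpretation.
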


\begin{proof}
Let $C\alpha_1$ denote the cofiber of $\alpha_1:S^{2p-3} \to S^0$.

We recall first that, $p$-locally, the spectrum 
$$\Sigma^{\infty} \mathbb{CP}^{p}$$ 
admits a splitting as $\Sigma^2 C\alpha_1 \oplus \bigoplus_{k=2}^{p-2} S^{2k}$.  Indeed, since $\alpha_1$ is the lowest positive degree element in the $p$-local stable stems, most of the attaching maps in the standard cell structure for $\mathbb{CP}^{p}$ are automatically $p$-locally trivial.  The only possibly non-trivial attaching map is between the $(2p)$th cell and the bottom cell, and this attaching map is detected by the $P^1$ action on $H^*(\mathbb{CP}^{\infty};\mathbb{F}_p)$.

By cellular approximation, $v_1:S^{2p} \to \Sigma^{\infty} \mathbb{CP}^{\infty}$ must factor through $\Sigma^{\infty} \mathbb{CP}^{p}$, and again the lack of elements in the $p$-local stable stems ensures a further factorization of $v_1$ through $\Sigma^2 C\alpha_1$.  Thus, to determine the image of $v_1$ in $\pi_{2p}(\Sigma^2 A)$, it suffices to consider the composite
$$\tilde{f}:\Sigma^2 C\alpha_1 \to \Sigma^{\infty} \mathbb{CP}^{p} \to \Sigma^{\infty} \mathbb{CP}^{\infty} \to \Sigma^2 A.$$

There is by definition a cofiber sequence $S^2 \to \Sigma^2 C\alpha_1 \to S^{2p}$.  By the assumption that $f$ is trivial on $\pi_2$, $\tilde{f}$ must factor as a composite
$$\Sigma^2 C\alpha_1 \to S^{2p} \to \Sigma^2 A.$$
We now finish by noting that the composite $v_1:S^{2p} \to \Sigma^2 C\alpha_1 \to S^{2p}$ must be a multiple of $p$, because otherwise $C\alpha_1$ would split as $S^{2p} \oplus S^2$.
\end{proof}

\begin{rmk}
The argument used in the proof of \Cref{divbyp} suggests yet another interpretation of \Cref{prop:p-series}, as pointed out by the referee.  \Cref{prop:p-series} is true because $\pi_{2p-2}C\alpha_1$ is generated by $v_1$ in the Adams-Novikov spectral sequence.
\end{rmk}

\subsection{The equivariant $\vot$ as a norm}

As we defined the non-equivariant $v_1 \in \pi_{2p} \Sigma^{\infty} \mathbb{CP}^{\infty}$ to be the $p$th power of a degree $2$ class, we similarly define an equivariant $\vot \in \pi^{C_p}_{2\rho} \Sigma^{\infty} \Cmu$ to be the \emph{norm} of a degree $2$ class.  We thank Mike Hill for suggesting this conceptual way of constructing $\vot$.  To see that $\Sigma^\infty \Cmu$ is equipped with norms, we will make use of the following proposition:

\begin{prop}\label{prop:cpmup-as-em-space}
There is an equivalence of $C_p$-equivariant spaces
$$\Omega^{\infty} \Sigma^{1+\Yright} \underline{\mathbb{Z}} \simeq \Cmu,$$
where $\underline{\mathbb{Z}}$ denotes the $C_p$-equivariant Eilenberg--Maclane spectrum associated to the constant Mackey functor.  
\end{prop}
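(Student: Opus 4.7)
The plan is to deduce this by applying $\Omega^\infty$ to an explicit cofiber sequence of $C_p$-spectra obtained from the defining cofiber sequence $(C_p)_+ \to S^0 \to S^{\spoke}$ of the spoke sphere. Rotating twice and then suspending once yields a cofiber sequence
\[ S^{1+\spoke} \to (C_p)_+ \otimes S^2 \to S^2, \]
where the second map is the $2$-fold suspension of the augmentation $(C_p)_+ \to S^0$. Smashing with $\underline{\mathbb{Z}}$, we obtain a fiber sequence
\[ \Sigma^{1+\spoke}\underline{\mathbb{Z}} \to (C_p)_+ \otimes \Sigma^2\underline{\mathbb{Z}} \to \Sigma^2 \underline{\mathbb{Z}} \]
of $C_p$-spectra. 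Since $\Omega^\infty$ is a right adjoint, it carries this fiber sequence of $C_p$-spectra to a fiber sequence of $C_p$-spaces.

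Next I would identify the last two terms after applying $\Omega^\infty$. The space $\Omega^\infty \Sigma^2 \underline{\mathbb{Z}}$ represents Bredon cohomology in degree two with constant coefficients, so it is canonically $\mathbb{CP}^\infty$ with trivial $C_p$-action. For the middle term, I would use that $(C_p)_+ \otimes X$ is the coproduct of $p$ copies of $X$ with the cyclic permutation action; since spectra are stable this coproduct is also a product, and $\Omega^\infty$ preserves products, so
\[ \Omega^\infty\bigl((C_p)_+ \otimes \Sigma^2 \underline{\mathbb{Z}}\bigr) \simeq (\mathbb{CP}^\infty)^{\times p} \]
with cyclic $C_p$-action. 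The map to $\mathbb{CP}^\infty$ is induced from the augmentation, which on the level of spectra sums up the $p$ wedge summands; translating through the infinite loop space structure on $\Sigma^2 \underline{\mathbb{Z}}$, addition becomes tensor product of line bundles, so the resulting map of $C_p$-spaces is the $p$-fold multiplication map $(\mathcal{L}_1,\dots,\mathcal{L}_p) \mapsto \mathcal{L}_1 \otimes \cdots \otimes \mathcal{L}_p$. This is precisely the map whose fiber was used to define $\Cmu$, so the resulting fiber sequence identifies $\Omega^\infty \Sigma^{1+\spoke}\underline{\mathbb{Z}}$ with $\Cmu$.

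I expect the only genuinely non-formal step to be the identification of the map $\Omega^\infty\bigl((C_p)_+ \otimes \Sigma^2 \underline{\mathbb{Z}}\bigr) \to \Omega^\infty \Sigma^2 \underline{\mathbb{Z}}$ with the $p$-fold tensor product map; this requires tracing the correspondence between the additive infinite loop space structure on $\Omega^\infty \Sigma^2 \underline{\mathbb{Z}}$ and the multiplicative group structure on $\mathbb{CP}^\infty \simeq \mathrm{B}\mathbb{G}_m$, and keeping careful track of the cyclic $C_p$-action throughout. Everything else (the rotation of cofiber sequences, the fact that $\Omega^\infty$ preserves fiber sequences and products in $C_p$-spectra, and the identification $\Omega^\infty \Sigma^2 \underline{\mathbb{Z}} \simeq \mathbb{CP}^\infty$) is formal.
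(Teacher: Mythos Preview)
Your argument is correct and follows exactly the approach sketched in the paper: rotate and suspend the defining cofiber sequence of $S^{\spoke}$, tensor with $\underline{\mathbb{Z}}$, and apply $\Omega^\infty$ to recover the defining fiber sequence of $\Cmu$. The additional detail you supply about identifying the resulting map with the $p$-fold multiplication is a welcome elaboration of what the paper leaves implicit.
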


\begin{proof}
This is \Cref{cpmup-as-em-space}.
\end{proof}

\begin{cnstr} \label{Cmunorm}
The above proposition equips the space $\mathbb{CP}^{\infty}_{\mu_p}$ with a natural \emph{norm}, meaning a map
$$N^{C_p}_{e}((\Cmu)^e) \to \Cmu.$$
Indeed, any $C_p$-equivariant infinite loop space $\Omega^{\infty} Y$, like $\Omega^{\infty} S^{1+\Yright} \underline{\mathbb{Z}}$, is equipped with a norm
$$N^{C_p}_{e} (\Omega^{\infty} Y)^e \to \Omega^{\infty} Y.$$
This norm is $\Omega^{\infty}$ applied to the $C_p$-spectrum map
$$(C_p)_+ \otimes Y \to Y$$
that is induced from the identity on $Y^e$.
\end{cnstr}

\begin{cnv} \label{coordinates}
For the remainder of this section we fix a (non-canonical) equivalence
$$(\Cmu)^{e} \simeq (\mathbb{CP}^{\infty})^{\times p-1}.$$
The natural map of $C_p$-spaces
  $$S^{1+\Yright}=\mathbb{CP}^1_{\mu_p} \to \mathbb{CP}^{\infty}_{\mu_p}$$
then induces an (again, non-canonical) equivalence
  $$(S^{1+\Yright})^e \simeq \bigvee_{p-1} S^2,$$
giving $p-1$ classes 
$$\beta_1,\beta_2,\dots,\beta_{p-1} \in \pi^{e}_{2}(\mathbb{CP}^{\infty}_{\mu_p}).$$
Choosing our non-canonical equivalence appropriately, we may suppose that the $C_p$-action on $\pi^{e}_2(\mathbb{CP}^{\infty}_{\mu_p};\mathbb{Z}_{(p)})$ is given by the rules
\begin{enumerate}
\item $\gamma (\beta_i) = \beta_{i+1}$, if $1 \le i \le p-2$
\item $\gamma (\beta_{p-1}) = -\beta_1-\beta_2-\cdots-\beta_{p-1}.$
\end{enumerate}
\end{cnv}

\begin{dfn}
We let
$$\vot:S^{2\rho} \to \Sigma^{\infty} \Cmu.$$
denote the \emph{norm} of $\beta_1$.  Explicitly, norming the non-equivariant $\beta_1$ map yields a map
  $$S^{2\rho} \simeq N_e^{C_p} S^2 \to N_e^{C_p}(\Phi^e (\Sigma^{\infty} \Cmu)),$$
and we may compose this with the norm map of \Cref{Cmunorm} to make the class
$$\vot \in \pi^{C_p}_{2\rho}(\Sigma^{\infty} \Cmu).$$
\end{dfn}

\begin{rmk}
Of course, the choice of the class $\beta_1$ above is not canonical.  We view this as a mild indeterminancy in the definition of $\vot$, related to the fact that the classical $v_1$ should only be well-defined modulo $p$.  As we will see later, many formulas we write for $\vot$ will similarly be well-defined only modulo transfers.
%The choice of $x_1$ above is not canonical,  and so neither is the class $\vot$ as we have defined it.  We view this as a mild indeterminancy, related to the fact that the classical $v_1 
%Some indeterminacy in the definition of $\vot$ is to be expected, since the classical element $v_1 \in \pi_{2p-2}(BP)$ may be chosen to be any indecomposable generator.  The mod $p$ reduction of the classical $v_1$ is well-defined up to multiplication by elements of $\mathbb{F}_p^{\times}$, and the following proposition captures a similar uniqueness property of $\vot$. \todo{right?}
\end{rmk}

%\begin{prop}
%Modulo transfers, the homotopy group $\pi^{C_p}_{2\rho} \Sigma^{\infty} \Cmu$ is isomorphic to $\mathbb{F}_p$.  The class $\vot$ is represented by a non-trivial element.
%\end{prop}

%\begin{proof}
%We defer the proof of this to Appendix BLAH.  
%\end{proof}

\subsection{A formula for $\vot$ in terms of $v_1$}

Our next aim will be to give an explicit formula for the \emph{image} of $\vot$ in the underlying homotopy of a $\mu_p$-oriented cohomology theory.  Our formula is stated as \Cref{thm:votFormula}.  To begin its derivation, our first order of business is to give a different formula for $\vot$ modulo transfers:

\begin{prop} \label{votalt}
In $\pi^{e}_{2p}(\Sigma^{\infty}\Cmu)$, the class $p\vot$ and the class $\Tr(\beta_1^p)$ differ by $p$ times a transferred class.  In particular, $\Tr(\beta_1^p)$ is divisible by $p$, and the class $\frac{\Tr(\beta_1^{p})}{p}$ is the restriction of a class in $\pi^{C_p}_{2\rho} \Sigma^{\infty} \Cmu$.
\end{prop}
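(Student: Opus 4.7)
The plan is to reduce the proposition to a polynomial identity in $\pi^e_*(\Sigma^{\infty}\mathbb{CP}^{\infty}_{\mu_p})$, which acquires a (non-unital) ring structure from the infinite loop space structure on $\mathbb{CP}^{\infty}_{\mu_p}$ and on which $C_p$ acts by ring automorphisms. Set $e_i := \gamma^{i-1}\beta_1$ for $i = 1, \ldots, p$; by \Cref{coordinates} the relation $e_1 + e_2 + \cdots + e_p = 0$ holds in $\pi^e_2$. The definition of $\vot$ as the norm of $\beta_1$, combined with the description of the underlying of the norm structure from \Cref{cnstr-orientation}, identifies the restriction of $\vot$ to $\pi^e_{2p}$ with the product $\sigma_p := e_1 e_2 \cdots e_p$. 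Meanwhile, since $\gamma$ acts as a ring automorphism, $\Tr(\beta_1^p) = \sum_{i=0}^{p-1}\gamma^i(\beta_1^p) = \sum_{i=1}^p e_i^p$.

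Expanding the identity $0 = (e_1 + \cdots + e_p)^p$ via the multinomial theorem gives
\[0 = \sum_{i=1}^p e_i^p + p\,T, \qquad T := \sum_{\substack{k_1+\cdots+k_p = p \\ 0\le k_i < p}} \tfrac{1}{p}\binom{p}{k_1,\ldots,k_p}\, e_1^{k_1}\cdots e_p^{k_p}.\]
Each multinomial coefficient is divisible by $p$ whenever all $k_i<p$, so $T$ has integer coefficients, and rearranging yields $\Tr(\beta_1^p) = -pT$. The $C_p$-action cyclically permutes exponent vectors in the sum defining $T$; the only invariant vector with all $k_i<p$ is $(1,\ldots,1)$, contributing $(p-1)!\,\sigma_p$. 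All remaining monomials lie in free $C_p$-orbits of size $p$, so their combined contribution is a trace $\Tr(z_0)$ for some $z_0\in\pi^e_{2p}$. Thus $T = (p-1)!\,\sigma_p + \Tr(z_0)$, and
\[p\vot - \Tr(\beta_1^p) = p\,\sigma_p + p\,T = p\bigl(1+(p-1)!\bigr)\sigma_p + p\,\Tr(z_0).\]

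The crux is now Wilson's theorem: $(p-1)! \equiv -1 \pmod{p}$, so $1+(p-1)! = pM$ for some integer $M$. Since $\sigma_p$ is $C_p$-invariant we have $p\,\sigma_p = \Tr(\sigma_p)$, and therefore $p(1+(p-1)!)\sigma_p = pM\cdot\Tr(\sigma_p) = p\,\Tr(M\sigma_p)$. Assembling,
\[p\vot - \Tr(\beta_1^p) = p\,\Tr(M\sigma_p + z_0),\]
which is $p$ times a transferred class, as required. Rearranging gives $\Tr(\beta_1^p) = p\bigl(\vot - \Tr(M\sigma_p+z_0)\bigr)$, which both establishes the divisibility of $\Tr(\beta_1^p)$ by $p$ and exhibits $\Tr(\beta_1^p)/p$ as the restriction of the equivariant class obtained from $\vot$ by subtracting the genuine transfer of $M\sigma_p + z_0$, living in $\pi^{C_p}_{2\rho}\Sigma^{\infty}\Cmu$.

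The main obstacle is the necessity of Wilson's theorem. The multinomial analysis alone shows only that $p\vot - \Tr(\beta_1^p)$ lies in the image of the trace; the extra factor of $p$---and hence the divisibility of $\Tr(\beta_1^p)$ by $p$---only emerges once one observes that the integer $1+(p-1)!$ is itself divisible by $p$. The remaining ingredients (compatibility of the $C_p$-action with the ring structure on $\pi^e_*(\Sigma^\infty\Cmu)$ and the identification of the underlying of $\vot$ with $\sigma_p$) are formal consequences of the infinite loop space structure on $\Cmu$ and the explicit description of the norm in \Cref{cnstr-orientation}.
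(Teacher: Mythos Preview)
Your argument is correct and essentially parallels the paper's reduction to a polynomial identity in $\Sym^p_{\Z_{(p)}}(\bar\rho)$ (the paper's \Cref{lem:tr-nm}), but the algebraic verification is genuinely different. The paper passes to the one-dimensional quotient $(\Sym^p_{\F_p}(\bar\rho))^{C_p}/\Tr$, identifies it with $\F_p$ via the structural result \Cref{prop:sym-rhobar}, and then checks that both $\Nm$ and $\frac{\Tr(e_1^p)}{p}$ map to $1$ under the evaluation $e_i\mapsto 1$; the final numerics amount to $(p-1)^p\equiv -1\pmod{p^2}$. You instead expand the relation $(\sum e_i)^p=0$ directly by the multinomial theorem, isolate the unique cyclically invariant monomial $(p-1)!\,\sigma_p$, and invoke Wilson's theorem to absorb the constant $1+(p-1)!$ into a further factor of $p$. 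Your route is more elementary and self-contained---it avoids appealing to \Cref{prop:sym-rhobar} and produces an explicit witness $\Tr(M\sigma_p+z_0)$ for the difference---while the paper's route is cleaner conceptually but leans on the earlier representation-theoretic decomposition. One minor point: your citation of \Cref{cnstr-orientation} for the underlying description of the norm is slightly off, since that construction concerns norms on ring spectra $R$ rather than the norm on $\Sigma^{\infty}\Cmu$ coming from its infinite loop structure; the relevant input is \Cref{Cmunorm} together with the standard fact that the norm on a $C_p$-$\mathbb{E}_\infty$-ring restricts on underlying homotopy to the twisted product $x\cdot\gamma x\cdots\gamma^{p-1}x$.
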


\begin{proof}
  Identifying $\pi_{2} ^e (\Sigma^\infty \Cmu)$ with $\rhobar_{\Z_{(p)}}$ and using the nonunital $\mathbb{E}_\infty$-ring structure on $\Sigma^\infty \Cmu$, we obtain a map
  \[\Sym^{p} _{\Z_{(p)}} (\rhobar_{\Z_{(p)}}) \to \pi_{2p} ^e (\Sigma^\infty \Cmu)\]
  under which the norm class $\Nm$ maps to the image of $\vot$.
  The conclusion of the proposition then follows from \Cref{lem:tr-nm} below.
%
%Note that 
%\begin{eqnarray*}
%\Tr(x_1^p) &=&  x_1^p+(\gamma x_1)^p + \cdots + (\gamma^{p-2} x_1)^{p} + (\gamma^{p-1}x_1)^{p} \\
%&=& x_1^p + x_2^p + \cdots + x_{p-1}^{p} + (-x_1-x_2-\cdots-x_{p-1})^{p}
%\end{eqnarray*}
%On the other hand,
%$$p\vot = p (x_1)(x_2)(\cdots)(x_{p-1})(-x_1-x_2-\cdots-x_{p-1})$$
%
\end{proof}

\begin{lem} \label{lem:tr-nm}
  Let $\rhobar_{\Z_{(p)}}$ denote the reduced regular representation of $C_p$ over $\Z_{(p)}$, and let $e_1, \dots, e_p \in \rhobar_{\Z_{(p)}}$ denote generators which are cyclically permuted by $C_p$ and satisfy $e_1 + \dots + e_p = 0$. We set $\Nm = e_1 \cdots e_p \in \Sym^p _{\Z_{(p)}} (\rhobar_{\Z_{(p)}})$.

  Then $\Tr(e_1 ^p)$ is divisible by $p$, and $\Nm$ and $\frac{\Tr(e_1 ^p)}{p}$ differ by a transferred class in $\Sym^p _{\Z_{(p)}} (\rhobar_{\Z_{(p)}})$.
\end{lem}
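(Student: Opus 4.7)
The plan is to exploit the defining relation $e_1 + e_2 + \dots + e_p = 0$ in $\rhobar_{\Z_{(p)}}$ and raise both sides to the $p$th power in $\Sym^p_{\Z_{(p)}}(\rhobar_{\Z_{(p)}})$. By the multinomial theorem,
\[
0 = (e_1 + \dots + e_p)^p = \sum_{i_1 + \dots + i_p = p} \binom{p}{i_1,\dots,i_p}\, e_1^{i_1} \cdots e_p^{i_p}.
\]
I would partition this sum according to the $C_p$-action by cyclic permutation on the exponent sequences $(i_1,\dots,i_p)$.

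Since $C_p$ has prime order, every orbit has size $1$ or $p$, and the only fixed sequence is $(1,1,\dots,1)$. This fixed sequence contributes $p!\cdot \Nm$. The orbit of $(p,0,\dots,0)$ has multinomial coefficient $1$ and contributes precisely $\Tr(e_1^p)$. Every remaining orbit consists of sequences with all $i_j < p$, so its multinomial coefficient is divisible by $p$ (since $p$ is prime and no $i_j!$ in the denominator absorbs the factor of $p$ in the numerator), and the orbit contributes a transferred monomial times a $p$-divisible coefficient. Assembling the pieces, the identity becomes
\[
\Tr(e_1^p) + p!\cdot \Nm + p\cdot T = 0
\]
for some transferred class $T$, which immediately forces $p \mid \Tr(e_1^p)$ and, upon division by $p$, gives
\[
\frac{\Tr(e_1^p)}{p} = -(p-1)!\cdot \Nm - T.
\]

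To finish, I would invoke Wilson's theorem, $(p-1)! \equiv -1 \pmod{p}$, so that $1 + (p-1)! = pk$ for some integer $k$. Combined with the elementary observation that $p\cdot \Nm = \Tr(\Nm)$ (because $\Nm$ is $C_p$-invariant), this gives
\[
\Nm - \frac{\Tr(e_1^p)}{p} = \bigl(1+(p-1)!\bigr)\Nm + T = k\cdot \Tr(\Nm) + T,
\]
which is a transferred class, as required. The main subtlety is bridging between ``divisible by $p$'' and ``in the image of the transfer''; the identity $p\cdot \Nm = \Tr(\Nm)$ is exactly what makes this bridge, and Wilson's theorem is what guarantees the divisibility needed to invoke it. All remaining work is routine bookkeeping of $C_p$-orbits and multinomial coefficients.
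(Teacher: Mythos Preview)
Your proof is correct and takes a genuinely different route from the paper. You expand $(e_1+\cdots+e_p)^p=0$ via the multinomial theorem, decompose into $C_p$-orbits of exponent sequences, and finish with Wilson's theorem together with the observation $p\cdot\Nm=\Tr(\Nm)$. This is entirely self-contained and elementary.

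The paper instead establishes divisibility by expanding in the basis $e_1,\dots,e_{p-1}$ and invoking linearity of Frobenius mod~$p$, then argues the main claim by passing to $\F_p$ and using the structure result $\Sym^p_{\F_p}(\rhobar)\cong\triv\{\Nm\}\oplus\mathrm{free}$ (the Almkvist--Fossum proposition cited earlier). This identifies the quotient $(\Sym^p)^{C_p}/\Tr$ with $\F_p$, detected by the map induced from $e_i\mapsto 1$, and a short numerical check shows both $\Nm$ and $\Tr(e_1^p)/p$ map to $1$. Your approach avoids the dependence on that structural proposition and on reducing mod $p$, at the cost of a slightly longer combinatorial unwinding; the paper's approach ties the lemma back into the representation-theoretic decomposition already used elsewhere in the argument. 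Both reach the same endpoint, and in fact the numerical check $\frac{(p-1)-(p-1)^p}{p}\equiv 1\pmod p$ in the paper is essentially Wilson's theorem in disguise.
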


\begin{proof}
  To see that $\Tr (e_1 ^p)$ is divisible by $p$, we expand it out in terms of the basis $e_1, \dots, e_{p-1}$ of $\rhobar_{\Z_{(p)}}$:
  \[\Tr(e_1 ^p) = e_1 ^p + \dots + e_{p-1} ^p + (-e_1 - e_2 - \dots - e_{p-1})^p .\]
  It is clear from linearity of the Frobenius modulo $p$ that $\Tr(e_1 ^p)$ is divisible by $p$.
  Our next goal is to show that
  $\Nm - \frac{\Tr(e_1 ^p)}{p}$
  is a transferred class. It is clearly fixed by the $C_p$-action, so we wish to show that its image in
  \[\frac{\left( \Sym^p _{\Z_{(p)}} (\rhobar_{\Z_{(p)}}) \right)^{C_p}}{\Tr \left( \Sym^p _{\Z_{(p)}} (\rhobar_{\Z_{(p)}}) \right)} \]
  is zero.
  Since $p$ times any fixed point of $C_p$ is the transfer of an element, there is an isomorphism
  \[\frac{\left( \Sym^p _{\Z_{(p)}} (\rhobar_{\Z_{(p)}}) \right)^{C_p}}{\Tr \left( \Sym^p _{\Z_{(p)}} (\rhobar_{\Z_{(p)}}) \right)} \cong \frac{\left( \Sym^p _{\F_{p}} (\rhobar_{\F_p}) \right)^{C_p}}{\Tr \left( \Sym^p _{\F_{p}} (\rhobar_{\F_p}) \right)}.\]
  By \Cref{prop:sym-rhobar}, there is an isomorphism of $C_p$-representations
  \[ \Sym^p _{\F_{p}} (\rhobar_{\F_p}) \cong \triv_{\F_p} \{\Nm\} \oplus \mathrm{free}, \]
  so that any choice of $C_p$-equivariant map
  $\Sym^p _{\F_{p}} (\rhobar_{\F_p}) \to \triv_{\F_p}$
  which is nonzero on $\Nm$ restricts to an isomorphism
  \[ \frac{\left( \Sym^p _{\F_{p}} (\rhobar_{\F_p}) \right)^{C_p}}{\Tr \left( \Sym^p _{\F_{p}} (\rhobar_{\F_p}) \right)} \cong \triv_{\F_p}.  \]

  A choice of such a map may be made as follows. First, let $f : \rhobar_{\F_p} \to \triv_{\F_p}$ denote the equivariant map sending each $e_i$ to $1$. This induces a map $\Sym^p _{\F_p} (f) : \Sym^p _{\F_{p}} (\rhobar_{\F_p}) \to \Sym^p _{\F_{p}} (\triv_{\F_p}) \cong \triv_{\F_p}$ which sends $\Nm$ to $1$. We now need to show that the image of
  $\frac{\Tr(e_1 ^p)}{p}$
  under $\Sym^p _{\F_p} (f)$ is also equal to $1$. Writing
  \[\frac{\Tr(e_1 ^p)}{p} = \frac{e_1 ^p + \dots + e_{p-1} ^p + (-e_1 - e_2 - \dots - e_{p-1})^p}{p} ,\]
  we find that its image of $\Sym^p _{\F_p} (f)$ is equal to
  \[\frac{p-1 - (p-1)^p}{p} = \frac{p-1 - (-1 + O(p^2))}{p} \equiv 1 \mod p,\]
  as desired.
\end{proof}

\Cref{votalt} can be read as the statement that $\frac{\Tr(\beta_1^{p})}{p}$ is a formula for $\vot \in \pi_{2\rho}^{C_p} \Sigma^{\infty} \Cmu$, if one is only interested in $\vot$ modulo transfers.  We often find this formula for $\vot$ to be more useful in computational contexts.

\begin{cnv}
For the remainder of this section, we fix a $C_p$-ring $R$ together with a $\mu_p$-orientation
$$\Sigma^{\infty} \Cmu \to \Sigma^{1+\Yright} R.$$
\end{cnv}

\begin{dfn} \label{undorientation}
The $\mu_p$-orientation of $R$ gives rise to a map
  $$(\Sigma^{\infty} \mathbb{CP}^{\infty}_{\mu_p})^e \to (\Sigma^{1+\Yright} R)^e ,$$
which under our fixed identification of $(\mathbb{CP}^{\infty}_{\mu_p})^e$ is given by a map
  $$\Sigma^{\infty} (\mathbb{CP}^{\infty})^{\times p-1} \to \bigoplus_{p-1} \Sigma^2 R.$$
By mapping in the first of the $(p-1)$ copies of $\mathbb{CP}^{\infty}$, and then projecting to the first of the $(p-1)$ copies of $R$, we obtain \emph{the underlying complex orientation of $R$}.
\end{dfn}

\begin{wrn}
While it is convenient to give formulas in terms of the underlying complex orientation of \Cref{undorientation}, we stress once again that this is non-canonical, depending on \Cref{coordinates}.  There is no \emph{canonical} classical complex orientation associated to a $\mu_p$-oriented $C_p$-ring.  
\end{wrn}

\begin{ntn} \label{extractclassicalv1}
Using \Cref{defclassicalv1}, the underlying complex orientation of $R$ gives rise to a class $v_1=\beta_1^p\in \pi^{e}_{2p-2} R$.
\end{ntn}

\begin{ntn}
  Recall our fixed non-canonical identification $(S^{1+\Yright})^e \simeq \bigoplus_{p-1} S^2$.  Let $y_i \in \pi_{2} ^e S^{1+\spoke}$ correspond to the $i$th copy of $S^2$, so that we have
  \begin{enumerate}
    \item $\gamma (y_i) = y_{i+1}$ if $1 \leq i \leq p-2$, and
    \item $\gamma (y_{p-1}) = -y_1 - \dots - y_{p-1}$.
  \end{enumerate}
  Then a generic class
  $$r \in \pi_{2p}^e(\Sigma^{1+\Yright} R) \cong \pi_{2} ^e S^{1+\spoke} \otimes \pi_{2p-2} ^e R$$
may be written as
$$r = y_1 \otimes r_1 + y_2 \otimes r_2 + \cdots + y_{p-1} \otimes r_{p-1},$$
where $r_i \in \pi^{e}_{2p-2} R$.
\end{ntn}

The key relationship between the equivariant $\vot$ and non-equivariant $v_1$ is expressed in the following lemma:

\begin{lem} \label{lem:v1-spoke}
The class $v_1=\beta_1^p \in \pi^{e}_{2p} \Sigma^{\infty} \Cmu$ maps to $y_1 \otimes v_1$ plus a multiple of $p$ in $\pi^{e}_{2p}(\Sigma^{1+\Yright} R).$
\end{lem}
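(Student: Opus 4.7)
The plan is to reduce the statement to the non-equivariant divisibility result \Cref{divbyp} by examining the image of $\beta_1^p$ one coordinate at a time. Using the identification $(S^{1+\spoke})^e \simeq \bigvee_{p-1} S^2$ and the product $\mathbb{E}_\infty$-structure on $(\Cmu)^e \simeq (\CP^\infty)^{\times p-1}$ (coming from the description of $\Cmu$ as an infinite loop space in \Cref{prop:cpmup-as-em-space}, combined with the fact that $(\Sigma^{1+\spoke}\HZu)^e$ splits as a direct sum), the underlying $\mu_p$-orientation becomes a map of spectra
\[ \psi : \Sigma^{\infty} (\CP^\infty)^{\times p-1} \longrightarrow \bigoplus_{p-1} \Sigma^2 R.\]
For each $j \in \{1,\ldots,p-1\}$, let $g_j := \pi_j \circ \psi \circ \Sigma^\infty i_1$ denote the composite of the first-factor inclusion, the map $\psi$, and the projection to the $j$th summand. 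The key observation is that $i_1$ is a non-unital $\mathbb{E}_\infty$-ring map with respect to the product structure on $(\CP^\infty)^{\times p-1}$, so $\beta_1^p = (i_1)_*(\beta^p)$ in $\pi_{2p}^e \Sigma^\infty \Cmu$, and consequently
\[ \psi(\beta_1^p) \;=\; \sum_{j=1}^{p-1} y_j \otimes g_j(\beta^p) \;\in\; \pi_{2p}^e(\Sigma^{1+\spoke} R).\]

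Next I pin down each $g_j$ on $\pi_2$. The $\mu_p$-orientation axiom forces the composite $S^{1+\spoke} \to \Cmu \to \Sigma^{1+\spoke} R$ to agree with $\Sigma^{1+\spoke}$ of the unit map. Passing to underlying $\pi_2$ and using the convention from \Cref{coordinates} that $y_i \mapsto \beta_i$ under the first arrow, one concludes $\psi(\beta_i) = y_i \otimes 1_R$ exactly. In particular $g_1$ is the underlying complex orientation $f_1$ of \Cref{undorientation}, while for $j \ge 2$ the map $g_j$ induces the zero homomorphism on $\pi_2$.

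Finally, assembling the pieces: by the definition of the classical $v_1$ attached to the complex orientation $f_1$ (see \Cref{defclassicalv1} and \Cref{extractclassicalv1}), we have $g_1(\beta^p) = f_1(\beta^p) = v_1$. For $j \ge 2$, the hypothesis of \Cref{divbyp} is satisfied by $g_j$, so $g_j(\beta^p)$ lies in $p \cdot \pi_{2p-2}^e R$. Substituting these values into the expansion of $\psi(\beta_1^p)$ above yields the claimed formula. The only mildly subtle step is verifying that $i_1$ is a non-unital $\mathbb{E}_\infty$-ring map in our setup; once this is in hand, the remainder of the argument is a direct unpacking of definitions followed by a single appeal to \Cref{divbyp}.
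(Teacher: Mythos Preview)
Your argument is correct and follows essentially the same route as the paper's proof: write the image of $\beta_1^p$ as $\sum_j y_j \otimes r_j$, identify $r_1$ with the classical $v_1$ via the underlying complex orientation (\Cref{undorientation}), and show each $r_j$ for $j\ge 2$ is divisible by $p$ by applying \Cref{divbyp} to the $j$th component map, which vanishes on $\pi_2$ by the $\mu_p$-orientation axiom. You have simply made explicit the intermediate step that $\beta_1^p = (i_1)_*(\beta^p)$ via the infinite loop (hence non-unital $\mathbb{E}_\infty$) structure on $(\Cmu)^e \simeq (\CP^\infty)^{\times p-1}$, which the paper leaves implicit.
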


\begin{proof}
The class $\beta_1^p$ maps to $y_1 \otimes r_1 + y_2 \otimes r_2 + \cdots + y_{p-1} \otimes r_{p-1}$ for some collection of elements $r_1,r_2,...,r_{p-1} \in \pi^{e}_{2p-2} R$.
  
By \Cref{defclassicalv1}, $r_1=v_1$, so it suffices to show that each of $r_2,...,r_{p-1}$ is divisible by $p$.  These statements in turn each follow by application of \Cref{divbyp}.
\end{proof}

At last, we are ready to state the main result of this section:

\begin{thm} \label{thm:votFormula}
Suppose that the underlying homotopy groups $\pi_*^{e} R$ are torsion-free.  Then the  class $\vot \in \pi_{2p}^{e}(\Sigma^{1+\Yright} R)$ is given, modulo transfers, by the class
$$y_1 \otimes \frac{v_1-\gamma^{p-1} v_1}{p} + y_2 \otimes \frac{\gamma v_1-v_1}{p} + \cdots + y_{p-1} \otimes \frac{\gamma^{p-2} v_1 - \gamma^{p-3} v_1}{p}.$$
\end{thm}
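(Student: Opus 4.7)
The proof plan is to compute the image of $\vot$ in $\pi^{e}_{2p}(\Sigma^{1+\Yright} R)$ using the alternative presentation given in \Cref{votalt}. That proposition tells us that, modulo transferred classes, $\vot$ is represented by $\frac{1}{p}\Tr(\beta_1^p) \in \pi^{e}_{2p}(\Sigma^\infty \Cmu)$; combined with the torsion-free hypothesis on $\pi^{e}_{*} R$, this reduces the problem to computing the image of $\frac{1}{p}\Tr(\beta_1^p)$ under the $\mu_p$-orientation.

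First I would apply \Cref{lem:v1-spoke} to record that the image of $\beta_1^p$ under the orientation is $y_1 \otimes v_1 + p\, m$ for some auxiliary class $m \in \pi^{e}_{2p}(\Sigma^{1+\Yright} R)$. The $C_p$-equivariance of the orientation then determines the image of each Galois conjugate $\gamma^i \beta_1^p = (\gamma^i \beta_1)^p$ as $(\gamma^i y_1)\otimes (\gamma^i v_1) + p\,\gamma^i m$. Summing over $i = 0,\ldots,p-1$ and using the explicit action $\gamma^i y_1 = y_{i+1}$ for $i \le p-2$ together with $\gamma^{p-1} y_1 = -(y_1+\cdots+y_{p-1})$, the $y_j$-coefficient collapses into a difference $\gamma^{j-1} v_1 - \gamma^{p-1} v_1$, plus the transferred correction $p\,\Tr(m)$. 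Each such difference is divisible by $p$ because of the classical congruence $\gamma v_1 \equiv v_1 \pmod p$, so dividing by $p$ produces a well-defined element of $\pi^{e}_{2p}(\Sigma^{1+\Yright} R)$.

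The final step is to rearrange the resulting expression into the telescoping form stated in the theorem. This amounts to adding an appropriate transferred class so that each appearance of $\gamma^{p-1} v_1$ in the denominators can be traded for the cyclically shifted term $\gamma^{j-2} v_1$. It will be convenient here to introduce the basic element $\widetilde{w} = (\gamma v_1 - v_1)/p$, which records the first-order failure of $v_1$ to be $\gamma$-fixed, and to rewrite both the computed expression and the target formula in terms of $\widetilde{w}$ and the representation-theoretic relations among the $y_j$.

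This last rearrangement is the main bookkeeping obstacle in the proof: one must track the transferred corrections carefully, since the formula for $\vot$ is only determined up to the image of $N = \sum_i \gamma^i$ acting on $\pi^{e}_{2p}(\Sigma^{1+\Yright} R)$. The torsion-free assumption on $\pi^{e}_{*} R$ is essential in order for all of the divisions by $p$ appearing in the derivation to be unambiguous.
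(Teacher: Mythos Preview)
Your plan matches the paper's own (two-sentence) proof exactly: reduce via \Cref{votalt} to computing $\frac{1}{p}\Tr(\beta_1^p)$ modulo transfers, then use \Cref{lem:v1-spoke} together with $C_p$-equivariance of the orientation to carry out that computation.  Your derivation leading to the expression
\[
\sum_{j=1}^{p-1} y_j \otimes \frac{\gamma^{j-1} v_1 - \gamma^{p-1} v_1}{p}
\]
is precisely what the paper means by ``compute directly,'' and nothing more is needed.

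A word of caution about the final rearrangement you flag as the ``main bookkeeping obstacle.''  The expression you compute above is visibly $C_p$-invariant (it is $\frac{1}{p}N(y_1\otimes v_1)$ up to the $N(m)$ correction), and every transferred class is invariant as well.  But the formula displayed in the theorem, with $y_j$-coefficient $\tfrac{\gamma^{j-1}v_1-\gamma^{j-2}v_1}{p}$ for $j\ge 2$, is \emph{not} $C_p$-invariant when $p>2$ (apply $\gamma$ and compare $y_1$-coefficients).  Consequently the two expressions cannot differ by a transfer, and your planned telescoping will not close up.  This appears to be a small slip in the displayed formula (and in the description of $N(y_1\otimes x)$ in \Cref{VOTV1invariance}); the expression you actually derive is the one that makes the argument work.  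The discrepancy is harmless for the applications in \Cref{sec:Image}, which use only the $\mathbb{Z}_{(p)}$-span of the coefficients, and that span is the same for both formulas.
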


\begin{proof}
  By \Cref{votalt}, it is equivalent to show the above formula determines $\Tr(\beta_1 ^p)/p \in \pi_{2p} ^e (\Sigma^{1+\spoke} R)$ modulo transfers. But this may be computed directly from \Cref{lem:v1-spoke}.
%
  %$\Tr(\beta_1^p)$ maps to $p$ times the above class plus $p$ times a transferred class.
%Therefore, by the torsion-freeness assumption, $\Tr(\beta_1^p)/p$ maps to the above class plus a transfered class.
\end{proof}

\begin{rmk} \label{VOTV1invariance}
Consider the class 
$$y_1 \otimes \frac{v_1-\gamma^{p-1} v_1}{p} + y_2 \otimes \frac{\gamma v_1-v_1}{p} + \cdots + y_{p-1} \otimes \frac{\gamma^{p-2} v_1 - \gamma^{p-3} v_1}{p}.$$
of \Cref{thm:votFormula}.  If in this formula we replace $v_1$ by $v_1' = v_1 + px$, for an arbitrary class $x \in \pi_{2p-2}^{e} R$, the resulting expression differs from the original by
$$y_1 \otimes (x-\gamma^{p-1} x) + y_2 \otimes (\gamma x-x) + \cdots + y_{p-1} \otimes (\gamma^{p-2} x - \gamma^{p-3} x).$$
This is exactly the transfer, in $\pi_{2p}^{e} (\Sigma^{1+\Yright} R)$, of $y_1 \otimes x$.  Thus, altering $v_1$ by a multiple of $p$ does not change the class $\vot$ modulo transfers.
\end{rmk}

\section{The span of $\vot$ in height $p-1$ theories} \label{sec:Image}
In this section, we use the formula of \Cref{thm:votFormula} to compute the span of $\vot$ in the height $p-1$ theories $E_{p-1}$ and $\tmf(2)$, which we verified were $\mu_p$-orientable in \Cref{sec:Hmtpy-Purity}.
Our main result, stated in Theorems \ref{thm:tmf-section6} and \ref{thm:E-section6}, proves that the span of $\vot$ generates the homotopy of these theories in a suitable sense.
This demonstrates a height-shifting phenomenon in equivariant homotopy theory: though these theories are height $p-1$ classically, the fact that their homotopy is generated by $\vot$ indicates that they should be regarded as height $1$ objects in $C_p$-equivariant homotopy theory.

\begin{ntn}
Let $R$ denote a $C_p$-ring spectrum, equipped with a $\mu_p$-orientation
$$\Sigma^{\infty} \Cmu \to \Sigma^{1+\Yright} R.$$
Precomposition with $\vot$ then yields a map
$$S^{2\rho} \to \Sigma^{1+\Yright} R,$$
which by the dualizability of $S^{1+\Yright}$ is equivalent to a map of $C_p$-spectra
$$S^{2\rho-1-\Yright} \to R.$$
Engaging in a slight abuse of notation, we will throughout this section denote this map by
$$\vot:S^{2\rho-1-\Yright} \to R.$$
\end{ntn}

\begin{dfn}
Given a $\mu_p$-oriented $C_p$-ring $R$, applying $\pi_{2p-2}^{e}$ gives a homomorphism of $\mathbb{Z}_{(p)}[C_p]$-modules
$$\pi_{2p-2}^{e} \vot:\pi_{2p-2}^{e} S^{2\rho-1-\Yright} \to \pi_{2p-2}^{e} R.$$
\end{dfn}

The main theorems of this section are as follows:

\begin{thm} \label{thm:tmf-section6}
Suppose that
$$\Sigma^{\infty} \Cmu \to \Sigma^{1+\Yright} \tmf(2)$$
is any $\mu_3$-orientation of $\tmf(2)$.  Then the map
$\pi_{4}^{e} \vot:\pi_4^{e} S^{2\rho-1-\Yright} \to \pi_4^{e} \tmf(2)$
  is an isomorphism of $\mathbb{Z}_{(3)}$-modules, and thus also of $\mathbb{Z}_{(3)} [C_3]$-modules.
\end{thm}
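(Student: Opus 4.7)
Since $(S^{2\rho-1-\spoke})^e$ splits non-equivariantly as $S^4 \vee S^4$, both the source $\pi_4^e S^{2\rho-1-\spoke}$ and the target $\pi_4^e \tmf(2) = \Z_{(3)}\{\lambda_1, \lambda_2\}$ of $\pi_4^e \vot$ are free $\Z_{(3)}$-modules of rank two, so the claim reduces to showing that the determinant of $\pi_4^e \vot$ is a unit in $\Z_{(3)}$, i.e.\ nonzero modulo $3$.

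My plan is to compute this determinant using \Cref{thm:votFormula}, which at $p = 3$ states that modulo transfers
\[\vot \equiv y_1 \otimes \frac{v_1 - \gamma^2 v_1}{3} + y_2 \otimes \frac{\gamma v_1 - v_1}{3} \in \pi_6^e(\Sigma^{1+\spoke}\tmf(2)).\]
By \Cref{VOTV1invariance} I may replace $v_1$ by any class congruent to it modulo $3$, and \Cref{tmf(2)formulae} furnishes such a lift, namely $v \coloneqq -\lambda_1 - \lambda_2$. A quick calculation using $\gamma\lambda_1 = \lambda_2 - \lambda_1$ and $\gamma\lambda_2 = -\lambda_1$ (cf.\ \Cref{tmf(2)calc}) gives $(v - \gamma^2 v)/3 = -\lambda_2$ and $(\gamma v - v)/3 = \lambda_1$, so that modulo transfers
\[\vot \equiv y_1 \otimes (-\lambda_2) + y_2 \otimes \lambda_1.\]

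Dualizing via the dualizability of $S^{1+\spoke}$, the coefficients of $y_1$ and $y_2$ record the images of $\pi_4^e \vot$ on a natural basis $z_1, z_2$ of $\pi_4^e S^{2\rho-1-\spoke}$. According to \Cref{VOTV1invariance}, the transfer ambiguity is parameterized by a class $x \in \pi_4^e \tmf(2)$, giving
\[\pi_4^e \vot (z_1) = -\lambda_2 + (x - \gamma^2 x), \qquad \pi_4^e \vot (z_2) = \lambda_1 + (\gamma x - x).\]
Writing $x = c_1 \lambda_1 + c_2 \lambda_2$ and expanding via the $C_3$-action, the matrix of $\pi_4^e \vot$ becomes explicit in $c_1, c_2$, and its determinant simplifies to $1 + 3(c_1^2 + c_1 c_2 + c_2^2 - c_1 - c_2)$, which is congruent to $1$ modulo $3$ for every $c_1, c_2$. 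Hence $\pi_4^e \vot$ is an isomorphism for every $\mu_3$-orientation. The main obstacle is really just the bookkeeping surrounding the duality and the transfer ambiguity; once the explicit matrix is in hand, the key observation is that the determinant differs from $1$ by a multiple of $3$ uniformly in the ambiguity parameter.
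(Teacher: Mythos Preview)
Your proof is correct and follows the same overall strategy as the paper: use \Cref{thm:votFormula} together with \Cref{VOTV1invariance} and \Cref{tmf(2)formulae} to take $v_1 = -\lambda_1 - \lambda_2$, then compute. The one genuine difference is in how the transfer ambiguity is disposed of. The paper appeals to \Cref{lem:transferlemma}, which shows that $\Hom_{\F_p[C_p]}(\bar\rho,\bar\rho)$ is a local ring with maximal ideal exactly the transferred endomorphisms; hence adding a transfer cannot affect invertibility, and it suffices to check that $-\lambda_2,\lambda_1$ span modulo $3$. You instead parameterize the ambiguity by $x = c_1\lambda_1 + c_2\lambda_2$, write down the resulting $2\times 2$ matrix, and compute its determinant directly as $1 + 3(c_1^2 + c_1 c_2 + c_2^2 - c_1 - c_2)$. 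Your route is more elementary in that it avoids the representation-theoretic lemma, but it is specific to $p=3$; the paper's \Cref{lem:transferlemma} works uniformly in $p$ and is reused verbatim in the proof of \Cref{thm:E-section6} for $E_{p-1}$.
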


\begin{thm} \label{thm:E-section6}
Suppose that
$$\Sigma^{\infty} \Cmu \to \Sigma^{1+\Yright} E_{p-1}$$
is any $\mu_p$-orientation of $E_{p-1}$.  Then the image of $\pi_{2p-2}^{e} \vot$ in $\pi_{2p-2}^{e} E_{p-1}$ maps surjectively onto the degree $2p-2$ component of $\pi_*(E_{p-1})/(p,\mathfrak{m}^2)$.
\end{thm}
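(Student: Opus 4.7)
The plan combines the explicit formula of \Cref{thm:votFormula} with the structural analysis of $\pi_* E_{p-1}$ from \Cref{sec:Hmtpy-Purity}: in particular, the element $v = (v_1 - \gamma v_1)/p$ introduced in \Cref{sec:Ethy} and the fact recorded in \Cref{rmk:Ep-1-forward} that $v$ induces an isomorphism $\bar\rho_{\F_{p^{p-1}}} \xrightarrow{\sim} \pi_{2p-2} E_{p-1}/(p, \mathfrak{m}^2)$.

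Since $\pi^e_* E_{p-1}$ is torsion-free, \Cref{thm:votFormula} applies to any $\mu_p$-orientation of $E_{p-1}$. After fixing a lift $v_1 \in \pi^e_{2p-2} E_{p-1}$ of the canonical element of $\pi^e_{2p-2} E_{p-1}/p$, the theorem identifies the images of the $p-1$ generators of $\pi^e_{2p-2}(S^{2\rho-1-\spoke}) \cong \bar\rho_{\Z_{(p)}}$ under $\pi^e_{2p-2}(\vot)$, modulo the transfer ambiguity of \Cref{VOTV1invariance}, as the elements
\[
\tfrac{v_1 - \gamma^{p-1}v_1}{p}, \quad \tfrac{\gamma v_1 - v_1}{p}, \quad \tfrac{\gamma^2 v_1 - \gamma v_1}{p}, \quad \ldots, \quad \tfrac{\gamma^{p-2} v_1 - \gamma^{p-3} v_1}{p}.
\]

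Telescoping yields $(\gamma^k v_1 - \gamma^{k-1} v_1)/p = -\gamma^{k-1} v$, so these images form the set $\{-\gamma^{p-1} v, -v, -\gamma v, \ldots, -\gamma^{p-3} v\}$. The single missing $\gamma$-conjugate $-\gamma^{p-2} v$ is recovered as a $\Z_{(p)}$-linear combination of the others using the relation $v + \gamma v + \cdots + \gamma^{p-1} v = 0$ from \Cref{sec:Ethy}, so the image of $\pi^e_{2p-2}(\vot)$ contains the full $C_p$-orbit of $v$. Reducing modulo $(p, \mathfrak{m}^2)$ and invoking \Cref{rmk:Ep-1-forward}, these conjugates generate $\pi^e_{2p-2} E_{p-1}/(p, \mathfrak{m}^2) \cong \bar\rho_{\F_{p^{p-1}}}$, establishing the desired surjection.

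The main obstacle is the transfer ambiguity. By \Cref{VOTV1invariance}, replacing the lift of $v_1$ by $v_1 + pa$ shifts $v$ to $v + a - \gamma a$; this modified element still satisfies the two key properties of \Cref{sec:Ethy}---being a unit in $\pi_* E_{p-1}$ and having vanishing $C_p$-trace---so \Cref{rmk:Ep-1-forward} applies equally well to it. The surjection conclusion is therefore insensitive to the choice of lift, and the theorem holds for any $\mu_p$-orientation.
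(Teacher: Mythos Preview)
Your argument follows the paper's: reduce via \Cref{thm:votFormula} to showing that the $C_p$-conjugates of $v$ span the quotient, then invoke \Cref{rmk:Ep-1-forward}. The difference lies in how the transfer ambiguity in \Cref{thm:votFormula} is dispatched. The paper first reduces modulo $(p,\mathfrak{m}^2)$ and applies \Cref{lem:transferlemma}, which says directly that adding a transferred endomorphism to an $\F_p[C_p]$-map $\bar\rho \to \bar\rho$ cannot affect whether it is an isomorphism; only afterwards does it use \Cref{VOTV1invariance} to fix the standard lift of $v_1$.

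Your route---absorbing the transfer into the choice of lift and then asserting that $v' = v + (a - \gamma a)$ still enjoys the two key properties of \Cref{sec:Ethy}---does work, but the claim that $v'$ remains a \emph{unit} is not automatic and you do not justify it. What is needed is the observation that $C_p$ acts trivially on the one-dimensional $\F_{p^{p-1}}$-vector space $\pi^e_{2p-2} E_{p-1}/\mathfrak{m}\cdot\pi^e_{2p-2} E_{p-1}$ (for instance because $p \nmid |\F_{p^{p-1}}^\times|$), so that $a - \gamma a$ always lies in $\mathfrak{m}\cdot\pi^e_{2p-2} E_{p-1}$ and hence $v' \equiv v \pmod{\mathfrak{m}}$ is still a unit. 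You should also make explicit, when invoking \Cref{VOTV1invariance}, that \emph{every} transfer in $\pi^e_{2p}(\Sigma^{1+\spoke} E_{p-1})$ arises as $\Tr(y_1 \otimes x)$ for some $x$ (immediate from $C_p$-equivariance of the transfer), so that an arbitrary transfer really can be absorbed into a change of $v_1$. With these two observations added your argument is complete; alternatively, a single appeal to \Cref{lem:transferlemma} as in the paper handles both points at once.
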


\begin{rmk} \label{rmk:FpReduction}
  Note that the map $\pi_4^{e} S^{2\rho-1-\Yright} \to \pi_4^{e} \tmf(2)$ of \Cref{thm:tmf-section6} is a map of rank $2$ free $\mathbb{Z}_{(3)}$-modules.  Thus, it is an isomorphism if and only if its mod $3$ reduction is, which is a map of rank $2$ vector spaces over $\mathbb{F}_3$.

Similarly, the degree $2p-2$ component $\pi_*(E_{p-1})/(p,\mathfrak{m}^2)$ is a rank $p-1$ vector space over $\mathbb{F}_p$, generated by $u^{p-1}, u_1 u^{p-1}, u_2 u^{p-1}, \cdots, u_{p-2} u^{p-1}$.  The map $\pi^{e}_{2p-2} S^{2\rho-1-\Yright} \to \pi_{2p-2}(E_{p-1}/(p,\mathfrak{m}^2))$ of \Cref{thm:E-section6} factors through the mod $p$ reduction of its domain, after which it becomes a map of rank $p-1$ vector spaces over $\mathbb{F}_p$.

  Both Theorems \ref{thm:tmf-section6} and \ref{thm:E-section6} thus reduce to a question of whether maps of rank $p-1$ vector spaces over $\mathbb{F}_p$ are isomorphisms.  These maps are furthermore equivariant, or maps of $\mathbb{F}_p[C_p]$-modules, with the actions of $C_p$ given by reduced regular representations. We will therefore find \Cref{lem:transferlemma} below particularly useful. First, we recall some basic facts from representation theory.
\end{rmk}

\begin{rec}
  Given two $\F_p [C_p]$-modules $V$ and $W$, the space $\Hom_{\F_p} (V,W)$ inherits the structure of a $C_p$-module via conjugation, where $\gamma \in C_p$ sends $F : V \to W$ to $\gamma \circ F \circ \gamma^{-1}$.
  Then there is an identification
\[\Hom_{\F_p} (V,W)^{C_p} = \Hom_{\F_p [C_p]} (V,W),\]
so that the transfer determines a linear map
\[\Tr : \Hom_{\F_p} (V,W) \to \Hom_{\F_p [C_p]} (V,W).\]
\end{rec}

\begin{lem} \label{lem:transferlemma}
Let $\bar{\rho}$ denote the $\mathbb{F}_p[C_p]$-module corresponding to the reduced regular representation of $C_p$.  Then a homomorphism
$$\phi \in \Hom_{\mathbb{F}_p[C_p]}(\bar{\rho},\bar{\rho})$$
is an isomorphism if and only if $\phi+\Tr(\psi)$ is for any transferred homomorphism $\Tr(\psi)$.  More precisely,
$\Hom_{\mathbb{F}_p[C_p]}(\bar{\rho},\bar{\rho})$ is a local $\mathbb{F}_p[C_p]$-algebra, with maximal ideal the ideal of transferred homomorphisms.%  In particular, a class $\phi \in \Hom_{\mathbb{F}_p[C_p]}(\bar{\rho},\bar{\rho})$ is a unit if and only if it is modulo transfers.
\end{lem}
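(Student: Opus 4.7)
The plan is to identify $\Hom_{\F_p[C_p]}(\bar\rho,\bar\rho)$ explicitly as a commutative local ring and then match its maximal ideal with the image of the transfer. Since $\F_p[C_p] \cong \F_p[t]/t^p$ for $t = g - 1$ and a generator $g \in C_p$, the augmentation ideal $\bar\rho$ is the principal ideal $(t)$, which is a cyclic $\F_p[C_p]$-module with annihilator $(t^{p-1})$. Consequently
\[\Hom_{\F_p[C_p]}(\bar\rho,\bar\rho) \cong \F_p[C_p]/(t^{p-1}) = \F_p[t]/t^{p-1},\]
a commutative local $\F_p$-algebra of dimension $p-1$ with maximal ideal $\mathfrak{m} = (t)$ and residue field $\F_p$. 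The lemma then reduces to identifying the image of the transfer with $\mathfrak{m}$.

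First I would verify that $\Tr(\End_{\F_p}(\bar\rho)) \subseteq \mathfrak{m}$. The quotient $\Hom_{\F_p[C_p]}(\bar\rho,\bar\rho) \twoheadrightarrow \F_p$ to the residue field is realized by sending an equivariant endomorphism to its induced action on the trivial quotient $\bar\rho/t\bar\rho \cong \F_p$. Since $C_p$ acts trivially on this quotient, any transferred endomorphism $\Tr(\psi) = \sum_{i=0}^{p-1} g^i \psi g^{-i}$ induces the map $p\,\bar{\psi} = 0$ on $\bar\rho/t\bar\rho$, and hence lies in $\mathfrak{m}$.

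For the reverse inclusion I would compute the codimension of the transfer via Tate cohomology, namely the $\F_p$-dimension of $\hat{H}^0(C_p, \End_{\F_p}(\bar\rho))$. Using the self-duality $\bar\rho^* \cong \bar\rho$ (both are the unique indecomposable $\F_p[C_p]$-module of dimension $p-1$), I would tensor the short exact sequence $0 \to \bar\rho \to \F_p[C_p] \to \F_p \to 0$ with $\bar\rho^*$ to obtain
\[0 \to \bar\rho \otimes \bar\rho^* \to \F_p[C_p] \otimes \bar\rho^* \to \bar\rho^* \to 0,\]
whose middle term is a free $\F_p[C_p]$-module (via the standard untwisting $g \otimes v \mapsto g \otimes g^{-1} v$) and so has vanishing Tate cohomology. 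The resulting long exact sequence yields $\hat{H}^0(C_p, \bar\rho \otimes \bar\rho^*) \cong \hat{H}^{-1}(C_p, \bar\rho^*)$, and, using $\bar\rho^* \cong \bar\rho$, a second application of the original short exact sequence identifies this with $\hat{H}^{-2}(C_p, \F_p) \cong \F_p$. The image of the transfer thus has codimension exactly $1$ in $\Hom_{\F_p[C_p]}(\bar\rho,\bar\rho)$, forcing $\Tr(\End_{\F_p}(\bar\rho)) = \mathfrak{m}$.

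The first assertion of the lemma is then formal: in a local ring, an element is a unit if and only if it lies outside the maximal ideal, a property preserved under adding any element of $\mathfrak{m}$. The main obstacle is the Tate-cohomological dimension count in the third step, since a naive count using invariants and coinvariants alone cannot rule out the possibility that $\Tr(\End_{\F_p}(\bar\rho))$ is a proper subspace of $\mathfrak{m}$; one truly needs the long exact sequence to pin down the codimension.
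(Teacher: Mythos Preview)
Your overall strategy is sound and the conclusion is correct, but the argument in your second step contains a subtle gap. You claim that $\Tr(\psi)$ induces ``the map $p\bar\psi = 0$'' on the coinvariants $\bar\rho/t\bar\rho$, implicitly assuming that an arbitrary $\mathbb{F}_p$-linear $\psi$ descends to a map $\bar\psi$ on this quotient. It need not: $\psi$ does not in general preserve $t\bar\rho$. The statement you want is still true, and the cleanest fix is to observe that the norm element $N = 1 + g + \cdots + g^{p-1} = t^{p-1}$ annihilates $\bar\rho = (t)$; then for the composite with the projection $\pi$ one computes
\[
\pi\Bigl(\sum_i g^i \psi(g^{-i}v)\Bigr) = \sum_i \pi\bigl(\psi(g^{-i}v)\bigr) = (\pi \circ \psi)\Bigl(\sum_i g^{-i}v\Bigr) = (\pi \circ \psi)(Nv) = 0.
\]
The paper avoids this wrinkle by working with invariants rather than coinvariants: for $x \in \bar\rho^{C_p}$ one has $\Tr(\psi)(x) = \sum_i g^i\psi(x) = N\cdot\psi(x) = 0$ directly, using the same fact that $N$ kills $\bar\rho$.

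As for the overall approach, you and the paper diverge in two places. For locality, the paper invokes the general fact that endomorphism rings of uniserial modules are local, whereas you compute $\End_{\mathbb{F}_p[C_p]}(\bar\rho) \cong \mathbb{F}_p[t]/t^{p-1}$ explicitly; your route is more concrete and also yields the dimension $p-1$ for free. For the codimension of the transfer image, the paper uses the representation-theoretic splitting $\Hom_{\mathbb{F}_p}(\bar\rho,\bar\rho) \cong \mathbbm{1}\{\mathrm{id}\} \oplus \mathrm{free}$ (so that transfer kills the $\mathbbm{1}$ summand and surjects onto the invariants of the free part, giving image of dimension exactly $p-2$), while you run a Tate-cohomology long exact sequence. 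Both are valid; the paper's decomposition is quicker once one knows it, so your closing remark that ``one truly needs the long exact sequence'' overstates the case.
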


\begin{proof}
  Note that $\rhobar$ is a uniserial $\F_p [C_p]$-module, i.e. its submodules are totally ordered by inclusion. Since the endomorphism ring of a uniserial module over a Noetherian ring is local \cite[Proposition 20.20]{LamNC}, the ring $\Hom_{\F_p [C_p]} (\rhobar,\rhobar)$ is local.

  There is an identification $\rhobar ^{C_p} = \triv$, so we obtain a ring homomorphism
  \[\Hom_{\F_p [C_p]} (\rhobar,\rhobar) \to \Hom_{\F_p [C_p]} (\rhobar^{C_p}, \rhobar^{C_p}) = \Hom_{\F_p [C_p]} (\triv, \triv) = \F_p. \]
  Since this homomorphism is clearly surjective, we learn that its kernel must be equal to the maximal ideal of $\Hom_{\F_p [C_p]} (\rhobar,\rhobar)$.

  On the other hand, for any $x \in \rhobar^{C_p}$ and $\psi \in \Hom_{\F_p} (\rhobar,\rhobar)$, we have
  \[\Tr(\psi) (x) = \sum_{i = 0} ^{p-1} \gamma^i \psi( \gamma^{-i} x) = \sum_{i = 0} ^{p-1} \gamma^i \psi(x) = \Tr(\psi(x)) = 0,\]
  where the last equality follows from the fact that the transfer is zero on $\rhobar$. It follows that $\Tr(\psi)$ lies in the maximal ideal of $\Hom_{\F_p [C_p]} (\rhobar,\rhobar)$.

  Finally, the equivalence
  \[\Hom_{\F_p} (\rhobar, \rhobar) \cong \triv\{\mathrm{id}_{\rhobar}\} \oplus \mathrm{free},\]
  shows that the maximal ideal is equal to the image of $\Tr$ for dimension reasons.
\end{proof}

\begin{proof}[Proof of \Cref{thm:tmf-section6}]
  Recall that $\pi_4^{e} \tmf(2)$ is a free $\mathbb{Z}_{(3)}$-module with basis $\lambda_1$ and $\lambda_2$.
In light of \Cref{rmk:FpReduction}, it suffices to analyze the image of $v_1^{\mu_3}$ in its mod $3$ reduction, which is a free $\mathbb{F}_3$-module generated by the reductions of $\lambda_1$ and $\lambda_2$.
By combining \Cref{lem:transferlemma} with \Cref{thm:votFormula}, it suffices to show that a basis for this rank $2$ $\mathbb{F}_3$-module is given by the mod $3$ reduction of classes 
$$\frac{v_1-\gamma^2 v_1}{3},\frac{\gamma v_1 - v_1}{3} \in \pi^{e}_{4} \tmf(2).$$
Here, $v_1 \in \pi^{e}_{4} \tmf(2)$ refers to the class of \Cref{extractclassicalv1}, which depends on the chosen $\mu_3$-orientation.  By combining \Cref{VOTV1invariance} and \Cref{tmf(2)formulae}, we may as well set $v_1$ to be $-\lambda_1-\lambda_2$.  Using the formulas of \cite[Lemma 7.3]{Vesnatmf2} (cf. \Cref{tmf(2)calc}), we calculate
$$\frac{v_1-\gamma^2 v_1}{3} \equiv -\lambda_2 \mod 3, \text{ and}$$
$$\frac{\gamma v_1 - v_1}{3} \equiv \lambda_1 \mod 3.$$
These clearly generate all of $\pi_4^{e} \tmf(2)$ modulo $3$, as desired.
\end{proof}

\begin{proof}[Proof of \Cref{thm:E-section6}]
By arguments analogous to those in the previous proof, it suffices to check that
$$\frac{v_1-\gamma^{p-1} v_1}{p}, \frac{\gamma v_1 - v_1}{p}, \cdots, \frac{\gamma^{p-2} v_1 - \gamma^{p-3} v_1}{p} \in \pi^{e}_{2p-2} E_{p-1}$$
  reduce to generators of the degree $2p-2$ component of $\pi_*(E_{p-1} )/(p,\mathfrak{m}^2)$.  By \Cref{VOTV1invariance}, we may assume that $\frac{\gamma v_1 - v_1}{p}$ in $\pi^{e}_{2p-2} E_{p-1}$ is the element $v$ defined in \Cref{sec:Ethy}.   Under this assumption, the $p-1$ classes of interest become $v$ and its translates under the $C_p$ action on $\pi^{e}_{2p-2} E_{p-1}$. As noted in \Cref{rmk:Ep-1-forward}, these span $\pi_{2p-2} ^e E_{p-1}/(p,\mathfrak{m}^2)$.
\end{proof}

\bibliographystyle{alpha}
%\nocite{*}
\bibliography{bibliography}

\end{document}